\renewcommand{\leq}{\leqslant}
\renewcommand{\geq}{\geqslant}
\numberwithin{equation}{section}
\numberwithin{figure}{section} 
\theoremstyle{plain}
\newtheorem{thm}{Theorem}[section]
\newtheorem{cor}[thm]{Corollary}
\newtheorem{lem}[thm]{Lemma} 
\theoremstyle{definition}
\newtheorem{rem}{Remark}
\begin{document}
\title{On the spaces of $\lambda-$ convergent and bounded series}
\author{Meltem Kaya and Hasan Furkan$^*$}
\subjclass[2000]{40C05,40H05,46A45.}
\keywords{$\lambda$ spaces,  matrix domain of a sequence space, $\alpha,\beta$~and $\gamma$ duals, matrix mappings, the series spaces
$cs$ and $bs$.}
\thanks{*Corresponding author}
\address[M. Kaya]{Kahramanmara\c s S\"ut\c c\"u \.Imam  \"Un\.ivers\.ites\.i, Fen B\.il\.imler\.i Enst\.it\"us\"u
, 46100--Kahramanmara\c s, T\"urkiye}
\email[M. Kaya]{meltemkaya55@hotmail.com}
\address[H. Furkan]{Kahramanmara\c s S\"ut\c c\"u \.Imam  \"Un\.ivers\.ites\.i, Fen Edebiyat Fak\"ultes\.i, 46100--Kahra-manmara\c s, T\"urkiye}
\email[H. Furkan]{hasanfurkan@ksu.edu.tr, hasanfurkan@hotmail.com}

\begin{abstract}
The main purpose of this study is to introduce the  spaces $cs^{\lambda},~cs_0^{\lambda}$ and $bs^{\lambda}$ which are $BK-$spaces of non-absolute type. We prove  that these spaces are linearly isomorphic to the spaces $cs, cs_0$ and $bs$, respectively and derive some inclusion relations. Additionally, their Schauder bases have been constructed and the $\alpha-,\beta-$  and $\gamma-$ duals  of these spaces  have been computed.  Finally, we characterize some matrix classes from the spaces $cs^{\lambda},~cs_0^{\lambda}$ and $bs^{\lambda}$ to spaces $\ell_p,~ c$ and $c_0$, where $1\leq p\leq\infty$ .
\end{abstract}
\maketitle
\section{Preliminaries, Background  and Notation}
By a \emph{sequence space}, we understand a linear subspace of the
space $w=\mathbb{C}^\mathbb{N}$, where
$\mathbb{N}=\{0,1,2,\dots\}$. A sequence spaces $E$ with a linear topology is called a $K-$ spaces provided
each of the maps $p_i: E\to\mathbb{C}$ defined $p_i(x)=x_i$ is continuous for all $i\in\mathbb{N}$, where $\mathbb{C}$ denotes the complex field
and $\mathbb{N}=\{0,1,2,\dots\}$. A $K-$ space is called an $FK-$ space provided $E$ is a complete linear metric space. An $FK-$ spaces whose topology
is normable is called a $BK-$ space (see \cite[pages 272-273]{kuldary}) which
contains $\phi$, the set of all finitely non--zero sequences.
 We write $\ell_\infty $, $c$ and $c_0$
for the spaces of all bounded, convergent and null sequences, respectively.
Also by $\ell_p$, we denote the space of all $p$--absolutely summable sequences, where $1\leq
p<\infty$. Moreover, we write $bs,~cs$ and $cs_0$ for the sequences spaces
of all bounded, convergent and null series, respectively.

Let $\mu$ and $\nu$ be two sequence spaces, and let
$A=(a_{nk})$ be an infinite matrix of complex numbers $a_{nk}$,
where $n,k\in\mathbb{N}$. Then we say that $A$ defines a matrix transformation from $\mu$ into
$\nu$, and we denote it by writing $A:\mu\to \nu$  if for every sequence $x=(x_k)\in\mu$, the sequence $Ax=\{(Ax)\}$, the $A$- transform of $x$,
is in $\nu$, where
\begin{eqnarray}\label{A-trans}
(Ax)_{n}:=\sum_k a_{nk}x_k~~,~ (n\in \mathbb{N},~~x\in
D_{00}(A)),
\end{eqnarray}
and by $D_{00}(A)$ denotes the subspace of $w$ consisting of $x\in w$
for which the sum exists as a finite sum.  For simplicity in
notation, here and in what follows, the summation without limits
runs from $0$ to $\infty$ and we shall use the convention that any term with a negative subscript is equal to naught, for example, $\lambda_{-1}=0$ and $x_{-1}=0$.

By $(\mu:\nu)$, we denote the class of all matrices $A$  such that $A:\mu\to \nu$. Thus $A\in(\mu:\nu)$ if and only if the series on
the right side of (\ref{A-trans}) convergens for each $n\in\mathbb{N}$ and each $x\in\mu$ and we have $Ax=\{(Ax)_n\}_{n\in\mathbb{N}}\in\nu$
for all $x\in\mu$ . For an arbitrary sequence space $\mu$, the \emph{matrix domain} $\mu_A$ of an infinite matrix $A$ in
$\mu$ is defined by
 \begin{eqnarray}\label{Matrix domain}
\mu_A:=\{x\in w: Ax\in\mu\},
 \end{eqnarray}
which is a sequence space. If A is triangle, then one can easily observe that the normed sequence spaces $\mu_A$ and $\mu$ are norm isomorphic, i.e., $\mu_A\cong \mu .$ If $\mu$ is a sequence space, then the continuous dual $\mu_{A}^{*}$ of the space $\mu_{A}$ is defined by
\begin{equation*}
    \mu_{A}^{*}:=\{f : f = g\circ A,~~ g \in \mu^{*}\}.
\end{equation*}
We denote the collection of all finite subsets of $\mathbb{N}$ by  $\mathcal{F} $. Also, we will write $e^{(k)}$ for the sequence whose only non-zero term is  $1$ in the $k^{th}$ place for each $k\in \mathbb{N}$. Throughout this paper, let $\lambda=(\lambda_k)$ be a strictly increasing sequence of positive real tending
to infinity; that is
\begin{eqnarray*}
 0<\lambda_0<\lambda_1<\lambda_2<...,~~~~~~~~~\lim_{k\to\infty}\lambda_k=\infty.
 \end{eqnarray*}
We define the matrix $\Lambda=(\lambda_{nk})$ of weighted mean relative to the sequence $\lambda$  by
\begin{eqnarray*}
   \lambda_{nk}=\left\{\begin{array}{ll}
    \frac{\lambda_k-\lambda_{k-1}}{\lambda_n}, & ~~0\leq k\leq n,\\
    0, & ~~k>n,\\
\end{array}
\right.
\end{eqnarray*}
 for all $k,~n\in\mathbb{N}$. With a direct calculation we derive the equality
 \begin{equation}\label{Lamda trans}
    \Lambda_{n}(x)=\frac{1}{\lambda_{n}}\sum_{k=0}^{n}(\lambda_{k}-\lambda_{k-1})x_{k};~~~(n\in\mathbb{N}).
 \end{equation}
 It is easy to show that the matrix $\Lambda$ is regular and is reduced, in the special case
 $\lambda_k=k+1$ for all $k\in\mathbb{N}$ to the matrix $C_1$ of Ces\`{a}ro  means of order one. Introducing the concept
 of $\Lambda-$ \emph{strong convergence}, several results on $\Lambda-$ strong convergence of numerical sequences and Fourier series were given by M\'{o}ricz \cite{Moriz}. Since we have
\begin{eqnarray*}
 Q_n=\sum_{k=0}^{n}q_k=\lambda_n,~~~~~~~~~~~~r_{nk}=\frac{q_k}{Q_n}=\frac{\lambda_k-\lambda_{k-1}}{\lambda_n}=\lambda_{nk}
 \end{eqnarray*}
in the special case $q_k=\lambda_k-\lambda_{k-1}$ for all $k\in\mathbb{N}$, the matrix $\Lambda$ is also reduced to the Riesz means $R^q=(r_{nk})$ with respect to the sequence $q=(q_k)$.

We summarize the knowledge in the existing literature  concerning
with the $\lambda$- matrices domain over some sequence spaces.
Mursaleen and Noman \cite{mn01,mn03,mn04I,mn04II} introduced the spaces $\ell^{\lambda}_{\infty}$, $c^{\lambda}$, $c^{\lambda}_{0}$ and $\ell^{\lambda}_{p}$ of lambda-bounded, lambda-convergent, lambda-null and lambda-absolutely $p-$summable sequences and gave the inclusion relations between these spaces and the classical sequence spaces $\ell_{\infty}$, $c$ and $c_{0}$. Later, Mursaleen and Noman \cite{mn02} investigated the difference spaces $c_{0}^{\lambda}(\Delta)$ and $c^{\lambda}(\Delta)$ obtained from the spaces $c^{\lambda}_{0}$ and $c^{\lambda}$. Recently, on paranormed $\lambda-$ sequence spaces of non-absolute type has been studied by Karakaya, Noman and Polat \cite{vatan}. More recently, S\"onmez and Ba\c sar \cite{acfb} introduce the difference sequence spaces $c_{0}^{\lambda}(B)$ and $c^{\lambda}(B)$, which are the generalization of the spaces $c_{0}^{\lambda}(\Delta)$ and $c^{\lambda}(\Delta)$.  Quite recently, on some new sequence spaces of non-absolute type and matrix transformations has been studied by Ganie and Sheikh \cite{hamit}. Same authors has been studied on spaces of $\lambda-$ convergent sequences and almost convergence \cite{hamit1}. Also, on the fine  spectrum of the operator defined by lambda matrix over the spaces of null and convergent sequences has been studied by Ye\c silkayagil and Ba\c sar \cite{myfb}.

In this work, our purpose is to construct sequence spaces of $cs^{\lambda},~cs_0^{\lambda}$  and $bs^{\lambda}$ by using a matrix domain over a normed space as done by \cite{mn01}.

We define the sequence $y=(y_{k})$, which will be frequently used, as the $\Lambda$- transform of a sequence $x=(x_{k})$, i.e., $y=\Lambda(x)$ and so we have
\begin{equation}\label{18}
   y_{k}:=\frac{1}{\lambda_{k}}\sum_{j=0}^{k}(\lambda_{j}-\lambda_{j-1})x_{j};~~~(k\in\mathbb{N}).
\end{equation}
Also, we say that a sequence $x=(x_k)\in w$ is $\lambda$-convergent if $\Lambda x\in c$. In particular, we say that $x$ is $\lambda$-null sequence if $\Lambda x\in c_0$ and we say that $x$ is $\lambda$-bounded if $\Lambda x\in \ell_{\infty}$.

\section{The sequence spaces $cs^{\lambda},~cs_0^{\lambda}$  and $bs^{\lambda}$}
In the present section, we introduce the sequence spaces $cs^{\lambda},~cs_0^{\lambda}$  and $bs^{\lambda}$ as the sets of all sequences whose $\Lambda$-transforms  are in the spaces $cs,~cs_0$  and $bs$, respectively, that is
\begin{eqnarray*}
  cs^{\lambda} &=& \left\{x=(x_k)\in
w:\lim_{m\to\infty}\sum_{n=0}^{m}\frac{1}{\lambda_{n}}\sum_{k=0}^{n}(\lambda_{k}-\lambda_{k-1})x_{k}~~exists\right\},\\
  cs_0^{\lambda} &=& \left\{x=(x_k)\in
w:\lim_{m\to\infty}\sum_{n=0}^{m}\frac{1}{\lambda_{n}}\sum_{k=0}^{n}(\lambda_{k}-\lambda_{k-1})x_{k}=0 \right\}
\end{eqnarray*}
and
\begin{eqnarray*}
bs^{\lambda}  &=& \left\{x=(x_k)\in
w:\sup_{m}\left|\sum_{n=0}^{m}\frac{1}{\lambda_{n}}\sum_{k=0}^{n}(\lambda_{k}-\lambda_{k-1})x_{k}\right|<\infty\right\}.
\end{eqnarray*}

  With the notation of \eqref{Matrix domain}, we can redefine the spaces $cs^{\lambda},~cs_0^{\lambda}$  and $bs^{\lambda}$ as the matrix domains of the triangle $\Lambda$ in the spaces $ cs, cs_0$ and $bs$  by
  \begin{equation}\label{matrix domain}
     cs^{\lambda}=(cs)_{\Lambda},~ cs_0^{\lambda}=(cs_0)_{\Lambda}~~ \textrm{and}~~ bs^{\lambda}=(bs)_{\Lambda}.
  \end{equation}

Then, it is immediate  by  \eqref{matrix domain} that the sets $  cs^{\lambda},~cs_0^{\lambda}$ \textrm{and} $bs^{\lambda}$ are linear spaces with coordinatewise addition and scalar multiplication, that is, $  cs^{\lambda},~cs_0^{\lambda}$ \textrm{and} $bs^{\lambda}$ are the sequence spaces consisting of all sequences which are $\lambda$-convergent, $\lambda$-null and $\lambda$-bounded series of type $\lambda$, respectively.

Now, we may begin with the following theorem which is essential in the text.
\begin{thm}\label{21}The sequence spaces $ cs^{\lambda},~cs_0^{\lambda}$ and $bs^{\lambda}$ are BK-spaces with the same norm
$ \|x\|_{{cs}^{\lambda}}=\|x\|_{{cs}_0^{\lambda}}=\|x\|_{{bs}^{\lambda}}$, that is,
\begin{equation*}
    \|x\|_{{bs}^{\lambda}}=\|\Lambda(x)\|_{bs}=\sup_{m}\left|\sum_{n=0}^{m}\Lambda_{n}(x)\right|<\infty.
\end{equation*}
\end{thm}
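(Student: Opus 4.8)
The plan is to invoke the standard principle that the matrix domain of a $BK$-space under a \emph{triangle} matrix is again a $BK$-space. Concretely, it is known (see \cite[Theorem 4.3.2]{kuldary}, or the argument in \cite{mn01}) that if $\mu$ is a $BK$-space with norm $\|\cdot\|_\mu$ and $A$ is a triangle, then the matrix domain $\mu_A$ is a $BK$-space with the norm $\|x\|_{\mu_A}=\|Ax\|_\mu$. Since $\Lambda$ is a triangle (its diagonal entries are $\lambda_{nn}=(\lambda_n-\lambda_{n-1})/\lambda_n\neq 0$ because $\lambda$ is strictly increasing, and it is lower triangular), and since $cs$, $cs_0$ and $bs$ are themselves $BK$-spaces under the sup-of-partial-sums norm $\|z\|_{bs}=\sup_m\big|\sum_{n=0}^{m}z_n\big|$, applying this principle to each of the three cases in \eqref{matrix domain} gives the result directly, with the common norm $\|x\|=\|\Lambda(x)\|_{bs}=\sup_m\big|\sum_{n=0}^{m}\Lambda_n(x)\big|$.

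First I would state and justify that $cs$, $cs_0$ and $bs$ are $BK$-spaces: this is classical, but for completeness one checks that the map $z\mapsto\sup_m|\sum_{n=0}^m z_n|$ is a norm, that the coordinate functionals $p_i$ are continuous under it (since $|z_i|\le |\sum_{n=0}^{i}z_n|+|\sum_{n=0}^{i-1}z_n|\le 2\|z\|_{bs}$), and that the space is complete — $cs$ and $cs_0$ being closed subspaces of $bs$. Next I would spell out the isometric embedding: the linear map $x\mapsto\Lambda(x)$ from $bs^\lambda$ into $bs$ is injective because $\Lambda$ is a triangle (so $\Lambda(x)=0$ forces $x=0$ by back-substitution), and by definition of the norm it is norm-preserving onto its image.

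The one genuine verification, and the step I expect to be the main (though still routine) obstacle, is completeness of $bs^\lambda$ under $\|\cdot\|_{bs^\lambda}$: given a Cauchy sequence $(x^{(j)})_j$ in $bs^\lambda$, the images $(\Lambda(x^{(j)}))_j$ form a Cauchy sequence in the complete space $bs$, hence converge to some $z\in bs$; one must then produce $x\in w$ with $\Lambda(x)=z$, which is possible precisely because $\Lambda$ is a triangle and can be inverted entrywise, and check that $x^{(j)}\to x$ in the $bs^\lambda$-norm. Coordinatewise continuity of the $p_i$ on $bs^\lambda$ then follows by composing the (finite, hence continuous) linear inversion formula expressing $x_i$ in terms of $\Lambda_0(x),\dots,\Lambda_i(x)$ with the continuity of the coordinate functionals on $bs$. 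Finally, since $cs^\lambda=(cs)_\Lambda$ and $cs_0^\lambda=(cs_0)_\Lambda$ are the $\Lambda$-preimages of the closed subspaces $cs$ and $cs_0$ of $bs$, they are closed subspaces of $bs^\lambda$ and hence are themselves $BK$-spaces under the same (restricted) norm, which establishes the equality $\|x\|_{cs^\lambda}=\|x\|_{cs_0^\lambda}=\|x\|_{bs^\lambda}$ claimed in the statement.
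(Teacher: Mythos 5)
Your proposal is correct and follows essentially the same route as the paper: the paper's proof simply notes that $cs$, $cs_0$ and $bs$ are $BK$-spaces under their natural norms, that $\Lambda$ is a triangle, and then invokes the standard matrix-domain result (Theorem 4.3.12 of Wilansky \cite{wilanski}) to conclude that $cs^{\lambda}$, $cs_0^{\lambda}$ and $bs^{\lambda}$ are $BK$-spaces with the norm $\|x\|=\|\Lambda(x)\|_{bs}$. The extra details you supply (completeness via the triangle inverse, closedness of $cs$ and $cs_0$ in $bs$) are exactly the content of that cited theorem, so the only cosmetic discrepancy is your attribution of it to \cite{kuldary} rather than to Wilansky.
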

\begin{proof} Since \eqref{matrix domain} holds and $ cs,~cs_0$ and $bs$ are BK- spaces with the respect to their natural norms and the matrix $\Lambda$ is a triangle, Theorem 4.3.12 of Wilansky \cite[page 63]{wilanski} gives the fact that $ cs^{\lambda},~cs_0^{\lambda}$ and $bs^{\lambda}$ are BK- spaces with the given norms. This completes the proof.
\end{proof}
\begin{rem}One can easily check that the absolute property does not hold on the spaces $ cs^{\lambda},~cs_0^{\lambda}$ and $bs^{\lambda}$, that is,
$\|x\|_{{cs}^{\lambda}}\neq\||x|\|_{{cs}^{\lambda}},~~\|x\|_{cs_0^{\lambda}}\neq\||x|\|_{cs_0^{\lambda}}$ and $\|x\|_{{bs}^{\lambda}}\neq\||x|\|_{{bs}^{\lambda}}$ for at least one sequence in the spaces  $cs^{\lambda},~ cs_0^{\lambda}$ and $bs^{\lambda}$, and this shows that $cs^{\lambda},~ cs_0^{\lambda}$ and $bs^{\lambda}$ are the sequence spaces of non-absolute type, where $|x|=(|x_{k}|)$.
\end{rem}

Now, we give the final theorem of this section.
\begin{thm}\label{izomorfizm} The sequence spaces $cs^{\lambda}, ~~cs_0^{\lambda}$ and $bs^{\lambda}$ of non-absolute type are isometrically isomorphic to the spaces $cs,~~ cs_0$ and $bs$, respectively, that is  $cs^{\lambda}\cong cs$,\\ $cs_0^{\lambda}\cong cs_0$ and $bs^{\lambda}\cong bs$.
\end{thm}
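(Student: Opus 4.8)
The plan is to exhibit, in each of the three cases, an explicit linear map onto the corresponding classical space and to check that it is a norm-preserving bijection. Consider the map $T:cs^{\lambda}\to cs$ (and, in exactly the same way, $T:cs_0^{\lambda}\to cs_0$ and $T:bs^{\lambda}\to bs$) defined by $T(x)=\Lambda(x)=y$, where $y=(y_k)$ is the sequence given in \eqref{18}. By the definitions recorded in \eqref{matrix domain}, $T$ maps into the asserted target space; it is linear because the matrix domain construction is linear and $\Lambda$ acts linearly on $w$; and it is norm-preserving, since by the definition of the norm in Theorem \ref{21} we have $\|x\|_{cs^{\lambda}}=\|\Lambda(x)\|_{cs}=\|T(x)\|_{cs}$ (and similarly for $cs_0$ and $bs$). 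What remains is to prove that $T$ is a bijection.

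Injectivity is immediate, because $\Lambda$ is a triangle and hence $T(x)=\Lambda(x)=0$ forces $x=0$. For surjectivity I would invert the triangle $\Lambda$ explicitly. Rewriting \eqref{18} as $\lambda_k y_k=\sum_{j=0}^{k}(\lambda_j-\lambda_{j-1})x_j$ and subtracting the corresponding relation for $k-1$ yields the telescoping identity $\lambda_k y_k-\lambda_{k-1}y_{k-1}=(\lambda_k-\lambda_{k-1})x_k$, whence
\begin{equation*}
 x_k=\frac{\lambda_k}{\lambda_k-\lambda_{k-1}}\,y_k-\frac{\lambda_{k-1}}{\lambda_k-\lambda_{k-1}}\,y_{k-1}\qquad(k\in\mathbb{N}),
\end{equation*}
with the usual convention $\lambda_{-1}=y_{-1}=0$. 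Given any $y$ in $cs$ (respectively $cs_0$, $bs$), define $x=(x_k)$ by this formula; substituting back into \eqref{18} shows that $\Lambda(x)=y$, so $x\in cs^{\lambda}$ (respectively $cs_0^{\lambda}$, $bs^{\lambda}$) and $T(x)=y$. Thus $T$ is onto, and the three isometric isomorphisms follow.

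The only substantive step is the surjectivity argument: one must be sure that the candidate pre-image $x$ produced by the inversion formula genuinely lies in the domain space, i.e.\ that applying $\Lambda$ to it recovers $y$ exactly; this is precisely the telescoping verification indicated above, and since $\lambda_k-\lambda_{k-1}>0$ for every $k$ the coefficients are well defined, so no positivity or convergence obstruction arises. Everything else — linearity and the isometry identity $\|x\|_{cs^{\lambda}}=\|T(x)\|_{cs}$ — is formal and reads off directly from \eqref{matrix domain} and the norm stated in Theorem \ref{21}.
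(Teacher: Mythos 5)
Your proposal is correct and follows essentially the same route as the paper: the map $T=\Lambda$ is shown to be a linear, norm-preserving injection, and surjectivity is established by the explicit inverse formula $x_k=\frac{\lambda_k}{\lambda_k-\lambda_{k-1}}y_k-\frac{\lambda_{k-1}}{\lambda_k-\lambda_{k-1}}y_{k-1}$, which is precisely the paper's formula \eqref{22}, verified by the same telescoping computation. No gaps; the argument matches the published proof in substance and detail.
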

\begin{proof}To prove this, we should show the existence of an isometric isomorphism between the spaces $cs_0^{\lambda}$ and $cs_0$. Consider the transformation $T$ defined, with the notation of (\ref{18}), from $cs_0^{\lambda}$ to $cs_0$ by $x\mapsto y(\lambda)=Tx$. Then, $T(x)=y=\Lambda(x)\in cs_0$ for every $x\in cs_0^{\lambda}$ and the linearity of $T$ is clear. Also, it is trivial that $x=\theta$ whenever $Tx=\theta$ and hence $T$ is injective.
Furthermore, let $y=(y_{k})\in cs_0$ be given and define the sequence $x=(x_{k})$ by
\begin{equation}\label{22}
    x_{k}:=\sum_{j=k-1}^{k}(-1)^{k-j}\frac{\lambda_{j}}{\lambda_{k}-\lambda_{k-1}}y_{j};~~~~~~(k\in\mathbb{N}).
\end{equation}
Then, by using (\ref{Lamda trans}) and (\ref{22}), we have for every $n\in\mathbb{N}$ that
\begin{equation*}
\begin{split}
  \Lambda_{n}(x) &= \frac{1}{\lambda_{n}}\sum_{k=0}^{n}(\lambda_{k}-\lambda_{k-1})x_{k} \\
   &= \frac{1}{\lambda_{n}}\sum_{k=0}^{n}\sum_{j=k-1}^{k}(-1)^{k-j}\lambda_{j}y_{j} \\
   &= \frac{1}{\lambda_{n}}\sum_{k=0}^{n}(\lambda_{k}y_{k}-\lambda_{k-1}y_{k-1}) \\
   &=y_{n}.
 \end{split}
\end{equation*}
This shows that $\Lambda(x)=y$ and since $y\in cs_0$, we obtain that $\Lambda(x)\in cs_0$. Thus, we deduce that $x\in cs_0^{\lambda}$ and $Tx=y$. Hence $T$ is surjective. Moreover, one can easily see for every $x\in cs_0^{\lambda}$ that
\begin{eqnarray*}
  \|Tx\|_{cs_0} &=& \|y(\lambda)\|_{cs_0} = \|\lambda(x)\|_{cs_0} = \|x\|_{cs_0^{\lambda}}
  \end{eqnarray*}
  which means that $T$ is norm preserving. Therefore $T$ is isometry. Consequently $T$ is an isometric isomorphism which show that the spaces $cs_0^{\lambda}$ and $cs_0$ are  isometrically isomorphic.

  It is clear that if the spaces  $cs_0^{\lambda}$ and $cs_0$ are replaced by the respective one of the spaces $cs^{\lambda}$ and $cs$ or $bs^{\lambda}$ and $bs$, then we obtain the fact that $cs^{\lambda}\cong cs$ and $bs^{\lambda}\cong bs$. This completes the proof.
\end{proof}
\section{The inclusion Relations}
In the present section, we establish some inclusion relations concerning with the  spaces $cs^{\lambda},~cs_0^{\lambda}$  and $bs^{\lambda}$. We may begin with the following lemma.
\begin{lem} For any sequence $x=(x_{k})\in w$, the equality
\begin{equation}\label {31}
    S_{n}(x)=x_{n}-\Lambda_{n}(x);~~~~~~(n\in \mathbb{N})
\end{equation}
holds, where $S(x)=\{S_{n}(x)\}$ is the sequence defined by
$$S_{0}(x)=0 ~~~~~~~\textrm{and}~~~~~~~~ S_{n}(x)=\frac{1}{\lambda_{n}}\sum_{k=1}^{n}\lambda_{k-1}(x_{k}-x_{k-1});~~~~(n\geq1).$$
\end{lem}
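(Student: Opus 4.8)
The plan is to verify the identity by a direct Abel-type (summation-by-parts) computation, using throughout the standing convention $\lambda_{-1}=x_{-1}=0$, which will also let the case $n=0$ fall out automatically. First I would dispose of the trivial case: for $n=0$ we have $\Lambda_{0}(x)=\frac{1}{\lambda_{0}}(\lambda_{0}-\lambda_{-1})x_{0}=x_{0}$, so $x_{0}-\Lambda_{0}(x)=0=S_{0}(x)$, which matches the stated value.

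For $n\geq 1$, I would begin from
\[
x_{n}-\Lambda_{n}(x)=\frac{1}{\lambda_{n}}\Big[\lambda_{n}x_{n}-\sum_{k=0}^{n}(\lambda_{k}-\lambda_{k-1})x_{k}\Big],
\]
which is legitimate because $\sum_{k=0}^{n}(\lambda_{k}-\lambda_{k-1})=\lambda_{n}$ by telescoping. The key step is to write the leading term itself as a telescoping sum, $\lambda_{n}x_{n}=\sum_{k=0}^{n}\big(\lambda_{k}x_{k}-\lambda_{k-1}x_{k-1}\big)$, and then combine the two sums termwise:
\[
\lambda_{n}x_{n}-\sum_{k=0}^{n}(\lambda_{k}-\lambda_{k-1})x_{k}
=\sum_{k=0}^{n}\big[\lambda_{k}x_{k}-\lambda_{k-1}x_{k-1}-(\lambda_{k}-\lambda_{k-1})x_{k}\big]
=\sum_{k=0}^{n}\lambda_{k-1}(x_{k}-x_{k-1}).
\]
Since the $k=0$ summand is $\lambda_{-1}(x_{0}-x_{-1})=0$, the sum reduces to $\sum_{k=1}^{n}\lambda_{k-1}(x_{k}-x_{k-1})$; dividing by $\lambda_{n}$ gives precisely $S_{n}(x)$, as required.

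I do not expect any genuine obstacle here; the proof is a short algebraic manipulation. The only point needing a little care is the bookkeeping of the index shift and the use of the convention $\lambda_{-1}=0$, which is exactly what makes both telescoping identities valid and forces the $k=0$ term to vanish. This identity \eqref{31} is then the technical workhorse for the subsequent inclusion relations, since it rewrites $x-\Lambda(x)$ in a form to which standard summation-by-parts estimates can be applied.
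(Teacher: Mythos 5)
Your computation is correct: the telescoping identity $\lambda_{n}x_{n}=\sum_{k=0}^{n}(\lambda_{k}x_{k}-\lambda_{k-1}x_{k-1})$ combined with $\sum_{k=0}^{n}(\lambda_{k}-\lambda_{k-1})=\lambda_{n}$ and the convention $\lambda_{-1}=x_{-1}=0$ yields exactly $x_{n}-\Lambda_{n}(x)=\frac{1}{\lambda_{n}}\sum_{k=1}^{n}\lambda_{k-1}(x_{k}-x_{k-1})=S_{n}(x)$, and the $n=0$ case checks out since $\Lambda_{0}(x)=x_{0}$. The paper itself states this lemma without proof (it is the analogue of a lemma of Mursaleen and Noman), and your summation-by-parts argument is precisely the standard verification one would supply, so there is nothing to add.
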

\begin{thm}The inclusions $cs_0^{\lambda}\subset cs^{\lambda}\subset bs^{\lambda}$ strictly hold.
\end{thm}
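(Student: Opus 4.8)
The plan is to derive the inclusions from the corresponding classical inclusions $cs_0\subset cs\subset bs$ together with the representation \eqref{matrix domain} of our three spaces as matrix domains of one and the same triangle $\Lambda$. Concretely, if $x\in cs_0^{\lambda}$ then $\Lambda(x)\in cs_0\subset cs$, so $\Lambda(x)\in cs$ and hence $x\in cs^{\lambda}$; in the same way $cs\subset bs$ yields $cs^{\lambda}\subset bs^{\lambda}$. This settles the inclusions themselves, and the real work lies in showing that they are proper.

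For strictness the key tool is the computation already carried out in the proof of Theorem~\ref{izomorfizm}: for an \emph{arbitrary} sequence $y=(y_k)\in w$, the sequence $x=(x_k)$ defined by \eqref{22} satisfies $\Lambda_n(x)=y_n$ for every $n\in\mathbb{N}$, that is, $\Lambda(x)=y$. Thus, whenever we can exhibit a $y$ lying in one of the classical spaces but not in the smaller one, the sequence $x$ produced from it by \eqref{22} witnesses the strictness of the corresponding $\lambda$-inclusion. In other words, it suffices to separate the model spaces $cs_0$, $cs$, $bs$ and to pull the separating examples back through \eqref{22}.

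To see that $cs_0^{\lambda}\subsetneq cs^{\lambda}$, I would take $y=e^{(0)}=(1,0,0,\dots)$, so that $\sum_k y_k=1\neq 0$ and hence $y\in cs\setminus cs_0$. Letting $x$ be given by \eqref{22} (with the standing convention $\lambda_{-1}=y_{-1}=0$ at the index $k=0$), we get $\Lambda(x)=y\in cs$, so $x\in cs^{\lambda}$, while $\Lambda(x)=y\notin cs_0$ forces $x\notin cs_0^{\lambda}$. Similarly, to see that $cs^{\lambda}\subsetneq bs^{\lambda}$, I would take $y=\big((-1)^k\big)_{k\in\mathbb{N}}$, whose partial sums alternate between $1$ and $0$ and are therefore bounded but not convergent, so $y\in bs\setminus cs$; the sequence $x$ obtained from this $y$ via \eqref{22} then satisfies $\Lambda(x)=y\in bs$ and $\Lambda(x)=y\notin cs$, i.e.\ $x\in bs^{\lambda}\setminus cs^{\lambda}$.

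I do not expect a serious obstacle here: once the inverse relation $\Lambda(x)=y$ from the proof of Theorem~\ref{izomorfizm} is invoked, everything reduces to the elementary fact that $cs_0\subsetneq cs\subsetneq bs$. The only point deserving a moment's care is the boundary index $k=0$ in \eqref{22}, which is handled by the convention that terms with negative subscripts vanish; alternatively, one may phrase the entire argument through the isometric isomorphism $T$ of Theorem~\ref{izomorfizm}, under which the strict inclusions of the model spaces transport verbatim to the $\lambda$-spaces.
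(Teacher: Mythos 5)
Your proposal is correct and is essentially the paper's own argument: the paper's witnesses are exactly the \eqref{22}-pullbacks of $y_n=\tfrac{1}{(n+2)^2}\in cs\setminus cs_0$ and $y_n=(-1)^n\in bs\setminus cs$, with the identity $\Lambda(x)=y$ verified directly for those particular sequences rather than quoted from the surjectivity computation in Theorem~\ref{izomorfizm}. Your choice of $e^{(0)}$ in place of $\bigl(\tfrac{1}{(n+2)^2}\bigr)$ and your appeal to the general inverse relation are harmless variants of the same idea.
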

\begin{proof}It is obvious that the inclusions $cs_0^{\lambda}\subset cs^{\lambda}\subset bs^{\lambda}$ hold.

 Let us consider the sequence $x=(x_k)$ defined by
$$x_k=\frac{\lambda_{k}\frac{1}{(k+2)^2}-\lambda_{k-1}\frac{1}{(k+1)^2}}{\lambda_{k}-\lambda_{k-1}};~~(k\in \mathbb{N}).$$
In the present case, we obtain for every $n\in \mathbb{N}$ that equality
$$ \Lambda_{n}(x)=\frac{1}{\lambda_{n}}\sum_{k=0}^{n}\left(\lambda_{k}\frac{1}{(k+2)^2}-\lambda_{k-1}\frac{1}{(k+1)^2}\right)=\frac{1}{(n+2)^2}$$
which shows that $\Lambda(x)\in cs\backslash cs_0.$ Thus, the sequence $x$ is in $cs^{\lambda}$ but not in $cs_{0}^{\lambda}.$ Hence $cs_0^{\lambda}\subset cs^{\lambda}$ is a strict inclusion.

 To show that $cs^{\lambda}\subset bs^{\lambda}$ inclusion is strict, we define the sequence $y=(y_{k})$ by
$$y_{k}=(-1)^{k}\left(\frac{\lambda_{k}+\lambda_{k-1}}{\lambda_{k}-\lambda_{k-1}}\right);~~~~~(k\in\mathbb{N}).$$
Then, we have for every $n\in \mathbb{N}$ that
$$\sum_{n=0}^{m}\Lambda_{n}(y)=\sum_{n=0}^{m}\frac{1}{\lambda_{n}}\sum_{k=0}^{n}(-1)^{k}(\lambda_{k}+\lambda_{k-1})= \sum_{n=0}^{m}(-1)^{n}.$$
This shows $\Lambda(y)\in bs\backslash cs$. Thus, the sequence $y$ is in $bs^{\lambda}$ but not in $cs^{\lambda}$ and hence  $cs^{\lambda}\subset bs^{\lambda}$ is a strict inclusion. This concludes the proof.
\end{proof}
\begin{lem}\cite[Theorem 4.1.]{mn01}The inclusions $c_{0}^{\lambda}\subset c^{\lambda}\subset \ell_{\infty}^{\lambda}$ strictly hold.
\end{lem}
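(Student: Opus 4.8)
The plan is to deduce the two (non-strict) inclusions from monotonicity of the matrix domain and then to settle strictness by exhibiting one explicit witness at each step. For the first part, since $\Lambda$ is a fixed triangle and $c_{0}\subset c\subset\ell_{\infty}$, the definition \eqref{Matrix domain} gives immediately that $x\in c_{0}^{\lambda}$ implies $\Lambda(x)\in c_{0}\subset c$, hence $x\in c^{\lambda}$, and likewise $c^{\lambda}\subset\ell_{\infty}^{\lambda}$; in general $\mu\subset\nu$ forces $\mu_{\Lambda}\subset\nu_{\Lambda}$. So only properness requires work.

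For $c_{0}^{\lambda}\subsetneq c^{\lambda}$, I would take the constant sequence $e=(1,1,1,\dots)$. Using the convention $\lambda_{-1}=0$,
\begin{equation*}
\Lambda_{n}(e)=\frac{1}{\lambda_{n}}\sum_{k=0}^{n}(\lambda_{k}-\lambda_{k-1})=\frac{\lambda_{n}-\lambda_{-1}}{\lambda_{n}}=1\qquad(n\in\mathbb{N}),
\end{equation*}
so $\Lambda(e)=e\in c\setminus c_{0}$, whence $e\in c^{\lambda}\setminus c_{0}^{\lambda}$ and the inclusion is proper.

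For $c^{\lambda}\subsetneq\ell_{\infty}^{\lambda}$, I would invert $\Lambda$: prescribing the target $z=((-1)^{k})\in\ell_{\infty}\setminus c$ and reading the inverse formula \eqref{22} with $z$ in place of $y$ yields
\begin{equation*}
y_{k}=(-1)^{k}\,\frac{\lambda_{k}+\lambda_{k-1}}{\lambda_{k}-\lambda_{k-1}}\qquad(k\in\mathbb{N}),
\end{equation*}
which is exactly the sequence already used above to show that $cs^{\lambda}\subset bs^{\lambda}$ is strict. Setting $a_{k}=(-1)^{k}\lambda_{k}$ one has $(\lambda_{k}-\lambda_{k-1})y_{k}=(-1)^{k}(\lambda_{k}+\lambda_{k-1})=a_{k}-a_{k-1}$, so the sum telescopes:
\begin{equation*}
\Lambda_{n}(y)=\frac{1}{\lambda_{n}}\sum_{k=0}^{n}(a_{k}-a_{k-1})=\frac{a_{n}-a_{-1}}{\lambda_{n}}=\frac{(-1)^{n}\lambda_{n}}{\lambda_{n}}=(-1)^{n}\qquad(n\in\mathbb{N}).
\end{equation*}
Hence $\Lambda(y)=((-1)^{n})\in\ell_{\infty}\setminus c$, so $y\in\ell_{\infty}^{\lambda}\setminus c^{\lambda}$, completing the argument.

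The computations are short, and the only point needing a little care — effectively the ``hard'' part here — is keeping track of the boundary term in the telescoping sum for $\Lambda_{n}(y)$, i.e.\ the correct use of $\lambda_{-1}=0$ (equivalently $a_{-1}=0$); everything else is immediate from monotonicity of the matrix domain and the two displays above. Since this statement is precisely \cite[Theorem 4.1.]{mn01}, it could also simply be quoted, but the self-contained verification above is no longer.
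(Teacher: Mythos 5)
Your proposal is correct, and it actually does more than the paper does at this point: the paper offers no proof of this lemma at all, simply quoting it as Theorem 4.1 of Mursaleen and Noman \cite{mn01}, so there is no internal argument to compare against. Your self-contained verification is sound: the non-strict inclusions follow, as you say, from the monotonicity of the matrix domain ($\mu\subset\nu$ implies $\mu_{\Lambda}\subset\nu_{\Lambda}$ by \eqref{Matrix domain}); the computation $\Lambda_{n}(e)=1$ settles $c_{0}^{\lambda}\subsetneq c^{\lambda}$; and your telescoping with $a_{k}=(-1)^{k}\lambda_{k}$, $a_{-1}=0$ (the paper's convention $\lambda_{-1}=0$) correctly gives $\Lambda_{n}(y)=(-1)^{n}$, so $y\in\ell_{\infty}^{\lambda}\setminus c^{\lambda}$. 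It is worth noting that your two witnesses are exactly the sequences this paper itself deploys for the series analogues: the alternating sequence $y_{k}=(-1)^{k}(\lambda_{k}+\lambda_{k-1})/(\lambda_{k}-\lambda_{k-1})$ appears in the proof that $cs^{\lambda}\subset bs^{\lambda}$ is strict, and $e$ in the proof that $bs^{\lambda}\subset\ell_{\infty}^{\lambda}$ is strict, obtained in the same way by pushing a target sequence through the inverse formula \eqref{22}. So your argument is a faithful, elementary reconstruction in the spirit of the surrounding proofs; what the paper's citation buys is brevity, while your version buys self-containedness at essentially no extra cost.
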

\begin{thm}The inclusions $cs^{\lambda}\subset c_0^{\lambda}$ and $bs^{\lambda}\subset \ell_{\infty}^{\lambda}$ strictly hold.
\end{thm}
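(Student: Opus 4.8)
The plan is to prove both inclusions in the same spirit as the earlier inclusion results: first deduce them from the matrix-domain description \eqref{matrix domain} together with the trivial inclusions $cs\subset c_0$ and $bs\subset\ell_\infty$ for ordinary sequences, and then establish strictness by producing one explicit witness in each case.

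For the soft inclusions, recall that for any sequence space $\mu$ we have $\mu_\Lambda=\{x\in w:\Lambda(x)\in\mu\}$, so $\mu\subset\nu$ forces $\mu_\Lambda\subset\nu_\Lambda$ at once. Since the general term of a convergent series tends to $0$ we have $cs\subset c_0$, hence $cs^{\lambda}=(cs)_\Lambda\subset(c_0)_\Lambda=c_0^{\lambda}$; and since boundedness of the partial sums $s_m=\sum_{k=0}^{m}a_k$ forces boundedness of $a_m=s_m-s_{m-1}$ we have $bs\subset\ell_\infty$, hence $bs^{\lambda}=(bs)_\Lambda\subset(\ell_\infty)_\Lambda=\ell_\infty^{\lambda}$.

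For the strictness of $cs^{\lambda}\subset c_0^{\lambda}$ I would reuse the construction from the previous theorem with the exponent lowered by one, namely
\[
x_k=\frac{\lambda_k\frac{1}{k+2}-\lambda_{k-1}\frac{1}{k+1}}{\lambda_k-\lambda_{k-1}}\qquad(k\in\mathbb{N}).
\]
Then $(\lambda_k-\lambda_{k-1})x_k=\lambda_k\frac{1}{k+2}-\lambda_{k-1}\frac{1}{k+1}$ telescopes in \eqref{Lamda trans}, giving $\Lambda_n(x)=\frac{1}{n+2}$ for every $n\in\mathbb{N}$; so $\Lambda(x)\in c_0$ but $\Lambda(x)\notin cs$ because $\sum_n\frac{1}{n+2}$ diverges, i.e.\ $x\in c_0^{\lambda}\setminus cs^{\lambda}$. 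For the strictness of $bs^{\lambda}\subset\ell_\infty^{\lambda}$ the cleanest witness is $e=(1,1,1,\dots)$: using $\sum_{k=0}^{n}(\lambda_k-\lambda_{k-1})=\lambda_n$ we get $\Lambda_n(e)=1$ for all $n$, so $\Lambda(e)\in\ell_\infty$, while $\sum_{n=0}^{m}\Lambda_n(e)=m+1\to\infty$ shows $\Lambda(e)\notin bs$; hence $e\in\ell_\infty^{\lambda}\setminus bs^{\lambda}$.

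I do not expect any genuine obstacle: the only steps requiring verification are the two $\Lambda$-transforms, and each collapses to a one-line telescoping sum once the witness sequence is written down. The real content is simply choosing sequences whose $\Lambda$-transforms land in $c_0\setminus cs$ and in $\ell_\infty\setminus bs$ respectively; everything else is immediate from the monotonicity $\mu\subset\nu\Rightarrow\mu_\Lambda\subset\nu_\Lambda$ of the matrix domain and the elementary inclusions $cs\subset c_0$, $bs\subset\ell_\infty$.
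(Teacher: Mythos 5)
Your proposal is correct and follows essentially the same route as the paper: the soft inclusions come from $cs\subset c_0$, $bs\subset\ell_\infty$ together with monotonicity of the matrix domain, and strictness is shown by exhibiting witnesses whose $\Lambda$-transforms lie in $c_0\setminus cs$ and $\ell_\infty\setminus bs$ (the sequence $e$ for the second inclusion is exactly the paper's choice). The only difference is cosmetic: for the first strictness the paper takes $x_k=\tfrac{1}{k+1}$ and bounds $\Lambda_n(x)\geq\tfrac{1}{n+1}$, while you engineer a telescoping witness with $\Lambda_n(x)=\tfrac{1}{n+2}$ exactly; both are valid.
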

\begin{proof} It is clear that the inclusion $cs^{\lambda}\subset c_0^{\lambda}$  holds, since $x\in cs^{\lambda}$ implies $\Lambda(x)\in cs$ and hence $\Lambda(x)\in c_0$ which means that $x\in c_0^{\lambda}.$ Consider the sequence $x=(x_{k})$ defined by
$$x_{k}=\frac{1}{k+1};~~~~~~(k\in \mathbb{N}).$$
Then, $x\in c_0$ and hence $x\in c_0^{\lambda}$, since the inclusion $c_0\subset c_0^{\lambda}$ holds. On the other hand, we have for every $n\in \mathbb{N}$ that
\begin{eqnarray*}
\Lambda_{n}(x) &=& \frac{1}{\lambda_{n}}\sum_{k=0}^{n}\frac{\lambda_{k}-\lambda_{k-1}}{k+1} \\
 &\geq& \frac{1}{\lambda_{n}(n+1)}\sum_{k=0}^{n}(\lambda_{k}-\lambda_{k-1}) \\
 &=& \frac{1}{n+1}
\end{eqnarray*}
which shows that $\Lambda(x)\not\in cs$ and hence $x\not\in cs^{\lambda}.$ Thus, the sequence $x$ is in $c_0^\lambda$ but not in $cs^{\lambda}.$ Therefore, the inclusion $cs^{\lambda}\subset c_0^{\lambda}$ is strict.

Similarly, it is also trivial that the inclusion $bs^{\lambda}\subset \ell_{\infty}^{\lambda}$ holds. To show that this inclusion is strict, we define the sequence $y=(y_{k})$ by
$$y_{k}=e=(1, 1, 1,...)~;~~~~ {(k\in\mathbb{N})}.$$
In the present case, we have for every $n\in\mathbb{N}$ that
$$\Lambda_{n}(y)=\frac{1}{\lambda_{n}}\sum_{k=0}^{n}(\lambda_{k}-\lambda_{k-1})=1$$
which shows that $\Lambda(y)\in \ell_{\infty}\backslash bs$. Thus, the sequence $y$ is in $\ell_{\infty}^{\lambda}$ but not in $bs^{\lambda}$ and hence $bs^{\lambda}\subset \ell_{\infty}^{\lambda}$ is a strict inclusion. This completes the proof.

\end{proof}
\begin{thm} The inclusion $cs^{\lambda}\subset cs$ holds if and only if $S(x)\in cs$ for every sequence $x\in cs^{\lambda}.$
\end{thm}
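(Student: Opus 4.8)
The plan is to exploit the identity \eqref{31} from the preceding Lemma, namely $S_n(x)=x_n-\Lambda_n(x)$ for all $n\in\mathbb{N}$, which we rewrite as $x_n=\Lambda_n(x)+S_n(x)$. Summing over $n$ from $0$ to $m$ yields, for every $m\in\mathbb{N}$,
\begin{equation*}
\sum_{n=0}^{m}x_n=\sum_{n=0}^{m}\Lambda_n(x)+\sum_{n=0}^{m}S_n(x),
\end{equation*}
so the partial sums of the series $\sum x_n$ split as the sum of the partial sums of $\sum\Lambda_n(x)$ and of $\sum S_n(x)$. Everything then follows from the elementary fact that, in a linear space, the limit of a sum is the sum of the limits whenever both limits exist.

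First I would prove necessity (the ``only if'' direction). Assume $cs^{\lambda}\subset cs$ and take any $x\in cs^{\lambda}$. By the definition of $cs^{\lambda}$ we have $\Lambda(x)\in cs$, and by the assumed inclusion $x\in cs$; hence both $\sum_{n=0}^{m}x_n$ and $\sum_{n=0}^{m}\Lambda_n(x)$ converge as $m\to\infty$. From the displayed identity, $\sum_{n=0}^{m}S_n(x)=\sum_{n=0}^{m}x_n-\sum_{n=0}^{m}\Lambda_n(x)$ then converges as well, i.e. $S(x)\in cs$. Since $x\in cs^{\lambda}$ was arbitrary, this gives the claim.

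For sufficiency (the ``if'' direction), assume $S(x)\in cs$ for every $x\in cs^{\lambda}$, and fix such an $x$. Then $\Lambda(x)\in cs$ by definition and $S(x)\in cs$ by hypothesis, so both $\sum_{n=0}^{m}\Lambda_n(x)$ and $\sum_{n=0}^{m}S_n(x)$ converge as $m\to\infty$. The displayed identity then shows that $\sum_{n=0}^{m}x_n$ converges, i.e. $x\in cs$; as $x$ was an arbitrary element of $cs^{\lambda}$, we conclude $cs^{\lambda}\subset cs$.

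I do not anticipate any real obstacle here: the argument is a direct consequence of the pointwise relation \eqref{31} together with the linearity of convergence for series. The only point requiring a little care is the bookkeeping at $n=0$, where the conventions $\lambda_{-1}=0$ and $x_{-1}=0$ (so that $S_0(x)=0$ and $\Lambda_0(x)=x_0$) must be used to see that \eqref{31}, and hence the displayed splitting, is valid for every $n\in\mathbb{N}$, including $n=0$.
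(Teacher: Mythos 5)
Your argument is correct and follows essentially the same route as the paper: summing the identity \eqref{31} over $n=0,\dots,m$ and using that convergence of two of the three partial-sum sequences forces convergence of the third, in each direction. No gaps; the remark about the conventions $\lambda_{-1}=x_{-1}=0$ at $n=0$ is a fine (if minor) point of care.
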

\begin{proof}Suppose that the inclusion $cs^{\lambda}\subset cs$ holds, and take any $x=(x_{k})\in cs^{\lambda}$. Then $\Lambda x\in cs$ and $x\in cs$ by the hypothesis. Thus, we deduce from \eqref{31} that
\begin{equation}\label{33}
    \sum_{n=0}^{m}(x_{n}-\Lambda_{n}(x))=\sum_{n=0}^{m}x_{n}-\sum_{n=0}^{m}\Lambda_{n}(x)=\sum_{n=0}^{m}S_{n}(x).
\end{equation}
Hence, we obtain from \eqref{33} by letting $m\to \infty$ that
\begin{equation}\label{34}
    \lim_{m}\sum_{n=0}^{m}S_{n}(x)=\sum_{n}x_{n}-\sum_{n}\Lambda_{n}(x).
\end{equation}
As $(x_{n})\in cs$ and $(\Lambda_{n}(x))\in cs$, the right hand side of the equality \eqref {34} is convergent  as $m\to \infty $. Thereby, the series $\sum_{n=0}^{m}S_{n}(x)$ converges and so, $S(x)\in cs.$

Conversely, let $x\in cs^{\lambda}$ be given. Then, we have by the hypothesis that\\ $S(x)\in cs.$ Again, it follows by \eqref{33} that
$$\lim_{m}\sum_{n=0}^{m}x_{n}=\sum_{n}S_{n}(x)+\sum_{n}\Lambda_{n}(x)$$
which shows that $x\in cs$ since $\Lambda(x)\in cs$ and $S(x)\in cs.$ Hence, the inclusion $cs^{\lambda}\subset cs$ holds and this concludes the proof.
\end{proof}
\begin{thm} The inclusion $cs_0^{\lambda}\subset cs_0$ holds if and only if $S(x)\in cs_0$ for every sequence $x\in cs_0^{\lambda}.$
\end{thm}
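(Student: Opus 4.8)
The plan is to mirror, almost verbatim, the argument just given for the inclusion $cs^{\lambda}\subset cs$, with the single modification that convergence of a series is everywhere replaced by convergence of its partial sums to zero. The engine of the proof is again the pointwise identity $S_{n}(x)=x_{n}-\Lambda_{n}(x)$ supplied by \eqref{31}, which upon summing over $0\le n\le m$ gives
\begin{equation*}
\sum_{n=0}^{m}S_{n}(x)=\sum_{n=0}^{m}x_{n}-\sum_{n=0}^{m}\Lambda_{n}(x)\qquad(m\in\mathbb{N}).
\end{equation*}

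For the necessity, I would begin with an arbitrary $x\in cs_{0}^{\lambda}$. By definition $\Lambda(x)\in cs_{0}$, and the assumed inclusion $cs_{0}^{\lambda}\subset cs_{0}$ yields $x\in cs_{0}$; hence both partial-sum sequences $\bigl(\sum_{n=0}^{m}x_{n}\bigr)_{m}$ and $\bigl(\sum_{n=0}^{m}\Lambda_{n}(x)\bigr)_{m}$ tend to $0$ as $m\to\infty$. Letting $m\to\infty$ in the displayed identity then forces $\sum_{n=0}^{m}S_{n}(x)\to 0-0=0$, i.e. $S(x)\in cs_{0}$.

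For the sufficiency, I would take an arbitrary $x\in cs_{0}^{\lambda}$, so that $\Lambda(x)\in cs_{0}$, and use the hypothesis to obtain $S(x)\in cs_{0}$. Rearranging the same identity as $\sum_{n=0}^{m}x_{n}=\sum_{n=0}^{m}S_{n}(x)+\sum_{n=0}^{m}\Lambda_{n}(x)$ and passing to the limit $m\to\infty$, the right-hand side tends to $0+0=0$, whence $x\in cs_{0}$. As $x\in cs_{0}^{\lambda}$ was arbitrary, this establishes $cs_{0}^{\lambda}\subset cs_{0}$ and completes the equivalence.

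I do not expect any genuine obstacle: the whole statement is a straightforward linear manipulation of the identity \eqref{31} together with the linearity of the limit on $cs_{0}$. The only point needing a moment's attention — and the one place where the proof genuinely differs from the $cs$ case — is to invoke the \emph{correct} description of membership in $cs_{0}$, namely that the partial sums converge to $0$ rather than merely converge, so that the limits $0-0$ and $0+0$ come out as claimed in the two implications.
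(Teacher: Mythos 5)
Your proof is correct and is exactly the argument the paper intends: the paper's own proof simply says the result follows ``by analogy'' with the preceding theorem for $cs^{\lambda}\subset cs$, and you have written out that analogy explicitly, using the identity \eqref{31}, summation over $0\le n\le m$, and the limit $m\to\infty$ with convergence replaced by convergence to zero. No gap; your spelled-out version matches the paper's intended route.
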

\begin{proof} One can see by analogy that the inclusion  $cs_0^{\lambda}\subset cs_0$ also holds if and only if $S(x)\in cs_0$ for every sequence $x\in cs_0^{\lambda}.$ This completes the proof.
\end{proof}
\begin{thm} The inclusion $bs^{\lambda}\subset bs$ holds if and only if $S(x)\in bs$ for every sequence $x\in bs^{\lambda}.$
\end{thm}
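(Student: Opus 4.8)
The plan is to mimic the proofs of the two preceding theorems, the only change being that $bs$ is characterised by \emph{boundedness} of the partial sums rather than by their convergence or convergence to zero. The key ingredients are the identity $S_{n}(x)=x_{n}-\Lambda_{n}(x)$ from \eqref{31} and its summed form \eqref{33}, namely
$\sum_{n=0}^{m}S_{n}(x)=\sum_{n=0}^{m}x_{n}-\sum_{n=0}^{m}\Lambda_{n}(x)$, which holds for every $x\in w$ and every $m\in\mathbb{N}$. This identity lets one transfer uniform boundedness of partial sums between the three sequences $x$, $\Lambda(x)$ and $S(x)$ by a single application of the triangle inequality.

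For the necessity, I would assume $bs^{\lambda}\subset bs$ and fix an arbitrary $x=(x_{k})\in bs^{\lambda}$. Then $\Lambda(x)\in bs$ by the definition of $bs^{\lambda}$, and $x\in bs$ by the hypothesis. Applying \eqref{33} and taking absolute values yields, for every $m\in\mathbb{N}$,
$\bigl|\sum_{n=0}^{m}S_{n}(x)\bigr|\leq\bigl|\sum_{n=0}^{m}x_{n}\bigr|+\bigl|\sum_{n=0}^{m}\Lambda_{n}(x)\bigr|\leq\|x\|_{bs}+\|\Lambda(x)\|_{bs}$.
Taking the supremum over $m$ shows that $S(x)\in bs$, and since $x\in bs^{\lambda}$ was arbitrary this gives the claim.

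For the sufficiency, I would take an arbitrary $x\in bs^{\lambda}$, so that $\Lambda(x)\in bs$ and, by hypothesis, $S(x)\in bs$. Rewriting \eqref{33} as $\sum_{n=0}^{m}x_{n}=\sum_{n=0}^{m}S_{n}(x)+\sum_{n=0}^{m}\Lambda_{n}(x)$ and using the triangle inequality gives $\bigl|\sum_{n=0}^{m}x_{n}\bigr|\leq\|S(x)\|_{bs}+\|\Lambda(x)\|_{bs}<\infty$ uniformly in $m$, hence $x\in bs$. As $x\in bs^{\lambda}$ was arbitrary, $bs^{\lambda}\subset bs$, which completes the proof.

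I do not expect a genuine obstacle here: unlike the $cs$ and $cs_{0}$ cases there is no limit of a series to be identified, only a uniform bound on partial sums, so the triangle inequality for the $bs$-norm does all the work. The only point that calls for a little care is to phrase the estimates as suprema over $m$ rather than as limits, since the partial sums of a sequence in $bs$ need not converge.
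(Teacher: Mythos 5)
Your proposal is correct and follows essentially the same route as the paper: both arguments rest on the identity $S_{n}(x)=x_{n}-\Lambda_{n}(x)$ of \eqref{31}, summed over $n$, together with the triangle inequality for the $bs$-norm, applied once in each direction. No gap; your remark about working with suprema of partial sums rather than limits is exactly the point the paper's norm estimates encode.
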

\begin{proof}Suppose that the inclusion $bs^{\lambda}\subset bs$ holds, and take any $x=(x_{k})\in bs^{\lambda}$. Then, $x\in bs$ by the hypothesis. Thus, we obtain from equality \eqref{31}
$$\|S(x)\|_{bs}\leq \|x\|_{bs}+\|\Lambda(x)\|_{bs}=\|x\|_{bs}+\|x\|_{bs^{\lambda}}<\infty$$
which yields that $S(x)\in bs.$

Conversely, assume that  $S(x)\in bs$ for every $x\in bs^\lambda$. Again, we obtain from equality \eqref{31}
$$\|x\|_{bs}\leq \|S(x)\|_{bs}+\|\Lambda(x)\|_{bs}=\|S(x)\|_{bs}+\|x\|_{bs^{\lambda}}<\infty.$$
This shows that $x\in bs$. Hence, the inclusion $bs^{\lambda}\subset bs$ holds. This completes the proof.
\end{proof}

\section{The Basis for the Spaces $cs^{\lambda},~~ cs_0^{\lambda}$ and $bs^{\lambda}$}
In the present section, we give a sequence of the points of the spaces $cs^{\lambda}$ and $~cs_0^{\lambda}$  which forms a basis for these spaces.
 If a normed sequence space X contains a sequence $(b_n)$ with the property that for every $x\in X$ there is a unique sequence of scalars $(\alpha_n)$ such that
\begin{equation*}
\lim_{n\to\infty}\|x-(\alpha_0b_0+\alpha_1b_1+\ldots+\alpha_nb_n)\|=0
\end{equation*}
then $(b_n)$ is called a Schauder basis (or briefly basis) for $X$. The series $\sum_k \alpha_kb_k$ which has the sum $x$ is then called the expansion of $x$ with respect to $(b_n)$ and is written as $x=\sum_k \alpha_kb_k$.

Now, since the transformation $T$ defined from $cs_0^{\lambda}$ to $cs_0$ in the proof of Theorem \ref{izomorfizm} is an isomorphism. Therefore, we have the following theorem:
\begin{thm}\label{41}
Define the sequence  $e_{\lambda}^{(n)}=\{(e_{\lambda}^{(n)})_k\}$  for every fixed $k\in\mathbb{N}$ by
\begin{eqnarray*}
   \{(e_{\lambda}^{(n)})_k\}=\left\{\begin{array}{ll}
    (-1)^{k-n}\frac{\lambda_n}{\lambda_{k}-\lambda_{k-1}}&,  ~~n\leq k\leq n+1,\\
    0 &,  ~~\textrm{otherwise},\\
\end{array}
\right.
\end{eqnarray*}
 for all $k\in\mathbb{N}$.

 Then, we have\\
 a) The sequence $(e_{\lambda}^{(0)},~ e_{\lambda}^{(1)},\ldots)$ is a Schauder basis  for the spaces $cs^{\lambda}$ and  $cs_0^{\lambda}$ and every $x\in cs^{\lambda}~~ \textrm{or}~~cs_0^{\lambda}$ has a unique representation of the form
 \begin{equation}\label{43}
    x=\sum_{n=0}^{\infty}\Lambda_{n}(x)e_\lambda^{(n)}.
 \end{equation}
 b) $bs^{\lambda}$ has no Schauder basis.
\end{thm}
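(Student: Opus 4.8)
The plan is to prove the two parts separately, using the isometric isomorphism $T\colon cs_0^{\lambda}\to cs_0$ (and its analogue for $cs^{\lambda}$ and $cs$) established in Theorem \ref{izomorfizm}. For part (a), the key observation is that a $BK$-space isomorphism carries a Schauder basis to a Schauder basis: since $T$ is a linear homeomorphism (indeed an isometry), $(b_n)$ is a basis of $cs_0^{\lambda}$ if and only if $(Tb_n)$ is a basis of $cs_0$. So first I would identify the natural basis of $cs_0$: because $cs_0 = (c_0)_{\Sigma}$ where $\Sigma$ is the summation matrix, and $\Sigma$ is a triangle, the canonical unit vectors $e^{(n)}$ form a basis of $cs_0$ (equivalently, one uses the sequence $f^{(n)} = e^{(0)}+\cdots+e^{(n)}$ which is the $\Sigma$-preimage of $e^{(n)}$; in either description one checks the partial-sum convergence in the $cs_0$-norm directly). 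Then I would compute $T^{-1}e^{(n)}$ using the inversion formula \eqref{22}: setting $y = e^{(n)}$ in $x_k = \sum_{j=k-1}^{k}(-1)^{k-j}\frac{\lambda_j}{\lambda_k-\lambda_{k-1}}y_j$ gives exactly the sequence $e_{\lambda}^{(n)}$ defined in the statement (the only surviving terms are $j=k$ when $k=n$ and $j=k-1$ when $k=n+1$). Hence $(e_{\lambda}^{(n)})$ is the image under $T^{-1}$ of the basis of $cs_0$, so it is a basis of $cs_0^{\lambda}$; the coefficient of $x$ with respect to $e_{\lambda}^{(n)}$ is the $n$-th coordinate of $Tx = \Lambda(x)$, namely $\Lambda_n(x)$, which yields the representation \eqref{43}. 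The identical argument with $cs$ in place of $cs_0$ handles $cs^{\lambda}$, since the unit vectors also form a basis of $cs$.

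For part (b), the standard route is to invoke the classical fact that a $BK$-space with a Schauder basis is necessarily separable. Since $bs^{\lambda} \cong bs$ isometrically by Theorem \ref{izomorfizm}, and since an isometric isomorphism preserves separability, it suffices to show that $bs$ is not separable. This is the familiar observation that $bs$ contains an uncountable set of sequences whose pairwise $bs$-distances are bounded below: for each subset $A \subseteq \mathbb{N}$, let $x^{(A)}$ be the sequence with $x^{(A)}_k = (-1)^k$ for $k \in A$ and chosen so that the partial sums of $x^{(A)}$ stay bounded — concretely one takes the $bs$-analogue of the classical $\ell_\infty$ non-separability argument, exhibiting uncountably many elements of $bs$ that are at mutual distance $\geq 1$. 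Therefore $bs$ is non-separable, hence so is $bs^{\lambda}$, and consequently $bs^{\lambda}$ has no Schauder basis.

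The main obstacle I anticipate is purely a matter of care rather than of substance: in part (a) one must verify that the partial sums $\sum_{n=0}^{N}\Lambda_n(x)\,e_{\lambda}^{(n)}$ actually converge to $x$ in the $cs_0^{\lambda}$-norm (i.e. in the $cs_0$-norm after applying $\Lambda$), and that the representing coefficients are unique. Both follow formally from the fact that $T$ is a homeomorphism and $e^{(n)}$ is a basis of $cs_0$, but one should spell out that $\Lambda\big(\sum_{n=0}^{N}\Lambda_n(x)e_{\lambda}^{(n)}\big) = \sum_{n=0}^{N}\Lambda_n(x)e^{(n)}$, which is immediate since $\Lambda(e_{\lambda}^{(n)}) = e^{(n)}$ by the computation in the proof of Theorem \ref{izomorfizm}. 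For part (b), the only thing to be careful about is producing an honest uncountable $1$-separated family inside $bs$ (not merely inside $\ell_\infty$), but the sign-pattern construction indicated above does this directly.
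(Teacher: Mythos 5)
Your proposal is correct and, for part (a), takes essentially the same route as the paper: the paper likewise starts from the fact that the unit vectors $(e^{(n)})$ form a basis of $cs$ and $cs_0$, notes that $\Lambda(e_{\lambda}^{(n)})=e^{(n)}$ (equivalently $e_{\lambda}^{(n)}=\Lambda^{-1}e^{(n)}$, which is your computation from \eqref{22}), and transfers sectional convergence and uniqueness of the coefficients $\Lambda_n(x)$ through the isometry of Theorem \ref{izomorfizm}. Two remarks. First, a small slip in your parenthetical: the $\Sigma$-preimage of $e^{(n)}$ under the summation matrix is $e^{(n)}-e^{(n+1)}$, not $e^{(0)}+\cdots+e^{(n)}$; this is harmless, since what you actually use --- that the unit vectors themselves are a basis of $cs$ and $cs_0$ --- is true and is verified directly from the Cauchy criterion, $\bigl\|x-\sum_{n=0}^{m}x_ne^{(n)}\bigr\|_{bs}=\sup_{l>m}\bigl|\sum_{n=m+1}^{l}x_n\bigr|\to 0$. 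Second, for part (b) the paper merely cites Remark 2.2 of Malkowsky and Rako\v{c}evi\'{c}, whereas you supply the underlying argument: a $BK$-space with a Schauder basis is separable, $bs^{\lambda}\cong bs$ isometrically, and $bs$ is non-separable. Your sketched $1$-separated family needs the obvious repair, since the recipe $x^{(A)}_k=(-1)^k$ for $k\in A$ need not have bounded partial sums for arbitrary $A$; the clean choice is $x^{(A)}_k=\chi_A(k)-\chi_A(k-1)$ for $A\subseteq\mathbb{N}$ (the inverse summation transform of the characteristic sequence), which lies in $bs$ and satisfies $\|x^{(A)}-x^{(B)}\|_{bs}=\sup_m|\chi_A(m)-\chi_B(m)|=1$ whenever $A\neq B$. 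With that one-line adjustment your part (b) is a complete, self-contained proof, somewhat more explicit than the paper's citation.
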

\begin{proof} a) ([13, Theorem 2.3.]) It is clear that $e_{\lambda}^{(n)}$ is a basis for $cs^{\lambda}$  since $e^{(n)}$ is a basis for $cs$  and $\Lambda(e_{\lambda}^{(n)})=e^{(n)}.$ Let $x\in cs^{\lambda}$ be given. Then,  $y=\Lambda(x)\in cs $ and $$y^{[m]}=\sum_{n=0}^my_ne^{(n)}\to y~~(m\to\infty)$$ for a unique sequence $(y_{n})_{n=0}^{\infty}$ of scalars. Therefore, we obtain  that
$$\Lambda^{-1}(y^{[m]})=\sum_{n=0}^my_n\Lambda^{-1}(e^{(n)})=\sum_{n=0}^my_ne_\lambda^{(n)}.$$
Since $y_{n}=\Lambda_{n}(x)$, we can write as
$$x^{[m]}=\sum_{n=0}^m\lambda_n(x)e_\lambda^{(n)}.$$ Consequently,
$$\|x^{[m]}-x\|_{cs^{\lambda}}=\|\Lambda(x^{[m]}-x)\|_{cs}=\|\Lambda(x^{[m]})-\Lambda(x)\|_{cs}=\|y^{[m]}-y\|_{cs}\to 0~~(m\to\infty). $$
Thus, we deduce that $\lim_{m\rightarrow\infty}\|x^{[m]}-x\|=0$, which shows that $x\in cs^{\lambda}$ is represented as in \eqref{43}.\\

Finally, let us show the uniqueness of the representation \eqref{43} of $x\in cs^{\lambda}.$ For, suppose on the contrary that there exists another representation $ x=\sum_{n}\alpha_{n}e_\lambda^{(n)}.$ Since the linear transformation $T$ defined from $cs^{\lambda}$ to $cs$, in the proof of Theorem \ref{izomorfizm}, is continuous, we have
$$\Lambda_{k}(x)=\sum_{n}\alpha_{n}\Lambda_{k}(e_\lambda^{(n)})=\sum_{n}\alpha_{n}\delta_{kn}=\alpha_{k};~~~~(k\in\mathbb{N}).$$ Therefore, the representation \eqref{43} of $x\in cs^{\lambda}$ is unique.  It can be proved similarly for $cs_0^{\lambda}$. This completes the proof.

b)As a direct consequence of Remark 2.2. of  Malkowsky and  Rako\u{c}evi\'{c} \cite{malkowskiv}, $bs^{\lambda}$ has no Schauder basis.
\end{proof}

As a result, it easily follows from Theorem \ref{21} that $cs^{\lambda}$ and $cs_0^{\lambda}$ are the Banach spaces with their natural norms. Then by Theorem \ref{41} we obtain the the following corollary:

\begin{cor} The sequence spaces $cs^{\lambda}$ and $cs_0^{\lambda}$ of non-absolute type are separable.
\end{cor}
\section{The $\alpha-,~\beta-$  and $\gamma-$ duals  of the spaces $cs^{\lambda}, cs_0^{\lambda}$ and $bs^{\lambda}$ }
In this section, we state and prove the theorems determining the $\alpha-, \beta-$ and $\gamma-$
duals of the sequence spaces $cs^{\lambda}, cs_0^{\lambda}$  and $bs^{\lambda}$ of non-absolute type. For arbitrary sequence spaces $X$ and $Y$, the set $M(X,Y)$ defined by
\begin{equation}\label{dual}
M(X,Y)=\{a=(a_{k})\in w: ax=(a_{k}x_{k})\in Y~~ \forall x=(x_{k})\in X\}
\end{equation}
is called the $multiplier ~~~~space$ of $X$ and $Y$. One can easily observe for a sequence space $Z$ with $Y\subset Z$ and $Z\subset X$ that the inclusions              $ M(X,Y)\subset M(X,Z)$ and $M(X,Y)\subset M(Z,Y)$ hold, respectively. With the notation of (\ref{dual}), the $\alpha-,~\beta-$ and $\gamma-$ duals  of a sequence space $X$, which are respectively, denoted by $X^{\alpha},~ X^{\beta}$ and $X^{\gamma}$ are defined by
\begin{eqnarray*}
  X^{\alpha} = M(X,\ell_{1}),~~~~~~ X^{\beta} = M(X,cs)~\textrm{and}~ X^{\gamma} = M(X,bs).
\end{eqnarray*}\\
It is clear that $X^{\alpha}\subset X^{\beta}\subset X^{\gamma}$. Also, it can be obviously seen that the inclusions $X^{\alpha}\subset Y^{\alpha}$,  $X^{\beta}\subset Y^{\beta}$, and $X^{\gamma}\subset Y^{\gamma}$ hold whenever $Y\subset X$.

The following known results \cite{st} are fundamental for this section.

\begin{lem}\label{cs l}$A=(a_{nk})\in (cs:\ell_{1})$ if and only if
\begin{equation}
\sup_{N,K\in\mathcal{F}}\left|\sum_{n\in N}\sum_{k\in K}(a_{nk}-a_{n,k-1})\right|<\infty.
\end{equation}
\end{lem}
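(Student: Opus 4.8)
The final statement to prove is Lemma~\ref{cs l}, the characterization of the matrix class $(cs:\ell_1)$. The plan is to reduce the problem to a known characterization of $(c:\ell_1)$ (equivalently $(c_0:\ell_1)$), exploiting the fact that $cs$ is the matrix domain of the summation matrix $\Sigma$ in $c$, i.e.\ $cs = c_\Sigma$, where $(\Sigma x)_n = \sum_{k=0}^n x_k$. First I would recall the standard duality/transfer principle: if $A$ is to map $cs$ into $\ell_1$, then writing any $x \in cs$ as $x = \Sigma^{-1} t$ with $t = \Sigma x \in c$ (here $\Sigma^{-1}$ is the backward-difference triangle, $(\Sigma^{-1}t)_k = t_k - t_{k-1}$), the action of $A$ on $x$ becomes the action of a modified matrix $B = A\,\Sigma^{-1}$ on $t \in c$. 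A short Abel-summation computation gives the entries $b_{nk} = \sum_{j\geq k} \big(\text{something}\big)$; more precisely, since column $k$ of $\Sigma^{-1}$ is $e^{(k)} - e^{(k+1)}$ in the appropriate sense, one finds $b_{nk} = a_{nk} - a_{n,k+1}$ — but one must be careful about convergence of the rearranged series, which is exactly where the hypothesis that the relevant sums exist comes in.

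The key steps, in order, are: (1) establish the identity $cs = c_\Sigma$ and the explicit inverse triangle $\Sigma^{-1}$; (2) show that $A \in (cs:\ell_1)$ if and only if the matrix $B$ with entries $b_{nk} := \sum_{j=k}^{\infty}\text{(telescoped coefficients)} = a_{n,k}$ suitably transformed — concretely, verify that $(Ax)_n = \sum_k b_{nk} t_k$ with $b_{nk}$ expressible via the forward differences $a_{nk} - a_{n,k-1}$ after an index shift — belongs to $(c:\ell_1)$; (3) quote the classical characterization that $B = (b_{nk}) \in (c:\ell_1)$ if and only if $\sup_{N \in \mathcal{F}} \sum_n |b_{nk}|$-type conditions hold, which in the sharp form due to Stieglitz–Tietz (the reference \cite{st}) reads $\sup_{N,K \in \mathcal{F}} \big| \sum_{n\in N}\sum_{k\in K} b_{nk}\big| < \infty$ together with convergence of each column; (4) substitute back the relation between $b_{nk}$ and $a_{nk}$ to land on the stated condition $\sup_{N,K\in\mathcal{F}} \big|\sum_{n\in N}\sum_{k\in K}(a_{nk} - a_{n,k-1})\big| < \infty$. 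Finally I would note that the finite-set formulation over both $N$ and $K$ automatically encodes the requisite convergence of the series $\sum_k a_{nk}x_k$ for each fixed $n$ and each $x \in cs$, so no separate column condition needs to be stated.

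The main obstacle I anticipate is the bookkeeping in step (2): the passage from $A$ acting on $x$ to $B$ acting on $t = \Sigma x$ requires an Abel partial summation $\sum_{k=0}^{m} a_{nk} x_k = \sum_{k=0}^{m-1}(a_{nk} - a_{n,k+1})\,t_k + a_{nm} t_m$, and one must argue that the boundary term $a_{nm}t_m$ behaves correctly as $m\to\infty$ for every $x \in cs$ (equivalently every convergent $t$) — this is what forces $(a_{nk})_k$ to have a finite limit in $k$ and ties the column behaviour to the hypothesis, and it is the place where a naive termwise rearrangement could silently fail. Once that interchange is justified on the whole space $cs$ (not just on $\phi$), the reduction to \cite{st} is essentially mechanical and the claimed equivalence follows. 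I would keep the write-up short by citing the $(c:\ell_1)$ result verbatim and only spelling out the $\Sigma^{-1}$-substitution and the boundary-term estimate in detail.
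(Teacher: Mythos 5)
First, a point of comparison: the paper does not prove this lemma at all — it is one of the ``known results'' quoted from Stieglitz--Tietz \cite{st} — so any argument you give is by definition a different route; the question is only whether yours is complete. As it stands it has a genuine gap exactly at the decisive point. With $t=\Sigma x\in c$, the Abel summation you correctly write down gives $(Ax)_n=\sum_k(a_{nk}-a_{n,k+1})t_k+\alpha_n\lim_k t_k$, where $\alpha_n=\lim_k a_{nk}$ exists because the rows must lie in $cs^{\beta}=bv$. So the transform of $t$ is \emph{not} the matrix $B=A\Sigma^{-1}$ alone; reducing to $(c_0:\ell_1)$ along this route yields the pair of conditions $\sup_{N,K\in\mathcal{F}}\bigl|\sum_{n\in N}\sum_{k\in K}(a_{nk}-a_{n,k+1})\bigr|<\infty$ \emph{and} $(a_{n0})_n\in\ell_1$ (the latter from the limit term, since $\sum_k(a_{nk}-a_{n,k+1})+\alpha_n=a_{n0}$). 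Passing from this pair to the stated single condition with $a_{nk}-a_{n,k-1}$ is not a mere ``index shift'': with the convention $a_{n,-1}=0$, the backward-difference condition additionally controls the columns (take $K=\{0,\dots,m\}$; the inner sum telescopes to $a_{nm}$), and this is precisely what distinguishes $(cs:\ell_1)$ from $(cs_0:\ell_1)$ — compare the companion lemma for $(cs_0:\ell_1)$, which has $k+1$. Moreover the identity claimed in your step (2), $(Ax)_n=\sum_k b_{nk}t_k$ with $b_{nk}$ built from $a_{nk}-a_{n,k-1}$ and $t$ the partial sums, is simply false (test $A=$ identity), and your closing remark that ``no separate column condition needs to be stated'' is true of the target condition but not of the intermediate one your route produces; absorbing that column condition is exactly the missing argument. (Your statement of $(c:\ell_1)$ with an extra column-convergence clause is also not the sharp form; $(c:\ell_1)$, $(c_0:\ell_1)$ and $(\ell_\infty:\ell_1)$ all share the single Knopp--Lorentz-type condition.)

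The clean repair is to parametrize $cs$ by tail sums rather than partial sums. For $x\in cs$ put $s_k=\sum_{j\geq k}x_j$; then $x\mapsto s$ is a linear bijection of $cs$ onto $c_0$, and whenever the rows of $A$ satisfy $\sum_k|a_{nk}-a_{n,k-1}|<\infty$ (which is forced both by $A\in(cs:\ell_1)$, via $cs^{\beta}=bv$, and by the displayed condition with $N=\{n\}$) one gets, with no boundary term, $(Ax)_n=\sum_k(a_{nk}-a_{n,k-1})s_k$. Hence $A\in(cs:\ell_1)$ if and only if the matrix $(a_{nk}-a_{n,k-1})$ belongs to $(c_0:\ell_1)$, and the classical characterization of $(c_0:\ell_1)$ gives exactly the stated condition, the $\sup_{N,K}$ form and the form $\sup_{K}\sum_n\bigl|\sum_{k\in K}(a_{nk}-a_{n,k-1})\bigr|$ differing only by a factor of at most $4$. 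Alternatively, keep your $t\in c$ reduction, but then you must prove explicitly that the forward-difference condition together with $(a_{nk})_n\in\ell_1$ for each $k$ is equivalent to the stated backward-difference condition.
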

\begin{lem}$A=(a_{nk})\in (cs_0:\ell_{1})$ if and only if
\begin{equation}\label{53}
\sup_{N,K\in\mathcal{F}}\left|\sum_{n\in N}\sum_{k\in K}(a_{nk}-a_{n,k+1})\right|<\infty.
\end{equation}
\end{lem}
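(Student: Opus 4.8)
The plan is to follow the same scheme used in the characterization of $(cs:\ell_1)$ in Lemma \ref{cs l}, adjusting only for the fact that the natural basis of $cs_0$ behaves differently under summation by parts. Recall that $A=(a_{nk})\in(cs_0:\ell_1)$ means that for every $x=(x_k)\in cs_0$ the series $(Ax)_n=\sum_k a_{nk}x_k$ converges for each $n$ and $Ax\in\ell_1$. First I would pass to the sequence of partial sums: writing $s_k=\sum_{j=0}^k x_j$, a sequence $x$ lies in $cs_0$ precisely when $s=(s_k)\in c_0$, and $x_k=s_k-s_{k-1}$. Substituting this into $(Ax)_n$ and performing an Abel summation (being careful at the tail, which is where membership in $c_0$ rather than merely $c$ is used to kill the boundary term $a_{nk}s_k$ as $k\to\infty$), one obtains $(Ax)_n=\sum_k (a_{nk}-a_{n,k+1})s_k$. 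Thus $A$ maps $cs_0$ into $\ell_1$ if and only if the matrix $B=(b_{nk})$ with $b_{nk}=a_{nk}-a_{n,k+1}$ maps $c_0$ into $\ell_1$.

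Next I would invoke the classical Köthe–Toeplitz-type characterization of $(c_0:\ell_1)$: a matrix $B=(b_{nk})$ belongs to $(c_0:\ell_1)$ if and only if
\begin{equation*}
\sup_{N,K\in\mathcal F}\left|\sum_{n\in N}\sum_{k\in K} b_{nk}\right|<\infty ,
\end{equation*}
which is exactly the statement (with $b_{nk}=a_{nk}-a_{n,k+1}$) displayed in \eqref{53}. This is a standard result; in the notation of the paper it is part of the "following known results \cite{st}" cited just above, so it may be quoted directly. Substituting back $b_{nk}=a_{nk}-a_{n,k+1}$ into the finite-set supremum condition yields precisely \eqref{53}, completing the equivalence.

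For rigor I would also check the convergence of $\sum_k a_{nk}x_k$ for each fixed $n$ and each $x\in cs_0$ separately, rather than treating it as automatic: the finite-section condition \eqref{53} with $N=\{n\}$ forces $\sum_k |a_{nk}-a_{n,k+1}|<\infty$, i.e. the $n$-th row of $B$ is in $\ell_1$, and together with $s\in c_0$ this makes $\sum_k(a_{nk}-a_{n,k+1})s_k$ absolutely convergent and justifies the Abel rearrangement termwise. The one genuinely delicate point — and the step I would flag as the main obstacle — is the vanishing of the boundary term in the summation by parts: one must argue that for $x\in cs_0$ the quantity $a_{nk}s_k\to 0$ as $k\to\infty$, which follows because $s_k\to 0$ while the partial sums of the $n$-th row of $B$ stay bounded (hence $a_{nk}$ is bounded in $k$), so the product tends to zero. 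Everything else is a routine transcription of the $(c_0:\ell_1)$ characterization, exactly parallel to the proof of Lemma \ref{cs l}, and the statement follows.
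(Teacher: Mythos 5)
Your proof is essentially correct, but be aware that the paper itself contains no argument to compare against: this lemma is one of the "known results" quoted verbatim from Stieglitz--Tietz \cite{st} at the start of Section 5, so you are supplying the derivation that the authors delegate to the literature. Your route is the standard one behind that citation: the substitution $x_k=s_k-s_{k-1}$ identifies $cs_0$ with $c_0$ through partial sums, Abel summation replaces $A$ by the matrix $B=(b_{nk})$ with $b_{nk}=a_{nk}-a_{n,k+1}$, and the classical finite-set characterization of $(c_0:\ell_1)$ then yields exactly \eqref{53}; it is worth one extra line noting the (factor-of-four) equivalence between $\sup_{N,K\in\mathcal F}\bigl|\sum_{n\in N}\sum_{k\in K}b_{nk}\bigr|<\infty$ and $\sup_{K\in\mathcal F}\sum_n\bigl|\sum_{k\in K}b_{nk}\bigr|<\infty$, since the latter is the form in which the $(c_0:\ell_1)$ theorem is usually stated. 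The only point needing repair is your treatment of the boundary term $a_{nm}s_m$: the justification you give (bounded partial sums of the $n$-th row of $B$, hence $a_{nk}$ bounded in $k$) presupposes \eqref{53} and so is available only in the sufficiency direction. In the necessity direction, where you assume $A\in(cs_0:\ell_1)$ and must still derive the identity $(Ax)_n=\sum_k(a_{nk}-a_{n,k+1})s_k$, argue instead that convergence of $\sum_k a_{nk}x_k$ for every $x\in cs_0$ means each row of $A$ lies in the $\beta$-dual of $cs_0$, which (by a routine Banach--Steinhaus argument applied to the partial-sum functionals on $c_0$) consists precisely of the sequences $a$ with $\sum_k|a_k-a_{k+1}|<\infty$; this gives absolute convergence of $\sum_k b_{nk}s_k$ and boundedness of $(a_{nk})_k$, so the boundary term again vanishes and $B\in(c_0:\ell_1)$ follows, after which the classical characterization gives \eqref{53}. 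With that adjustment your argument is complete and is exactly the kind of proof the paper implicitly relies on via \cite{st}.
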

\begin{lem}\label{bs l}$A=(a_{nk})\in (bs:\ell_{1})$ if and only if \eqref{53} holds and
\begin{equation}\label{55}
    \lim_{k}a_{nk} = 0,~~~~\forall n\in\mathbb{N}.
\end{equation}

\end{lem}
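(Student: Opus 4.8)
The plan is to reduce the assertion, via the well-known isomorphism $bs\cong\ell_{\infty}$ furnished by the summation transform $\Sigma$, to the classical characterisation of $(\ell_{\infty}:\ell_{1})$. Writing $u=\Sigma x$, i.e. $u_{k}=\sum_{j=0}^{k}x_{j}$, one has $x\in bs$ exactly when $u\in\ell_{\infty}$, with $x$ recovered via $x_{k}=u_{k}-u_{k-1}$ and $\|x\|_{bs}=\|u\|_{\infty}$. The single identity driving the proof is Abel's partial summation
\begin{equation*}
\sum_{k=0}^{m}a_{nk}x_{k}=\sum_{k=0}^{m-1}(a_{nk}-a_{n,k+1})u_{k}+a_{nm}u_{m},
\end{equation*}
which turns $A$ acting on $x\in bs$ into the matrix $B=(b_{nk})$, $b_{nk}:=a_{nk}-a_{n,k+1}$, acting on $u\in\ell_{\infty}$. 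Under this identification, condition \eqref{53} is precisely the classical characterisation (in its $N,K$-form) of $B\in(\ell_{\infty}:\ell_{1})=(c_{0}:\ell_{1})$, whereas condition \eqref{55} is exactly what is needed to annihilate the boundary term $a_{nm}u_{m}$ in the limit when $u$ is merely bounded rather than null.

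For the necessity, observe first that $cs_{0}\subset bs$, so $A\in(bs:\ell_{1})$ implies $A\in(cs_{0}:\ell_{1})$, and the preceding lemma gives \eqref{53}. Taking $N=\{n\}$ in \eqref{53} yields $\sum_{k}|b_{nk}|<\infty$, whence by telescoping the $n$-th row converges, say $a_{nk}\to\ell_{n}$. To see $\ell_{n}=0$, apply $A$ to the sequence $x\in bs$ whose partial-sum sequence is $u_{k}=(-1)^{k}$; since $A\in(bs:\ell_{1})$, the row-series $(Ax)_{n}=\sum_{k}a_{nk}x_{k}$ converges, but in the identity above $\sum_{k=0}^{m-1}b_{nk}(-1)^{k}$ already converges absolutely, so convergence of the left-hand side forces $a_{nm}(-1)^{m}$ to converge as $m\to\infty$, and this is possible only if $\ell_{n}=0$. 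This establishes \eqref{55}.

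For the sufficiency, assume \eqref{53} and \eqref{55}, take $x\in bs$ and set $u=\Sigma x\in\ell_{\infty}$. Fixing $n$, \eqref{53} with $N=\{n\}$ gives $\sum_{k}|b_{nk}|<\infty$, so $\sum_{k}b_{nk}u_{k}$ converges absolutely, and \eqref{55} gives $a_{nm}u_{m}\to0$; hence the identity above shows that the $n$-th row-series converges and $(Ax)_{n}=\sum_{k}b_{nk}u_{k}=(Bu)_{n}$. But \eqref{53} is exactly the characterisation of $B\in(\ell_{\infty}:\ell_{1})$, so $Bu\in\ell_{1}$, i.e. $Ax\in\ell_{1}$; thus $A\in(bs:\ell_{1})$. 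If one prefers not to quote that characterisation, the bound
$$\|Ax\|_{\ell_{1}}\leq 4\,\|u\|_{\infty}\sup_{N,K\in\mathcal{F}}\Big|\sum_{n\in N}\sum_{k\in K}b_{nk}\Big|,$$
uniform over finite $N$, follows by the standard device of choosing unimodular signs over the finite sets $N$ and $K$ and splitting into sign-classes.

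The telescoping and sign-choosing computations are routine; the one genuinely delicate step — and the one most likely to be botched — is the termwise passage to the limit in Abel's identity, namely the equality $(Ax)_{n}=(Bu)_{n}$. This is exactly where the boundary term $a_{nm}u_{m}$ must be shown to vanish, and therefore exactly where hypothesis \eqref{55} is indispensable, in contrast to the $cs_{0}$ situation of the preceding lemma, where the corresponding $u$ is automatically null and no analogue of \eqref{55} is required.
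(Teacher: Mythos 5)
Your proposal is correct, but it cannot be "the same approach as the paper" because the paper offers no proof at all: Lemma \ref{bs l} is quoted, together with Lemmas \ref{cs l}--\ref{bs gama}, as a known result of Stieglitz and Tietz \cite{st}. What you supply is a self-contained derivation, and it is sound: the Abel identity $\sum_{k=0}^{m}a_{nk}x_{k}=\sum_{k=0}^{m-1}(a_{nk}-a_{n,k+1})u_{k}+a_{nm}u_{m}$ is the right engine; necessity of \eqref{53} via $cs_{0}\subset bs$ and Lemma 5.2 is legitimate; the row-wise absolute summability of $b_{nk}=a_{nk}-a_{n,k+1}$ (from \eqref{53} with $N=\{n\}$), the telescoping to get $a_{nk}\to\ell_{n}$, and the test sequence with partial sums $u_{k}=(-1)^{k}$ correctly force $\ell_{n}=0$, which is precisely the content of \eqref{55}; and in the sufficiency direction you correctly isolate the only delicate point, namely that \eqref{55} plus boundedness of $u$ kills the boundary term so that $(Ax)_{n}=(Bu)_{n}$, after which either the classical characterisation of $(\ell_{\infty}:\ell_{1})$ (equivalently $(c_{0}:\ell_{1})$) or your direct sign-splitting estimate finishes the argument. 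Compared with the paper's bare citation, your route has the merit of exhibiting exactly why \eqref{55} is needed for $bs$ but not for $cs_{0}$. Two small caveats, neither a gap: the constant $4$ in your final display is correct for real entries, while for complex $a_{nk}$ the same sign-class argument yields a larger constant (this does not affect the lemma); and the sign-splitting should formally be run on finite truncations in $n$ and $k$ first, with the passage to the full sums justified by the row-wise absolute convergence you already established.
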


\begin{lem}\label{cs c}$A=(a_{nk})\in (cs:c)$ if and only if
\begin{equation}\label{56}
    \sup_{n}\sum_{k}|a_{nk}-a_{n,k+1}|<\infty,
\end{equation}
and
\begin{equation}\label{58}
\lim_{n}a_{nk}~~~~~~~~~~ \textrm {exist for all} ~~~~~~k\in \mathbb{N}.
\end{equation}
\end{lem}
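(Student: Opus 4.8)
The plan is to reduce the statement to the classical Kojima–Schur characterization of the class $(c_0:c)$ by passing from a series to its sequence of partial sums. For $x=(x_k)\in cs$ set $s_k=\sum_{j=0}^k x_j$, so that $s=(s_k)\in c$, say $s_k\to\ell$, and $x_k=s_k-s_{k-1}$ with $s_{-1}=0$. A single Abel summation then gives, for all $m,n\in\mathbb{N}$,
\begin{equation*}
\sum_{k=0}^{m}a_{nk}x_k=a_{nm}s_m+\sum_{k=0}^{m-1}b_{nk}s_k,\qquad b_{nk}:=a_{nk}-a_{n,k+1},
\end{equation*}
and everything is organized around this identity.

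First I would prove sufficiency. Assuming \eqref{56} and \eqref{58}, each row $(a_{nk})_k$ has bounded variation, hence converges to some $\alpha_n$; letting $m\to\infty$ above shows that $(Ax)_n$ exists, and after writing $s_k=\ell+t_k$ with $t=(t_k)\in c_0$ and using $\sum_k b_{nk}=a_{n0}-\alpha_n$, that $(Ax)_n=a_{n0}\,\ell+\sum_k b_{nk}t_k$. Now \eqref{56} is precisely $\sup_n\sum_k|b_{nk}|<\infty$, while \eqref{58} gives that $\lim_n b_{nk}$ exists for each $k$ and that $\lim_n a_{n0}$ exists; hence $B=(b_{nk})\in(c_0:c)$, so $\sum_k b_{nk}t_k$ tends to a limit as $n\to\infty$, and therefore $Ax\in c$, i.e.\ $A\in(cs:c)$.

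For necessity, suppose $A\in(cs:c)$. Applying $A$ to $e^{(k)}\in cs$ (whose partial sums are eventually $1$) gives $Ae^{(k)}=(a_{nk})_n\in c$, which is \eqref{58}. Moreover each row of $A$ lies in $cs^{\beta}$, the space of bounded-variation sequences, so $\sum_k|b_{nk}|<\infty$ for each fixed $n$; then for an arbitrary $t=(t_k)\in c_0$ the sequence $x_k:=t_k-t_{k-1}$ lies in $cs_0\subset cs$, its partial sums being $t_k\to0$, and in the Abel identity the boundary term $a_{nm}t_m\to0$, so $(Ax)_n=(Bt)_n$. Thus $Bt\in c$ for every $t\in c_0$, i.e.\ $B\in(c_0:c)\subset(c_0:\ell_\infty)$, which forces $\sup_n\sum_k|b_{nk}|<\infty$, namely \eqref{56}.

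The step I expect to demand the most care is the treatment of the boundary term $a_{nm}s_m$ in the Abel summation: under the hypotheses one must verify that it converges as $m\to\infty$ so that $(Ax)_n$ is genuinely defined, and in the necessity direction one must know it cannot obstruct the identification of the action of $A$ on $cs$ with that of the matrix $B$ on $c_0$. This is exactly where $cs$ behaves differently from $bs$, and it explains why, in contrast with Lemma \ref{bs l}, no condition of the form $\lim_k a_{nk}=0$ is needed here.
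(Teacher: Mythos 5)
Your argument is correct, but note that the paper itself gives no proof of Lemma \ref{cs c}: it is quoted, with the other lemmas of that section, as a known result from Stieglitz and Tietz \cite{st}. Your proposal therefore supplies a derivation where the paper has only a citation, and it is sound: the Abel-summation identity is right; in the sufficiency direction the rearrangement $(Ax)_n=a_{n0}\ell+\sum_k b_{nk}t_k$ is legitimate because \eqref{56} makes each row of bounded variation (so $a_{nm}\to\alpha_n$ and $\sum_k b_{nk}=a_{n0}-\alpha_n$ telescopes), and then \eqref{56} and \eqref{58} are exactly the $(c_0:c)$ conditions for $B$ together with convergence of $(a_{n0})_n$; in the necessity direction, \eqref{58} falls out of $Ae^{(k)}\in c$, and the passage from $t\in c_0$ to $x\in cs_0$ correctly identifies $(Ax)_n=(Bt)_n$, so $B\in(c_0:\ell_\infty)$ yields \eqref{56}. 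The cost is that you lean on three classical inputs rather than one: the Kojima--Schur characterization of $(c_0:c)$, the characterization of $(c_0:\ell_\infty)$ (Banach--Steinhaus), and $cs^{\beta}=bv$, the last being what guarantees $\sum_k|b_{nk}|<\infty$ row-wise and the vanishing of the boundary term $a_{nm}t_m$ in the necessity direction; these are more elementary than the cited result, so the reduction is a reasonable way to make the lemma self-contained, though a fully self-contained proof would still have to justify $cs^{\beta}\subset bv$. Your closing observation is apt: it is precisely the convergence of $a_{nm}$ in $m$ for rows of bounded variation that distinguishes $(cs:c)$ from the $bs$ cases, where the additional column condition \eqref{55} must be imposed.
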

\begin{lem}\label{cs_0 c}$A=(a_{nk})\in (cs_0:c)$ if and only if \eqref{56} holds
\begin{equation}
    \lim_{n}(a_{nk}-a_{n,k+1})~~~~~~~~~~ \textrm {exist for all}~~~~ k\in \mathbb{N}.
\end{equation}
\end{lem}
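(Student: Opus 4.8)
The final statement to prove is the characterization of the matrix class $(cs_0:c)$ in Lemma~\ref{cs_0 c}: namely $A=(a_{nk})\in(cs_0:c)$ if and only if \eqref{56} holds and $\lim_n(a_{nk}-a_{n,k+1})$ exists for every $k\in\mathbb{N}$.

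\begin{proof}[Proof proposal]
The plan is to reduce this to the already-available characterization of $(cs_0:c)$ for the $\ell_\infty$-row-bounded part together with a summation-by-parts identity, mirroring the proof of Lemma~\ref{cs c}. First I would observe that $x=(x_k)\in cs_0$ exactly when the sequence $s=(s_k)$ of partial sums $s_k=\sum_{j=0}^{k}x_j$ lies in $c_0$, and conversely $x_k=s_k-s_{k-1}$. Writing $(Ax)_n=\sum_k a_{nk}x_k=\sum_k a_{nk}(s_k-s_{k-1})$ and performing an Abel summation (legitimate once we know the series terminates in the limit, which the hypothesis on $\lim_k a_{nk}$ implied by \eqref{56} will give), one gets $(Ax)_n=\sum_k (a_{nk}-a_{n,k+1})s_k$. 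Hence $A\in(cs_0:c)$ if and only if the matrix $B=(b_{nk})$ with $b_{nk}=a_{nk}-a_{n,k+1}$ maps $c_0$ into $c$. This is the crucial translation step, and it is where care is needed: I must check that the Abel rearrangement is valid, i.e. that $a_{nk}s_k\to 0$ as $k\to\infty$ for each fixed $n$, which follows because \eqref{56} forces $\sum_k|a_{nk}-a_{n,k+1}|<\infty$ and hence $(a_{nk})_k$ converges, while $s_k\to 0$; one then argues the tail of the rearranged series is controlled.

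Next I would invoke the classical Kojima--Schur type theorem characterizing $(c_0:c)$: a matrix $B=(b_{nk})$ maps $c_0$ into $c$ if and only if $\sup_n\sum_k|b_{nk}|<\infty$ and $\lim_n b_{nk}$ exists for each $k$. Translating back through $b_{nk}=a_{nk}-a_{n,k+1}$, the first condition becomes precisely $\sup_n\sum_k|a_{nk}-a_{n,k+1}|<\infty$, which is \eqref{56}, and the second becomes exactly the stated condition that $\lim_n(a_{nk}-a_{n,k+1})$ exists for all $k\in\mathbb{N}$. This establishes the equivalence in one clean pass.

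For the converse direction (sufficiency), assuming \eqref{56} and the limit condition, the same Abel identity shows $(Ax)_n=\sum_k(a_{nk}-a_{n,k+1})s_k$ for all $x\in cs_0$, and the $(c_0:c)$ characterization applied to $B$ guarantees $(Ax)_n$ converges as $n\to\infty$; thus $Ax\in c$. I expect the main obstacle to be the careful justification of the termwise Abel summation and the interchange of limits it implicitly requires — in particular verifying that under \eqref{56} the row sequence $(a_{nk})_{k\in\mathbb{N}}$ is Cauchy (so its terms are bounded and its limit exists), which is needed both to make sense of $\sum_k(a_{nk}-a_{n,k+1})s_k$ and to see that this series actually equals $\sum_k a_{nk}x_k$ for every $x\in cs_0$. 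Once that bookkeeping is in place, the rest is a direct appeal to the standard matrix-class lemmas already recorded in this section, exactly as in the proof of Lemma~\ref{cs c}.
\end{proof}
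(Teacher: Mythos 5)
The paper itself does not prove Lemma \ref{cs_0 c}: like the other lemmas of Section 5 it is quoted from Stieglitz and Tietz \cite{st}, so there is no internal proof to compare against. Judged on its own merits, your reduction --- replace $x\in cs_0$ by its partial-sum sequence $s\in c_0$, Abel-sum to get $(Ax)_n=\sum_k(a_{nk}-a_{n,k+1})s_k$, then invoke the classical characterization of $(c_0:c)$ --- is the standard route, and your sufficiency half is sound: \eqref{56} gives $\sum_k|a_{nk}-a_{n,k+1}|<\infty$ for each fixed $n$, hence each row converges and is bounded, the boundary term $a_{nm}s_m$ tends to $0$, the series $\sum_k a_{nk}x_k$ converges, and the $(c_0:c)$ theorem applied to $b_{nk}=a_{nk}-a_{n,k+1}$ yields $Ax\in c$.

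The genuine gap is in the necessity direction, where your argument is circular: you justify the Abel rearrangement (equivalently the identity $Ax=Bs$) by appealing to \eqref{56}, but in that direction \eqref{56} is exactly what must be proved; all you may assume is that $\sum_k a_{nk}x_k$ converges for every $x\in cs_0$ and that $Ax\in c$. To close the gap you need the separate fact that $A\in(cs_0:c)$ forces each row of $A$ into $(cs_0)^{\beta}$, i.e. $\sum_k|a_{nk}-a_{n,k+1}|<\infty$ row by row; this requires its own argument (a gliding-hump construction, or continuity of the coordinate functionals $x\mapsto(Ax)_n$ on the BK-space $cs_0$), and only then is $Ax=Bs$ valid, so that $B\in(c_0:c)$ and the uniform bound \eqref{56} follows via the Kojima--Schur conditions, equivalently from the Banach--Steinhaus theorem. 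Note also that the second condition of the lemma costs nothing: $e^{(k)}-e^{(k+1)}\in cs_0$ and $\bigl(A(e^{(k)}-e^{(k+1)})\bigr)_n=a_{nk}-a_{n,k+1}$, so membership $A\in(cs_0:c)$ immediately yields the existence of $\lim_n(a_{nk}-a_{n,k+1})$ for every $k$.
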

\begin{lem}\label{228}$A=(a_{nk})\in (bs:c)$ if and only if  \eqref{55} and \eqref {58} hold and
\begin{equation}
\sum_{k}|a_{nk}-a_{n,k-1}|~~~~\textrm converges.
\end{equation}
\end{lem}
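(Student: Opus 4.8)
My plan is to reduce the characterization of $(bs:c)$ to the classical description of $(\ell_\infty:c)$, exploiting that passing to partial sums is a linear bijection of $bs$ onto $\ell_\infty$: indeed $x=(x_k)\in bs$ exactly when $u=(u_k)$, $u_k=\sum_{j=0}^{k}x_j$, lies in $\ell_\infty$, and then $x_k=u_k-u_{k-1}$ (with $u_{-1}=0$). The one computational tool is Abel's partial summation,
\begin{equation*}
\sum_{k=0}^{m}a_{nk}x_k=\sum_{k=0}^{m-1}(a_{nk}-a_{n,k+1})u_k+a_{nm}u_m .
\end{equation*}

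First I would settle the necessity of the ``row'' conditions. If $A\in(bs:c)$, then $(Ax)_n=\sum_k a_{nk}x_k$ converges for every $x\in bs$ and every $n$, i.e. each row $(a_{nk})_{k}$ lies in $(bs)^{\beta}$; feeding the bounded sequences $u$ that arise into the identity above shows that $(bs)^{\beta}$ consists precisely of the sequences tending to $0$ and of bounded variation. This gives \eqref{55} together with the convergence of $\sum_k|a_{nk}-a_{n,k-1}|$ (which differs from $\sum_k|a_{nk}-a_{n,k+1}|$ only by the term $|a_{n0}|$). Taking $x=e^{(k)}\in\phi\subset bs$, one fixed $k$ at a time, forces $\lim_n a_{nk}$ to exist, which is \eqref{58}; set $\alpha_k:=\lim_n a_{nk}$.

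For the decisive equivalence I would pass to the matrix $C=(c_{nk})$ with $c_{nk}:=a_{nk}-a_{n,k+1}$. Granting \eqref{55} and the bounded variation of each row, letting $m\to\infty$ in the Abel identity gives $(Ax)_n=\sum_k c_{nk}u_k=(Cu)_n$ for all $x\in bs$, where $u\in\ell_\infty$ is the partial-sum sequence associated to $x$. Since $x\mapsto u$ is onto $\ell_\infty$ and the $\beta$-dual of $\ell_\infty$ is $\ell_1$, this shows $A\in(bs:c)$ if and only if $C\in(\ell_\infty:c)$. I would then invoke the classical characterization of $(\ell_\infty:c)$ (see \cite{st}): $C\in(\ell_\infty:c)$ if and only if $\lim_n\sum_k|c_{nk}-\gamma_k|=0$, where $\gamma_k:=\lim_n c_{nk}$ must exist; by \eqref{58} we have $\gamma_k=\alpha_k-\alpha_{k+1}$, and this single condition already entails $\sup_n\sum_k|c_{nk}|<\infty$. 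Rewriting everything via $c_{nk}=a_{nk}-a_{n,k+1}$ returns \eqref{55}, \eqref{58} and the condition on $\sum_k|a_{nk}-a_{n,k-1}|$ in its precise (uniform, $\ell_1$-continuous) form.

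I expect the only genuine obstacle to be exactly this last step, the passage through $bs\cong\ell_\infty$: in the companion case $(cs:c)$ of Lemma~\ref{cs c} it suffices to have a uniform bound plus pointwise column limits, because $cs\cong c$ and the classical $(c:c)$ theorem applies, whereas the non-separability of $\ell_\infty$ means that mere boundedness of $\sum_k|a_{nk}-a_{n,k+1}|$ is not enough — one must demand $\ell_1$-convergence of the differenced rows to $(\alpha_k-\alpha_{k+1})$. Correctly isolating and phrasing this condition is the delicate point; everything else is routine bookkeeping with Abel summation and the identification $bs\cong\ell_\infty$.
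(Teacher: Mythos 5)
The paper itself contains no proof of this lemma: like the rest of Lemmas 5.1--5.9 it is quoted as a known result from the Stieglitz--Tietz survey \cite{st}, so there is no internal argument to compare with. Your sketch reconstructs the standard proof of that classical fact and is essentially correct: the rows of $A$ must lie in $bs^{\beta}$, which is exactly the set of sequences of bounded variation tending to zero; Abel summation then turns $(Ax)_n$ into $(Cu)_n$ with $c_{nk}=a_{nk}-a_{n,k+1}$ and $u\in\ell_\infty$ the partial-sum sequence of $x$; testing with $e^{(k)}$ gives \eqref{58}; and the Schur-type characterization of $(\ell_\infty:c)$ finishes the argument. Your closing observation is in fact the essential point: the third condition has to be read as convergence of $\sum_k|a_{nk}-a_{n,k-1}|$ \emph{uniformly in} $n$ (equivalently $\lim_n\sum_k|(a_{nk}-a_{n,k+1})-(\alpha_k-\alpha_{k+1})|=0$, where $\alpha_k=\lim_n a_{nk}$), which is how it stands in \cite{st}; convergence for each fixed $n$ together with \eqref{55} and \eqref{58} is not sufficient. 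For instance, $a_{nk}=1$ for $k\leq n$ and $a_{nk}=0$ for $k>n$ satisfies \eqref{55}, \eqref{58} and the per-row convergence (each row sum of differences equals $2$), yet $(Ax)_n=\sum_{k=0}^{n}x_k$ does not converge for every $x\in bs$. So your proof is sound provided you retain the uniformity in the final translation step rather than the weaker per-row reading suggested by the wording of the lemma.
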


\begin{lem}\label{cs gama}$A=(a_{nk})\in (cs:\ell_{\infty})$ if and only if
\begin{equation}
    \sup_{n}\sum_{k}|a_{nk}-a_{n,k-1}|<\infty.
\end{equation}
\end{lem}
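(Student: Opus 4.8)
The plan is to reduce the characterization of $A = (a_{nk}) \in (cs:\ell_\infty)$ to the already-known characterization of boundedness of sequences obtained by applying $A$ to elements of $cs$. The key idea is that for $x \in cs$, the partial sums $\sum_{k} a_{nk} x_k$ can be rewritten via Abel summation (summation by parts), using the fact that $x \in cs$ means the sequence of partial sums $X_m = \sum_{k=0}^m x_k$ converges, hence is bounded. Writing $x_k = X_k - X_{k-1}$ with $X_{-1} = 0$, we get $\sum_{k=0}^m a_{nk} x_k = \sum_{k=0}^m a_{nk}(X_k - X_{k-1}) = \sum_{k=0}^{m-1}(a_{nk} - a_{n,k+1})X_k + a_{nm}X_m$; equivalently, after reindexing, the relevant transformed matrix has entries $b_{nk} = a_{nk} - a_{n,k-1}$ acting on the bounded (in fact convergent) sequence $(X_k)$.

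First I would make the change of variables explicit: $x \in cs$ if and only if $X = (X_m) \in c$, and this correspondence is a linear isometry-type bijection between $cs$ and $c$. Under it, the condition ``$Ax \in \ell_\infty$ for all $x \in cs$'' becomes ``$B X \in \ell_\infty$ for all $X \in c$'', where $B = (b_{nk})$ with $b_{nk} = a_{nk} - a_{n,k-1}$ (using the convention $a_{n,-1} = 0$). Then I would invoke the classical characterization that $B \in (c : \ell_\infty)$ if and only if $\sup_n \sum_k |b_{nk}| < \infty$, which is the standard Kojima--Schur-type boundedness condition and is certainly available in the literature cited (it is the analogue of the lemmas already quoted in this section). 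Substituting back $b_{nk} = a_{nk} - a_{n,k-1}$ gives precisely $\sup_n \sum_k |a_{nk} - a_{n,k-1}| < \infty$, which is the claimed condition.

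The main obstacle, and the step requiring care, is justifying the Abel summation rearrangement in the limit: one must check that the boundary term $a_{nm} X_m$ behaves correctly as $m \to \infty$ and that the series $\sum_k (a_{nk} - a_{n,k-1}) X_k$ actually converges for each fixed $n$ when $X \in c$. This is where the condition $\sup_n \sum_k |a_{nk} - a_{n,k-1}| < \infty$ does double duty: it guarantees both that $\sum_k |a_{nk} - a_{n,k-1}| < \infty$ for each $n$ (so the rearranged series converges absolutely against the bounded sequence $X$) and that $a_{nm} = \sum_{j > m}(a_{n,j-1} - a_{nj}) \to 0$, so no boundary term survives. Conversely, if $Ax \in \ell_\infty$ for all $x \in cs$ but the supremum were infinite, one would construct, by a gliding-hump argument on the columns (choosing signs and a sparse sequence of indices), an $x \in cs$ — equivalently an $X \in c_0 \subset c$ — for which $(Ax)_n$ is unbounded, contradicting the hypothesis; this is the standard necessity direction and mirrors the proofs underlying the earlier lemmas. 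I would therefore present the forward direction as a short computation reducing to the known $(c:\ell_\infty)$ result, and the converse as the same reduction run backwards, citing \cite{st} for the scalar facts.
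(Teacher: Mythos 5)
Your reduction goes wrong at the ``reindexing'' step. The substitution $x_k=X_k-X_{k-1}$ genuinely produces the \emph{forward}-difference matrix $b_{nk}=a_{nk}-a_{n,k+1}$ together with the boundary term $a_{nm}X_m$, and no reindexing converts the condition $\sup_n\sum_k|a_{nk}-a_{n,k+1}|<\infty$ into the stated condition $\sup_n\sum_k|a_{nk}-a_{n,k-1}|<\infty$: with the convention $a_{n,-1}=0$ the backward sum contains the extra term $|a_{n0}|$, so the two conditions differ exactly by the requirement $\sup_n|a_{n0}|<\infty$, which is not automatic. The difference is not cosmetic. The matrix with constant rows $a_{nk}=n$ has all forward differences zero, yet it sends $e^{(0)}\in cs$ to the unbounded sequence $(n)$, so the forward condition (which is precisely \eqref{56}, i.e.\ the $(cs_0:\ell_\infty)$ condition of Lemma \ref{cs 0 gama}) does not characterize $(cs:\ell_\infty)$; as written, your argument proves the wrong criterion. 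A second, related error: you dispose of the boundary term by writing $a_{nm}=\sum_{j>m}(a_{n,j-1}-a_{nj})\to0$, but that identity presupposes $\lim_j a_{nj}=0$, which is exactly what cannot be assumed. Telescoping with $a_{n,-1}=0$ gives $a_{nm}=\sum_{k=0}^m(a_{nk}-a_{n,k-1})$, whose limit $\alpha_n$ exists under the row condition but need not vanish: for $a_{nk}\equiv1$ one has $A\in(cs:\ell_\infty)$ and $a_{nm}\equiv1$. In the limit the correct identity is $\sum_k a_{nk}x_k=\sum_k(a_{nk}-a_{n,k+1})X_k+\alpha_n\lim_mX_m$, and the boundedness of $(\alpha_n)$ --- necessary by testing $x=e^{(0)}$, and supplied by the backward condition through its $|a_{n0}|$ term --- is precisely what separates $(cs:\ell_\infty)$ from $(cs_0:\ell_\infty)$.

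Two clean repairs are available. Either keep the limit term, add the necessary condition $\sup_n|a_{n0}|<\infty$ obtained from $x=e^{(0)}$, and check that ``forward condition plus $\sup_n|a_{n0}|<\infty$'' is equivalent to the stated backward condition; or replace partial sums by tails, $T_k=\sum_{j\geq k}x_j$, which gives a linear bijection of $cs$ onto $c_0$ and the exact identity $\sum_k a_{nk}x_k=\sum_k(a_{nk}-a_{n,k-1})T_k$ (no surviving boundary term, since $T_m\to0$), after which the classical criterion for $(c_0:\ell_\infty)$ yields the statement at once. Note also that the paper itself offers no proof of this lemma --- it is quoted from Stieglitz and Tietz \cite{st} --- so any argument you supply has to stand entirely on its own, and in its present form it does not.
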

\begin{lem}\label{cs 0 gama}$A=(a_{nk})\in (cs_0:\ell_{\infty})$ if and only if  \eqref {56} holds.
\end{lem}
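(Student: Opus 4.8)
The final statement is Lemma~\ref{cs 0 gama}: $A=(a_{nk})\in(cs_0:\ell_\infty)$ if and only if \eqref{56} holds, i.e.\ $\sup_n\sum_k|a_{nk}-a_{n,k+1}|<\infty$.

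The plan is to exploit the standard Abel-summation identity together with the structure of $cs_0$. First I would record the observation that since $cs_0$ has the sequences $e^{(k)}$ as a Schauder basis (finitely nonzero sequences are dense in $cs_0$ in the $bs$-norm), a sequence $x=(x_k)\in cs_0$ is precisely one of the form $x_k=t_{k-1}-t_k$ for a null sequence $t=(t_k)$ with $t_{-1}=0$; equivalently, writing $t_k=-\sum_{j=0}^{k}x_j$, the partial sums $\sum_{j=0}^{k}x_j$ tend to a limit which is $0$, and $t_k\to 0$. The key computational step is then Abel summation: for fixed $n$,
\begin{equation*}
(Ax)_n=\sum_k a_{nk}x_k=\sum_k a_{nk}(t_{k-1}-t_k)=\sum_k(a_{n,k+1}-a_{nk})t_k,
\end{equation*}
valid when the boundary terms vanish, which they do when \eqref{56} holds since then $a_{nk}\to 0$ as $k\to\infty$ for each $n$ (the tail $\sum_{k\ge K}|a_{nk}-a_{n,k+1}|\to 0$ forces $a_{nk}$ to be Cauchy, and being the tail of a series it converges; combined with $t_k\to 0$ and boundedness of the difference-sums one gets $a_{nk}t_k\to 0$). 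Hence $A$ maps $cs_0$ into $\ell_\infty$ iff the matrix $B=(b_{nk})$ with $b_{nk}=a_{n,k+1}-a_{nk}$ maps $c_0$ into $\ell_\infty$.

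Next I would invoke the classical characterization of $(c_0:\ell_\infty)$: $B\in(c_0:\ell_\infty)$ if and only if $\sup_n\sum_k|b_{nk}|<\infty$. (This is the well-known Köthe–Toeplitz–type result; it follows from the uniform boundedness principle on the $BK$-space $c_0$ in one direction, and from testing on $e^{(k)}$ and finite sections with appropriate signs in the other.) Substituting $b_{nk}=a_{n,k+1}-a_{nk}$ turns this into exactly condition \eqref{56}, which completes the equivalence. For the necessity direction one can also argue directly: if $A\in(cs_0:\ell_\infty)$ then each row $(a_{nk})_k$ defines a continuous linear functional on the $BK$-space $cs_0$, so $a_n\in cs_0^\beta$; the $\beta$-dual of $cs_0$ consists of sequences whose difference sequence is in $\ell_1$, forcing $\sum_k|a_{nk}-a_{n,k+1}|<\infty$ for each $n$, and then the uniform bound $\sup_n$ comes from the uniform boundedness of the operator norms $\|A\|=\sup_n\|a_n\|_{cs_0^\beta}$.

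The main obstacle is the careful justification of the boundary/tail terms in the Abel summation — namely showing that under \eqref{56} the series $\sum_k a_{nk}x_k$ actually converges for every $x\in cs_0$ (so that $(Ax)_n$ is well defined) and that the rearrangement into $\sum_k(a_{n,k+1}-a_{nk})t_k$ is legitimate. This is where one needs that \eqref{56} yields both the convergence of each row's difference series and the decay $a_{nk}\to 0$; with $t$ null and $(t_k)$ bounded the interchange and vanishing of $a_{nk}t_k$ at infinity follow. Everything else is a direct appeal to the known class $(c_0:\ell_\infty)$ (or, equivalently, to Lemma~\ref{cs 0 gama}'s analogue for $\ell_\infty$ targets already implicit in the listed lemmas) and routine bookkeeping, so I would keep those parts brief and refer to \cite{st} for the underlying matrix-class characterization.
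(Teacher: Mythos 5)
The paper offers no proof of this lemma at all: it is simply quoted from the Stieglitz--Tietz tables \cite{st}, so your argument has to stand as a self-contained derivation, and in substance it does. Your route --- identify $x\in cs_0$ with $t\in c_0$ via $t_k=-\sum_{j=0}^{k}x_j$, Abel-sum the partial sums $\sum_{k=0}^{m}a_{nk}x_k=\sum_{k=0}^{m-1}(a_{n,k+1}-a_{nk})t_k-a_{nm}t_m$, and reduce to the classical characterization of $(c_0:\ell_\infty)$ by a uniform $\ell_1$-bound on the rows of the difference matrix --- is exactly the standard proof behind the cited table entry, and your necessity argument via the row functionals (each row lies in the $\beta$-dual of $cs_0$, whose members have difference sequence in $\ell_1$, plus uniform boundedness) is also sound.

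Two side claims you make are false, although neither is load-bearing, so you should remove them rather than rely on them. First, $e^{(k)}$ is not a Schauder basis of $cs_0$ and $\phi\not\subset cs_0$: the partial sums of $e^{(k)}$ tend to $1$, so $e^{(k)}\notin cs_0$. What is true, and all you actually use, is that $x\in cs_0$ if and only if $x_k=t_{k-1}-t_k$ for some $t\in c_0$ (take $t_k=-\sum_{j=0}^{k}x_j$); the dense finitely supported elements of $cs_0$ are those with zero sum, and the natural basis is $\{e^{(k)}-e^{(k+1)}\}$. Second, \eqref{56} does not force $a_{nk}\to 0$ as $k\to\infty$; it only forces $(a_{nk})_k$ to converge. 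For instance $a_{nk}\equiv 1$ satisfies \eqref{56}, has limit $1$ along each row, and nevertheless belongs to $(cs_0:\ell_\infty)$ since $\sum_k x_k=0$ for every $x\in cs_0$. The boundary term $a_{nm}t_m$ needs only boundedness of $(a_{nm})_m$ (which convergence supplies) together with $t_m\to 0$ --- a point you do make in the same sentence --- so state it that way and drop the claim of nullity. With these repairs the proof is correct.
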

\begin{lem}\label{bs gama}$A=(a_{nk})\in (bs:\ell_{\infty})$ if and only if \eqref{55} and \eqref{56} hold.
\end{lem}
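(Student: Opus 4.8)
The plan is to exploit the fact that $bs$ is the matrix domain of $\ell_\infty$ under the summation triangle (whose $(n,k)$ entry is $1$ for $k\le n$ and $0$ otherwise), so that a matrix map out of $bs$ can be reduced to a matrix map out of $\ell_\infty$, for which the answer is classical ($\sup_n\sum_k|b_{nk}|<\infty$). Concretely, for $x=(x_k)\in bs$ set $y_m=\sum_{k=0}^m x_k$; then $y=(y_m)\in\ell_\infty$ with $\|y\|_\infty=\|x\|_{bs}$, and conversely every $y\in\ell_\infty$ arises from $x_k=y_k-y_{k-1}$. Abel's summation by parts gives
\begin{equation*}
\sum_{k=0}^{m}a_{nk}x_k=a_{nm}y_m+\sum_{k=0}^{m-1}(a_{nk}-a_{n,k+1})y_k\qquad(n,m\in\mathbb{N}),
\end{equation*}
so everything is governed by the difference matrix $B=(b_{nk})$ with $b_{nk}=a_{nk}-a_{n,k+1}$ together with the boundary term $a_{nm}y_m$.

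First I would prove sufficiency. Suppose \eqref{55} and \eqref{56} hold, and fix $n$. By \eqref{56} we have $\sum_k|b_{nk}|<\infty$, hence $\sum_k b_{nk}y_k$ converges absolutely for every $y\in\ell_\infty$, while \eqref{55} makes $a_{nm}y_m\to0$. Letting $m\to\infty$ in the identity above shows that $(Ax)_n=\sum_k b_{nk}y_k$ exists for each $n$, and
\begin{equation*}
|(Ax)_n|\le\|y\|_\infty\sum_k|b_{nk}|\le\Big(\sup_n\sum_k|b_{nk}|\Big)\|x\|_{bs},
\end{equation*}
so that $Ax\in\ell_\infty$; thus $A\in(bs:\ell_\infty)$.

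For necessity I would argue as follows. Suppose $A\in(bs:\ell_\infty)$; then for each fixed $n$ the series $(Ax)_n=\sum_k a_{nk}x_k$ must converge for every $x\in bs$, i.e.\ $(a_{nk})_k$ lies in $bs^{\beta}$. Testing against the $x\in bs$ whose partial sums $y_k$ are chosen unimodular with $b_{nk}y_k=|b_{nk}|$, and then against $y_k=(-1)^k$, forces $\sum_k|b_{nk}|<\infty$ and $\lim_k a_{nk}=0$; in other words $bs^{\beta}$ is the space of null sequences of bounded variation, and this already yields \eqref{55}. Granting it, $f_n(x):=(Ax)_n=\sum_k b_{nk}y_k$ is a well-defined linear functional on the Banach space $bs$ with $\|f_n\|_{bs^{*}}=\sum_k|b_{nk}|$ (the supremum being reached in the limit by the unimodular test sequences above). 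Since $A\in(bs:\ell_\infty)$ means $\sup_n|f_n(x)|=\|Ax\|_\infty<\infty$ for every $x\in bs$, the family $\{f_n\}$ is pointwise bounded on $bs$, and the Banach--Steinhaus theorem yields $\sup_n\|f_n\|_{bs^{*}}=\sup_n\sum_k|b_{nk}|<\infty$, which is \eqref{56}.

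The summation-by-parts reduction and the dual-norm computation are routine. The one step needing real care is the necessity of \eqref{55}: one must show that membership in $bs^{\beta}$ forces a row to converge to $0$, ruling out not only slowly decaying oscillating rows such as $a_{nk}\sim(-1)^k/\log k$ but, more to the point, rows that do not converge at all — for instance a constant row, for which $B$ vanishes and \eqref{56} holds trivially yet $A\notin(bs:\ell_\infty)$. Equivalently, the crux is the identification of $bs^{\beta}$ with the space of sequences of bounded variation that tend to zero.
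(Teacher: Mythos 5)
Your argument is correct, but there is nothing in the paper to measure it against: the paper states Lemma \ref{bs gama} as a known characterization, citing Stieglitz--Tietz \cite{st}, and gives no proof. Your route --- transporting the problem to $\ell_\infty$ through the isometry $x\mapsto y$, $y_m=\sum_{k=0}^{m}x_k$, Abel summation with the difference matrix $b_{nk}=a_{nk}-a_{n,k+1}$, the identification of $bs^{\beta}$ with the null sequences of bounded variation, and Banach--Steinhaus for the uniform bound \eqref{56} --- is a sound, self-contained proof; the sufficiency half and the dual-norm computation $\|f_n\|_{bs^{*}}=\sum_k|b_{nk}|$ are exactly right. One caveat about the necessity half as you phrased it: the unimodular test sequence alone does \emph{not} force $\sum_k|b_{nk}|<\infty$, because the boundary term can cancel the growing variation; for the row $a_{nk}=k+1$ one has $b_{nk}=-1$, the unimodular choice is $y_k=-1$, and the tested series collapses to the constant $-a_{n0}$ even though the row has infinite variation. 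The two tests must therefore be used in the order opposite to the one your sentence suggests: convergence of $\sum_k a_{nk}x_k$ for the $x$ whose partial sums are $(-1)^k$ is precisely convergence of $a_{n0}+2\sum_{k\geq1}(-1)^{k}a_{nk}$, whose terms must tend to zero, which gives \eqref{55}; only then does the unimodular test, whose boundary term $a_{nm}y_m$ now vanishes, yield the absolute convergence of the row differences, after which your Banach--Steinhaus step delivers \eqref{56}. With that reordering made explicit, the proof is complete.
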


Now, we prove the following result.
\begin{thm} Define the sets  $m_{1}^{\lambda}$ and $m_{2}^{\lambda}$ as follows:
\begin{equation}\label{m bir }
    m_{1}^{\lambda}=\left\{a=(a_n)\in w:\sup_{N,K\in\mathcal{F}}\left|\sum_{n\in N}\sum_{k\in K}(b_{nk}^{\lambda}-b_{n,k-1}^{\lambda})\right|<\infty\right\}
\end{equation}
and
\begin{equation}\label{m iki }
    m_{2}^{\lambda}=\left\{a=(a_n)\in w:\sup_{N,K\in\mathcal{F}}\left|\sum_{n\in N}\sum_{k\in K}(b_{nk}^{\lambda}-b_{n,k+1}^{\lambda})\right|<\infty\right\};
\end{equation}
where the matrix $B^{\lambda}=(b_{nk}^{\lambda})$ is defined via the sequence $a=(a_{n})\in w$ by
\begin{eqnarray*}
   b_{nk}^{\lambda}=\left\{\begin{array}{ll}
    (-1)^{n-k}\frac{\lambda_{k}}{\lambda_n-\lambda_n-1}{a_{n}} &~~~~~\textrm{if}~~~~~~~~n-1\leq k\leq n,\\
    0  &~~~~\textrm{if}~~~~~~  ~~~~n-1>k ~~~~~\textrm{or}~~~~  k>n,\\
\end{array}
\right.
\end{eqnarray*}
for all $n,~k\in\mathbb{N}$.
Then $\{cs^{\lambda}\}^{\alpha}=m_{1}^{\lambda}$ and $\{cs_0^{\lambda}\}^{\alpha}=\{bs^{\lambda}\}^{\alpha}=m_{2}^{\lambda}$.

\end{thm}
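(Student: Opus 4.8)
The plan is to reduce the determination of each $\alpha$-dual to a known characterization of a matrix class into $\ell_1$, exactly as is done for $c_0^\lambda,c^\lambda,\ell_\infty^\lambda$ in \cite{mn01}. First I would fix an arbitrary $a=(a_n)\in w$ and an arbitrary sequence $x=(x_k)$ lying in the space under consideration, and put $y=\Lambda(x)$. By \eqref{22} we have $x_k=\frac{\lambda_k}{\lambda_k-\lambda_{k-1}}y_k-\frac{\lambda_{k-1}}{\lambda_k-\lambda_{k-1}}y_{k-1}$ for each $k\in\mathbb N$ (using the conventions $\lambda_{-1}=y_{-1}=0$), hence
\[
 a_nx_n=\frac{\lambda_n}{\lambda_n-\lambda_{n-1}}\,a_n y_n-\frac{\lambda_{n-1}}{\lambda_n-\lambda_{n-1}}\,a_n y_{n-1}=\sum_k b_{nk}^{\lambda}y_k=(B^{\lambda}y)_n \qquad (n\in\mathbb N),
\]
the sum on the right being finite because $B^{\lambda}$ is a band matrix (only the entries with $n-1\le k\le n$ can be nonzero). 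Thus, as sequences, $ax=B^{\lambda}y$.

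Next I would invoke Theorem \ref{izomorfizm}: the map $x\mapsto y=\Lambda(x)$ is a bijection from $cs^{\lambda}$ (respectively $cs_0^{\lambda}$, $bs^{\lambda}$) onto $cs$ (respectively $cs_0$, $bs$). Consequently $ax\in\ell_1$ for every $x\in cs^{\lambda}$ if and only if $B^{\lambda}y\in\ell_1$ for every $y\in cs$, i.e.\ if and only if $B^{\lambda}\in(cs:\ell_1)$; and the analogous equivalences hold with $cs_0^{\lambda},cs_0$ and $bs^{\lambda},bs$ in place of $cs^{\lambda},cs$. Recalling that $X^{\alpha}=M(X,\ell_1)$, this yields $\{cs^{\lambda}\}^{\alpha}=\{a\in w: B^{\lambda}\in(cs:\ell_1)\}$, $\{cs_0^{\lambda}\}^{\alpha}=\{a\in w: B^{\lambda}\in(cs_0:\ell_1)\}$ and $\{bs^{\lambda}\}^{\alpha}=\{a\in w: B^{\lambda}\in(bs:\ell_1)\}$.

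It then remains only to translate these memberships into explicit conditions on $a$ via the stated lemmas. By Lemma \ref{cs l}, $B^{\lambda}\in(cs:\ell_1)$ iff $\sup_{N,K\in\mathcal F}\bigl|\sum_{n\in N}\sum_{k\in K}(b_{nk}^{\lambda}-b_{n,k-1}^{\lambda})\bigr|<\infty$, which is precisely the defining condition of $m_1^{\lambda}$; hence $\{cs^{\lambda}\}^{\alpha}=m_1^{\lambda}$. By the lemma characterizing $(cs_0:\ell_1)$, $B^{\lambda}\in(cs_0:\ell_1)$ iff $\sup_{N,K\in\mathcal F}\bigl|\sum_{n\in N}\sum_{k\in K}(b_{nk}^{\lambda}-b_{n,k+1}^{\lambda})\bigr|<\infty$, i.e.\ $a\in m_2^{\lambda}$, so $\{cs_0^{\lambda}\}^{\alpha}=m_2^{\lambda}$. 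Finally, by Lemma \ref{bs l}, $B^{\lambda}\in(bs:\ell_1)$ iff the same condition \eqref{53} holds \emph{and} $\lim_k b_{nk}^{\lambda}=0$ for every $n$; but $b_{nk}^{\lambda}=0$ for all $k>n$, so the latter condition is automatic, and therefore $\{bs^{\lambda}\}^{\alpha}=m_2^{\lambda}$ as well.

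The only point requiring real care is the equivalence in the second paragraph: one must use the \emph{surjectivity} part of Theorem \ref{izomorfizm} to know that every $y\in cs$ (resp.\ $cs_0$, $bs$) is actually of the form $\Lambda(x)$ with $x$ in the corresponding $\lambda$-space, and one must check that $ax=B^{\lambda}y$ holds coordinatewise with all the sums involved being finite so that no question of convergence arises. Once that is in place the result follows by a direct appeal to the cited characterizations of matrix classes into $\ell_1$.
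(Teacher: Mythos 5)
Your proposal is correct and follows essentially the same route as the paper: derive the identity $a_nx_n=B^{\lambda}_n(y)$ from \eqref{22}, use the isomorphism $x\mapsto\Lambda(x)$ to reduce each $\alpha$-dual to the membership $B^{\lambda}\in(cs:\ell_1)$, $(cs_0:\ell_1)$, $(bs:\ell_1)$, and then apply the Stieglitz--Tietz characterizations. Your explicit observation that the extra condition \eqref{55} for the $bs$ case is automatic because $b^{\lambda}_{nk}=0$ for $k>n$ is exactly the point the paper makes (if anything, stated more precisely than in the paper's wording about columns).
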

\begin{proof}Let $a=(a_{n})\in w$. Then, by bearing in mind the relations  (\ref{18}) and (\ref{22}), it is immediate that the equality
\begin{equation}\label{514}
    a_{n}x_{n}=\sum_{k=n-1}^{n}(-1)^{n-k}\frac{\lambda_{k}}{\lambda_{n}-\lambda_{n-1}}{a_{n}y_{k}}=B_{n}^{\lambda}(y)
\end{equation}
holds for all $n\in \mathbb{N}$. We therefore observe by \eqref{514} that $ax=(a_{n}x_{n})\in\ell_{1}$ whenever $x=(x_{k})\in cs^{\lambda}$ if and only if  $B^{\lambda}y\in \ell_{1}$ whenever $y=(y_{k})\in cs$. This means that the sequence $a=(a_{n})\in \{cs^{\lambda}\}^{\alpha}$ if and only if        $B^{\lambda}\in (cs:\ell_{1})$. Hence, we obtain by Lemma \ref{cs l} with $B^{\lambda}$ instead of $A$ that $a=(a_{n})\in \{cs^{\lambda}\}^{\alpha}$ if and only if

\begin{equation*}
    \sup_{N,K\in\mathcal{F}}\left|\sum_{n\in N}\sum_{k\in K}(b_{nk}^{\lambda}-b_{n,k-1}^{\lambda})\right|<\infty
\end{equation*}
which yields the result that $\{cs^{\lambda}\}^{\alpha}=m_{1}^{\lambda}$.

Similarly, we deduce from  Lemma \ref{bs l} with (\ref{514}) that $a=(a_{n})\in \{bs^{\lambda}\}^{\alpha}$ if and only if $B^{\lambda}\in (bs:\ell_{1})$. Then, it is clear that the columns of the matrix $B$ are in the space $c_0$, since
\begin{equation*}
    \lim_{n}b_{nk}^{\lambda}=0
\end{equation*}
for all $k\in \mathbb{N}$. Therefore, we derive from \eqref{53} that
\begin{equation}\label{cs_0 alfa}
\sup_{N,K\in\mathcal{F}}\left|\sum_{n\in N}\sum_{k\in K}(b_{nk}^{\lambda}-b_{n,k+1}^{\lambda})\right|<\infty.
\end{equation}
This shows that $\{cs_0^{\lambda}\}^{\alpha}=\{bs^{\lambda}\}^{\alpha}=m_{2}^{\lambda}$. This completes the proof.
\end{proof}
\begin{thm}\label{311}
Define the sets $m_{3}^{\lambda}$ and  $m_{4}^{\lambda}$ as follows :
\begin{eqnarray*}
  m_{3}^{\lambda}  &=& \left\{a=(a_k)\in w:\sum_{k=0}^{\infty}\left|\bar{\Delta}\left(\bar{\Delta}(\frac{a_{k}}{\lambda_{k}-\lambda_{k-1}}){\lambda_{k}}\right) \right|<\infty\right\}, \\ \\
  m_{4}^{\lambda} &=& \left\{a=(a_k)\in w:\sup_{k}\left|\frac{\lambda_{k}}{\lambda_{k}-\lambda_{k-1}}{a_{k}}\right|<\infty\right\}, \\
  m_{5}^{\lambda}&=& \left\{a=(a_k)\in w:\lim_{k\rightarrow \infty}\left|\frac{\lambda_{k}}{\lambda_{k}-\lambda_{k-1}}{a_{k}}\right|~~\textrm{exists}\right\},
\end{eqnarray*}

where
\begin{equation*}
    \bar{\Delta}\left(\frac{a_{k}}{\lambda_{k}-\lambda_{k-1}}\right)=\frac{a_{k}}{\lambda_{k}-\lambda_{k-1}}-\frac{a_{k+1}}{\lambda_{k+1}-\lambda_{k}}
    \end{equation*}
for all $k\in \mathbb{N}$. Then $\{cs^{\lambda}\}^{\beta}=\{cs_0^{\lambda}\}^{\beta}=m_{3}^{\lambda}\cap m_{4}^{\lambda}$ and $\{bs^{\lambda}\}^{\beta}=m_{3}^{\lambda}\cap m_{5}^{\lambda}$.
\end{thm}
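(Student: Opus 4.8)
\emph{Proof proposal.} The plan is to transport the problem to a matrix--class characterisation by means of the isomorphism $x\mapsto y=\Lambda(x)$ of Theorem~\ref{izomorfizm}. Fix $a=(a_{k})\in w$, let $x\in cs^{\lambda}$ (resp. $cs_{0}^{\lambda}$, $bs^{\lambda}$) and put $y=\Lambda(x)$. Summing the identity \eqref{514} over $n=0,\dots,m$ and interchanging the order of summation, one obtains for every $m\in\mathbb{N}$ that $\sum_{k=0}^{m}a_{k}x_{k}=(Dy)_{m}$, where $D=(d_{mk})$ is the triangle whose entries are the partial column sums $d_{mk}=\sum_{n=0}^{m}b_{nk}^{\lambda}$ of $B^{\lambda}$; an easy evaluation gives $d_{mk}=\lambda_{k}\bar{\Delta}\bigl(a_{k}/(\lambda_{k}-\lambda_{k-1})\bigr)$ for $0\le k\le m-1$, $d_{mm}=\lambda_{m}a_{m}/(\lambda_{m}-\lambda_{m-1})$, and $d_{mk}=0$ for $k>m$. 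Since $y$ runs through all of $cs$ (resp. $cs_{0}$, $bs$) as $x$ runs through $cs^{\lambda}$ (resp. $cs_{0}^{\lambda}$, $bs^{\lambda}$), it follows that $a\in\{cs^{\lambda}\}^{\beta}\Leftrightarrow D\in(cs:c)$, $a\in\{cs_{0}^{\lambda}\}^{\beta}\Leftrightarrow D\in(cs_{0}:c)$, and $a\in\{bs^{\lambda}\}^{\beta}\Leftrightarrow D\in(bs:c)$.

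For the first two duals I would apply Lemma~\ref{cs c} and Lemma~\ref{cs_0 c} to the matrix $D$. Condition \eqref{58} (and the corresponding limit condition of Lemma~\ref{cs_0 c}) holds automatically, because for a fixed column $k$ the entry $d_{mk}$ is independent of $m$ as soon as $m>k$. Hence the entire content is condition \eqref{56}, that is $\sup_{m}\sum_{k}|d_{mk}-d_{m,k+1}|<\infty$; evaluating the differences gives $d_{mk}-d_{m,k+1}=\bar{\Delta}\bigl(\bar{\Delta}(a_{k}/(\lambda_{k}-\lambda_{k-1}))\lambda_{k}\bigr)$ for $0\le k\le m-2$, independently of $m$, while the two boundary terms ($k=m-1$ and $k=m$) are each dominated by a constant multiple of $\sup_{k}|\lambda_{k}a_{k}/(\lambda_{k}-\lambda_{k-1})|$, the term $k=m$ being exactly $\lambda_{m}a_{m}/(\lambda_{m}-\lambda_{m-1})$. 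Thus the supremum is finite precisely when the series defining $m_{3}^{\lambda}$ converges and $\sup_{k}|\lambda_{k}a_{k}/(\lambda_{k}-\lambda_{k-1})|<\infty$, i.e. exactly when $a\in m_{3}^{\lambda}\cap m_{4}^{\lambda}$; this yields $\{cs^{\lambda}\}^{\beta}=\{cs_{0}^{\lambda}\}^{\beta}=m_{3}^{\lambda}\cap m_{4}^{\lambda}$.

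The case of $bs^{\lambda}$ is where the real difficulty lies, and I expect it to be the main obstacle. One applies Lemma~\ref{228} to $D$: conditions \eqref{55} and \eqref{58} again hold automatically ($d_{mk}=0$ for $k>m$, and each column is eventually constant), so the surviving requirement concerns the row sums $\sum_{k}|d_{mk}-d_{m,k-1}|$ together with the convergence built into the class $(bs:c)$. The key point --- and the reason $m_{4}^{\lambda}$ is upgraded to $m_{5}^{\lambda}$ --- is that a sequence $y\in bs$ need not tend to $0$, so the diagonal contribution $d_{mm}y_{m}=\bigl(\lambda_{m}a_{m}/(\lambda_{m}-\lambda_{m-1})\bigr)y_{m}$ can be made to converge for every $y\in bs$ only if $\lim_{k}|\lambda_{k}a_{k}/(\lambda_{k}-\lambda_{k-1})|$ exists; the delicate step is to isolate cleanly the contribution of the entries $d_{m,m-1}$ and $d_{mm}$ when expanding the row sums and to check that, once $m_{3}^{\lambda}$ is granted, the remaining conditions of Lemma~\ref{228} collapse precisely to $m_{5}^{\lambda}$. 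Putting this together gives $\{bs^{\lambda}\}^{\beta}=m_{3}^{\lambda}\cap m_{5}^{\lambda}$, consistently with the inclusions $cs_{0}^{\lambda}\subset cs^{\lambda}\subset bs^{\lambda}$ (which force $\{bs^{\lambda}\}^{\beta}\subset\{cs^{\lambda}\}^{\beta}$) and with $m_{3}^{\lambda}\cap m_{5}^{\lambda}\subset m_{3}^{\lambda}\cap m_{4}^{\lambda}$.
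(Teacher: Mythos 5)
For the spaces $cs^{\lambda}$ and $cs_{0}^{\lambda}$ your argument is essentially the paper's own proof: the triangle $D=(d_{mk})$ you obtain by summing \eqref{514} is exactly the matrix $T^{\lambda}=(t_{nk}^{\lambda})$ of \eqref{T y}, the limit condition \eqref{58} (and its analogue in Lemma \ref{cs_0 c}) is automatic because the columns of $D$ are eventually constant, and condition \eqref{56} is equivalent to $a\in m_{3}^{\lambda}\cap m_{4}^{\lambda}$; this part is correct and coincides with the paper's computation.

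The $bs^{\lambda}$ part, however, contains a genuine gap: you leave exactly the decisive step (that the conditions of Lemma \ref{228} ``collapse precisely to $m_{5}^{\lambda}$'') unverified, and it cannot be verified, because the collapse is not to $m_{5}^{\lambda}$. Write $\hat a_{k}=\bar{\Delta}\bigl(a_{k}/(\lambda_{k}-\lambda_{k-1})\bigr)\lambda_{k}$ and $s_{k}=\lambda_{k}a_{k}/(\lambda_{k}-\lambda_{k-1})$. Abel summation in \eqref{T y} gives, with $Y_{m}=\sum_{n=0}^{m}y_{n}$,
\[
T^{\lambda}_{m}(y)=\sum_{k=0}^{m-2}(\hat a_{k}-\hat a_{k+1})Y_{k}+(\hat a_{m-1}-s_{m})Y_{m-1}+s_{m}Y_{m},
\]
and as $x$ runs through $bs^{\lambda}$ the sequence $Y$ runs through all of $\ell_{\infty}$. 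Once $a\in m_{3}^{\lambda}$ the first sum converges absolutely, and convergence of the two boundary terms for every bounded $Y$ (test with $Y_{m}=(-1)^{m}$ and with indicators of sparse sets) forces $s_{m}\to 0$, not merely that $\lim_{m}|s_{m}|$ exists; indeed your own heuristic about the diagonal term $s_{m}y_{m}$, carried out with $y=((-1)^{m})\in bs$, already yields $s_{m}\to 0$. Concretely, for $\lambda_{k}=k+1$ and $a_{k}=1/(k+1)$ one has $\hat a_{k}=1/(k+2)$ and $s_{k}=1$, so $a\in m_{3}^{\lambda}\cap m_{5}^{\lambda}$, while $x_{k}=(-1)^{k}(2k+1)$ lies in $bs^{\lambda}$ (it is the witness used for the strictness of $cs^{\lambda}\subset bs^{\lambda}$) and $\sum_{k}a_{k}x_{k}=\sum_{k}(-1)^{k}(2k+1)/(k+1)$ diverges. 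Hence $m_{3}^{\lambda}\cap m_{5}^{\lambda}\not\subset\{bs^{\lambda}\}^{\beta}$, and what the method actually produces is $\{bs^{\lambda}\}^{\beta}=m_{3}^{\lambda}\cap\{a:\lim_{k}s_{k}=0\}$. So your plan cannot close the third equality as stated; the paper's own proof is silent on this case (it is dismissed as ``similar''), and your sketch inherits, rather than repairs, this defect.
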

\begin{proof}Because of the proof may also be obtained for the space $bs^{\lambda}$ in the similar way, we omit it. Take any $a=(a_{k})\in w$ and consider the equation
\begin{eqnarray}\label{T y}
\begin{split}
  \sum_{k=0}^{n}a_{k}x_{k} &= \sum_{k=0}^{n}\left[\sum_{j=k-1}^{k}(-1)^{k-j}\frac{\lambda_{j}}{\lambda_{k}-\lambda_{k-1}}{y_{j}}\right]a_{k}\\
   &=\sum_{k=0}^{n-1}\bar{\Delta}\left(\frac{a_{k}}{\lambda_{k}-\lambda_{k-1}}\right)\lambda_{k}y_{k}+\frac{\lambda_{n}}{\lambda_{n}-\lambda_{n-1}}a_{n}y_{n}=T_{n}^{\lambda}(y),
\end{split}
\end{eqnarray}
where the matrix  $T^{\lambda}=(t_{nk}^{\lambda})$ is defined  by
\begin{eqnarray*}
   t_{nk}^{\lambda}=\left\{\begin{array}{ll}
    \bar{\Delta}\left(\frac{a_{k}}{\lambda_{k}-\lambda_{k-1}}\right)\lambda_{k} &~~~~\textrm {if}  ~~0\leq k\leq n-1,\\
    \frac{\lambda_{n}}{\lambda_{n}-\lambda_{n-1}}a_{n} & ~~~~\textrm {if}  ~~k=n,\\
    0 &~~~~\textrm {if}  ~~ k>n.\\
\end{array}
\right.
\end{eqnarray*}
for all $n,~k\in\mathbb{N}$. Thus, we deduce by \eqref{T y} that $ax=(a_{k}x_{k})\in cs$ where $x=(x_{k})\in cs^{\lambda}$ if and only if $T^{\lambda}(y)\in c$ whenever $y=(y_{k})\in cs$. This means that $a=(a_{k})\in \{cs^{\lambda}\}^{\beta}$ if and only if $T^{\lambda}\in (cs:c)$. Therefore, by using Lemma \ref{cs c}, we derive from        \eqref{56} and \eqref{58} that
\begin{equation*}
    \sum_{k=0}^{\infty}\left|\bar{\Delta}\left(\bar{\Delta}(\frac{a_{k}}{\lambda_{k}-\lambda_{k-1}}){\lambda_{k}}\right)\right|<\infty,
\end{equation*}
\begin{equation*}
    \sup_{n}\left|\frac{\lambda_{n}}{\lambda_{n}-\lambda_{n-1}}{a_{n}}\right|<\infty
\end{equation*}
and
\begin{equation*}
   \lim_{n}t_{nk}^{\lambda}=\bar{\Delta}\left(\frac{a_{k}}{\lambda_{k}-\lambda_{k-1}}\right)\lambda_{k},
\end{equation*}
respectively. Thereby, we conclude that $\{cs^{\lambda}\}^{\beta}=\{cs_0^{\lambda}\}^{\beta}=m_{3}^{\lambda}\cap m_{4}^{\lambda}$.
\end{proof}
\begin{thm}\label{gama} The $\gamma$-dual of the space $cs^{\lambda},~cs_0^{\lambda}$ and $bs^{\lambda}$ is the set $m_{3}^{\lambda}\cap m_{4}^{\lambda}$.
\end{thm}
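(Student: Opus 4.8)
The plan is to mirror the structure of the proof of Theorem~\ref{311}, replacing the target space $c$ by $\ell_\infty$ at the appropriate step. First I would take an arbitrary $a=(a_k)\in w$ and observe that the very same identity \eqref{T y} holds verbatim: for $x=(x_k)\in\mu^{\lambda}$ (where $\mu\in\{cs,cs_0,bs\}$) with $y=\Lambda(x)$, one has $\sum_{k=0}^{n}a_kx_k=T_n^{\lambda}(y)$, with $T^{\lambda}=(t_{nk}^{\lambda})$ the triangle defined in the proof of Theorem~\ref{311}. Consequently $a\in\{\mu^{\lambda}\}^{\gamma}$ means exactly that $ax=(a_kx_k)\in bs$ whenever $x\in\mu^{\lambda}$, which by the identity is equivalent to $T^{\lambda}(y)\in\ell_\infty$ whenever $y\in\mu$, i.e.\ to $T^{\lambda}\in(\mu:\ell_\infty)$.

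Next I would invoke the relevant characterization lemmas. For the $cs^{\lambda}$ case, Lemma~\ref{cs gama} gives $T^{\lambda}\in(cs:\ell_\infty)$ iff $\sup_n\sum_k|t_{nk}^{\lambda}-t_{n,k-1}^{\lambda}|<\infty$; unwinding the definition of $t_{nk}^{\lambda}$ (the columns $0\le k\le n-1$ carry $\bar\Delta\!\big(\tfrac{a_k}{\lambda_k-\lambda_{k-1}}\big)\lambda_k$ and the diagonal carries $\tfrac{\lambda_n}{\lambda_n-\lambda_{n-1}}a_n$) this single supremum condition splits, exactly as in Theorem~\ref{311}, into the convergence of $\sum_{k}\big|\bar\Delta\big(\bar\Delta(\tfrac{a_k}{\lambda_k-\lambda_{k-1}})\lambda_k\big)\big|$ together with $\sup_k\big|\tfrac{\lambda_k}{\lambda_k-\lambda_{k-1}}a_k\big|<\infty$, i.e.\ $a\in m_3^{\lambda}\cap m_4^{\lambda}$. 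For $cs_0^{\lambda}$, Lemma~\ref{cs 0 gama} shows $(cs_0:\ell_\infty)$ is governed by condition \eqref{56} alone, which again translates to the same two conditions $m_3^{\lambda}$ and $m_4^{\lambda}$ for $T^{\lambda}$, so $\{cs_0^{\lambda}\}^{\gamma}=m_3^{\lambda}\cap m_4^{\lambda}$. For $bs^{\lambda}$, Lemma~\ref{bs gama} requires \eqref{55} and \eqref{56}; but \eqref{55} for $T^{\lambda}$ reads $\lim_k t_{nk}^{\lambda}=0$ for each $n$, which is automatic since $T^{\lambda}$ is a triangle (finitely many nonzero entries in each row), so only \eqref{56} survives and again yields $m_3^{\lambda}\cap m_4^{\lambda}$. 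Thus all three $\gamma$-duals coincide with $m_3^{\lambda}\cap m_4^{\lambda}$.

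The main obstacle — really the only point requiring care — is the bookkeeping that shows the single matrix condition \eqref{56} (equivalently the condition in Lemma~\ref{cs gama}) applied to $T^{\lambda}$ is equivalent to the pair ``$a\in m_3^{\lambda}$ and $a\in m_4^{\lambda}$'' rather than something weaker or stronger. One must check that the diagonal term $\tfrac{\lambda_n}{\lambda_n-\lambda_{n-1}}a_n$ does not interfere destructively with the off-diagonal differences when forming $\sum_k|t_{nk}^{\lambda}-t_{n,k-1}^{\lambda}|$: splitting the sum at $k=n-1$, $k=n$, and $k=n+1$ isolates the ``inner'' part $\sum_{k\le n-1}|\bar\Delta(\bar\Delta(\tfrac{a_k}{\lambda_k-\lambda_{k-1}})\lambda_k)|$ (whose finiteness uniformly in $n$ is $m_3^{\lambda}$) from the two ``boundary'' terms involving $\tfrac{\lambda_n}{\lambda_n-\lambda_{n-1}}a_n$ and $\bar\Delta(\tfrac{a_{n-1}}{\lambda_{n-1}-\lambda_{n-2}})\lambda_{n-1}$, and one argues that boundedness of these forces (and is forced by) $m_4^{\lambda}$ once $m_3^{\lambda}$ is assumed. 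Since this is precisely the computation already performed inside the proof of Theorem~\ref{311} (the step producing the two displayed conditions from Lemma~\ref{cs c}), I would simply remark that it goes through identically here and omit the repetition. Hence the proof is short: reduce to a matrix-class membership via \eqref{T y}, apply Lemmas~\ref{cs gama}, \ref{cs 0 gama}, \ref{bs gama}, and note that in every case the surviving conditions collapse to $m_3^{\lambda}\cap m_4^{\lambda}$.
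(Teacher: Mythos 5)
Your proposal is correct and is essentially the paper's own argument: the paper proves Theorem~\ref{gama} by exactly this reduction, repeating the identity \eqref{T y} from Theorem~\ref{311} and replacing Lemma~\ref{cs c} by Lemmas~\ref{cs gama}, \ref{cs 0 gama} and \ref{bs gama}. Your additional remarks (the splitting of the row sums into the $m_3^{\lambda}$ and $m_4^{\lambda}$ parts, and the observation that condition \eqref{55} is automatic for the triangle $T^{\lambda}$) are details the paper leaves implicit, but they follow the same lines.
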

\begin{proof}The proof of this result follows the same lines that in the proof of Theorem \ref{311} using   Lemma \ref{cs gama}, Lemma \ref{cs 0 gama} and    Lemma \ref{bs gama} instead of Lemma \ref {cs c}.
\end{proof}

\section{ Certain matrix mappings on the spaces $cs^{\lambda}, cs_0^{\lambda}$  and $bs^{\lambda}$}
In this present section, we characterize the matrix classes $(cs^{\lambda}: \ell_{p}),~(cs_0^{\lambda}: \ell_{p}),~\\(bs^{\lambda}: \ell_{p}),~(cs^{\lambda}:c_0),~(cs_0^{\lambda}:c_0),~(bs^{\lambda}:c_0),~(cs^{\lambda}:c),~(cs_0^{\lambda}:c)$ and $(bs^{\lambda}:c)$, where $1\leq p\leq\infty$.

For an infinite matrix $A=(a_{nk})$, we write for brevity that
\begin{equation*}
    \tilde{a}_{nk}=\bar{\Delta}\left(\frac{a_{nk}}{\lambda_{k}-\lambda_{k-1}}\right){\lambda_{k}}=\left(\frac{a_{nk}}{\lambda_{k}-\lambda_{k-1}}-\frac{a_{n,k+1}}{\lambda_{k+1}-\lambda_{k}}\right)\lambda_{k}~~(n,k\in \mathbb{N}).
\end{equation*}

The following lemmas will be needed in proving our  results.
\begin{lem}\label{cs c_0}$A=(a_{nk})\in (cs:c_0)$ if and only if \eqref{56} holds and
\begin{equation}
    \lim_{n}a_{nk} = 0~~~~(\forall k\in\mathbb{N}).
\end{equation}
\end{lem}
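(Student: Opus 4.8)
The plan is to combine Lemma~\ref{cs c} with the $AK$ property of $cs$. Recall that condition \eqref{56} reads $\sup_{n}\sum_{k}|a_{nk}-a_{n,k+1}|<\infty$, and that $c_{0}\subset c$ gives the inclusion $(cs:c_{0})\subseteq(cs:c)$, so all the hypotheses of Lemma~\ref{cs c} are available whenever $A\in(cs:c_{0})$.

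For necessity, I would argue as follows. If $A=(a_{nk})\in(cs:c_{0})$ then $A\in(cs:c)$, so Lemma~\ref{cs c} already gives that \eqref{56} holds and that $\lim_{n}a_{nk}$ exists for each $k$. To pin down the value of this limit, observe that each $e^{(k)}$ lies in $cs$ (its partial sums are eventually $1$, hence convergent) and that $(Ae^{(k)})_{n}=a_{nk}$; since $Ae^{(k)}\in c_{0}$ by assumption, we conclude $\lim_{n}a_{nk}=0$ for every $k\in\mathbb{N}$.

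For sufficiency, suppose \eqref{56} holds and $\lim_{n}a_{nk}=0$ for all $k$. In particular \eqref{58} holds, so Lemma~\ref{cs c} yields $A\in(cs:c)$ and it only remains to show that $Ax\in c_{0}$ for every $x\in cs$. First I would record that $cs$ has $AK$: writing $s_{m}=\sum_{k=0}^{m}x_{k}$ and $x^{[m]}=\sum_{k=0}^{m}x_{k}e^{(k)}$, one checks at once that $\|x-x^{[m]}\|_{cs}=\sup_{p>m}|s_{p}-s_{m}|\to 0$ as $m\to\infty$, since $(s_{m})$ is convergent, hence Cauchy. Since a matrix transformation between $BK$-spaces is continuous (Wilansky \cite{wilanski}), the map $L_{A}\colon cs\to c$, $x\mapsto Ax$, is bounded, so $Ax^{[m]}\to Ax$ in the sup-norm of $c$. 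On the other hand $Ax^{[m]}=\sum_{k=0}^{m}x_{k}\,Ae^{(k)}$ is a finite linear combination of the columns $Ae^{(k)}=(a_{nk})_{n\in\mathbb{N}}$, each of which belongs to $c_{0}$ by hypothesis, whence $Ax^{[m]}\in c_{0}$. Because $c_{0}$ is closed in $c$, the limit $Ax$ must also lie in $c_{0}$, and therefore $A\in(cs:c_{0})$.

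The routine parts are the necessity and the verification that $cs$ has $AK$; the one point deserving care is the final step, where I pass from $Ax^{[m]}\in c_{0}$ to $Ax\in c_{0}$. This relies on two standard facts — the $AK$ property of $cs$ and the automatic continuity of $L_{A}$ as a map between the $BK$-spaces $cs$ and $c$ — and once these are invoked the conclusion is purely formal. Alternatively one can compute directly that $\lim_{n}(Ax)_{n}=\sum_{k}\bigl(\lim_{n}a_{nk}\bigr)x_{k}=0$ for $x\in cs$ via Abel summation, but that route runs into exactly the same need to justify an interchange of limits, so it offers no real simplification.
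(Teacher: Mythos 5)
Your argument is correct. Note, though, that the paper does not prove this lemma at all: like the other characterizations in Sections 5--6, it is quoted as a known result from Stieglitz--Tietz \cite{st}, so there is no ``paper proof'' to compare against line by line. What you supply is a clean self-contained derivation: necessity from the inclusion $(cs:c_0)\subseteq(cs:c)$ together with Lemma~\ref{cs c} and the observation that $e^{(k)}\in cs$ with $Ae^{(k)}=(a_{nk})_{n}$; sufficiency from Lemma~\ref{cs c} plus the $AK$ property of $cs$ (your computation $\|x-x^{[m]}\|_{cs}=\sup_{p>m}|s_p-s_m|\to 0$ is right), the automatic continuity of the matrix map $L_A\colon cs\to c$ between $BK$-spaces, the fact that $Ax^{[m]}$ is a finite combination of columns lying in $c_0$, and the closedness of $c_0$ in $c$. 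Each of these steps is sound, and together they give exactly the standard proof pattern by which such matrix characterizations into $c_0$ are reduced to the corresponding characterization into $c$ plus null columns; so your proposal both fills a gap the paper leaves to the literature and does so by an argument that is essentially the canonical one.
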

\begin{lem}\label{cs_0 c_0}$A=(a_{nk})\in (cs_0:c_0)$ if and only if \eqref{56} holds and
\begin{equation}
    \lim_{n}(a_{nk}-a_{n,k+1})=0~~~~(\forall k\in \mathbb{N}).
\end{equation}
\end{lem}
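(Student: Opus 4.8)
The plan is to mimic the proof of the companion result Lemma~\ref{cs_0 c}, combining a reduction to matrix classes already recorded in this section with a test against well-chosen members of $cs_0$. It is convenient to note first that, for $x=(x_k)\in cs_0$, the sequence $y=(y_k)$ with $y_k=\sum_{j=0}^{k}x_j$ belongs to $c_0$ and $x_k=y_k-y_{k-1}$, and that each sequence $x^{(k)}:=e^{(k)}-e^{(k+1)}$ lies in $cs_0$ (its partial sums are $0,\dots,0,1,0,0,\dots$, hence null).

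For the necessity I would argue as follows. If $A\in(cs_0:c_0)$ then, since $c_0\subset\ell_\infty$, also $A\in(cs_0:\ell_\infty)$, so Lemma~\ref{cs 0 gama} yields \eqref{56}. Evaluating $A$ at $x^{(k)}$ gives $(Ax^{(k)})_n=a_{nk}-a_{n,k+1}$; since $Ax^{(k)}\in c_0$, this forces $\lim_n(a_{nk}-a_{n,k+1})=0$ for every $k$, which is the second condition.

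For the sufficiency, assume \eqref{56} and $\lim_n(a_{nk}-a_{n,k+1})=0$ for all $k$. In particular these limits exist, so Lemma~\ref{cs_0 c} gives $A\in(cs_0:c)$; hence $Ax\in c$ for every $x\in cs_0$ and $L(x):=\lim_n(Ax)_n$ is a well-defined linear functional on $cs_0$. Since $cs_0$ and $c$ are $BK$-spaces, the matrix map induced by $A$ is continuous from $cs_0$ to $c$ (matrix maps between $FK$-spaces are continuous, see~\cite{wilanski}), and $y\mapsto\lim_n y_n$ is continuous on $c$; so $L$ is continuous. The sequences $x^{(k)}$ form a Schauder basis of $cs_0$ (they are the pre-images of the canonical basis of $c_0$ under the summation triangle, or one checks directly that $\|x-\sum_{k=0}^{m}y_k x^{(k)}\|_{cs_0}=\sup_{n>m}|y_n|\to0$), and $L(x^{(k)})=\lim_n(a_{nk}-a_{n,k+1})=0$ by hypothesis, so $L$ vanishes on a dense subspace and therefore $L\equiv0$. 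That is, $Ax\in c_0$ for all $x\in cs_0$, so $A\in(cs_0:c_0)$. Alternatively, one can bypass the basis argument: \eqref{56} makes $(a_{nk})_k$ of bounded variation for each $n$, whence summation by parts gives $(Ax)_n=\sum_k(a_{nk}-a_{n,k+1})y_k$ (the boundary term $a_{nm}y_m$ tending to $0$ as $m\to\infty$), and then splitting this series at a large index $N$, bounding the tail by $\big(\sup_{k\ge N}|y_k|\big)\sup_n\sum_k|a_{nk}-a_{n,k+1}|$ and letting $n\to\infty$ in the finite initial part shows $(Ax)_n\to0$.

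The step I expect to be the real work is precisely this interchange of limits in the sufficiency: checking that the Abel boundary term vanishes and, above all, that the tail of $\sum_k(a_{nk}-a_{n,k+1})y_k$ is small uniformly in $n$. This is exactly where the uniform bound \eqref{56} is indispensable; without it one controls each row only separately and cannot pass to the limit in $n$. The reduction route packages this difficulty inside Lemma~\ref{cs_0 c}, at the cost of having to invoke the continuity of the induced matrix map on the $BK$-space $cs_0$ together with the totality of the basis $(x^{(k)})$.
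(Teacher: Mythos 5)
Your proof is correct, but note that the paper itself offers no argument to compare it with: Lemma~\ref{cs_0 c_0}, like the other lemmas at the start of this section, is quoted as a known characterization from Stieglitz and Tietz \cite{st}, so your write-up supplies a verification the paper omits. Both halves of your argument are sound. For necessity, the inclusion $c_0\subset\ell_\infty$ together with Lemma~\ref{cs 0 gama} gives \eqref{56}, and the test sequences $x^{(k)}=e^{(k)}-e^{(k+1)}$ do lie in $cs_0$ with $(Ax^{(k)})_n=a_{nk}-a_{n,k+1}$, which forces the columnwise limit condition; this is exactly the standard argument. For sufficiency, your reduction through Lemma~\ref{cs_0 c} plus the continuity of the limit functional $L(x)=\lim_n(Ax)_n$ on the $BK$-space $cs_0$ (matrix maps between $FK$-spaces are continuous, \cite{wilanski}) is legitimate, and your norm computation $\bigl\|x-\sum_{k=0}^{m}y_kx^{(k)}\bigr\|_{cs_0}=\sup_{N>m}|y_N|\to0$ correctly shows that the span of the $x^{(k)}$ is dense, so $L\equiv0$ follows. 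The alternative direct route is equally valid and is closer in spirit to how such results are proved in \cite{st}: \eqref{56} puts each row in $bv=(cs_0)^\beta$, Abel summation (with the boundary term $a_{nm}y_m\to0$ because rows in $bv$ are bounded and $y\in c_0$) gives $(Ax)_n=\sum_k(\,a_{nk}-a_{n,k+1})y_k$, and the uniform bound \eqref{56} controls the tail so that the columnwise limits can be passed to the limit in $n$. You have correctly identified that this uniform tail estimate is where \eqref{56} is indispensable; the basis argument buys brevity by hiding that estimate inside Lemma~\ref{cs_0 c} and general $FK$-space theory, while the direct argument is self-contained and elementary.
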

\begin{lem}$A=(a_{nk})\in (bs:c_0)$ if and only if \eqref{55} holds and
\begin{equation}
\lim_{n} \sum_{k}|a_{nk}-a_{n,k+1}|=0.
\end{equation}
\end{lem}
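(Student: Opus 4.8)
The statement to prove is the characterization of $(bs:c_0)$, namely that $A=(a_{nk})\in(bs:c_0)$ if and only if \eqref{55} holds (that is, $\lim_k a_{nk}=0$ for each $n$) and $\lim_n\sum_k|a_{nk}-a_{n,k+1}|=0$.

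The plan is to reduce this to a known summability-type argument, paralleling the structure of Lemma~\ref{228} (the $(bs:c)$ case) but with the limit replaced by $0$. First I would observe that for $x=(x_k)\in bs$, writing $s_k=\sum_{j=0}^k x_j$ we have $(s_k)\in\ell_\infty$, and by an Abel summation (partial summation) applied to the partial sums $\sum_{k=0}^m a_{nk}x_k$ one obtains
\begin{equation*}
\sum_{k=0}^m a_{nk}x_k=\sum_{k=0}^{m-1}(a_{nk}-a_{n,k+1})s_k+a_{nm}s_m.
\end{equation*}
The condition $\lim_k a_{nk}=0$ together with $\sup_k\sum_k|a_{nk}-a_{n,k+1}|<\infty$ (which I will show is forced, as in the $(bs:\ell_\infty)$ Lemma~\ref{bs gama}) guarantees that the term $a_{nm}s_m\to 0$ as $m\to\infty$ and that the series $\sum_k(a_{nk}-a_{n,k+1})s_k$ converges absolutely, so $(Ax)_n=\sum_k(a_{nk}-a_{n,k+1})s_k$ is well defined for each $n$. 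Thus $A$ acts on $bs$ as the matrix $\tilde A=(a_{nk}-a_{n,k+1})$ acts on $\ell_\infty$ via $x\mapsto (s_k)\mapsto \tilde A s$; since the map $x\mapsto s$ is an isometric isomorphism of $bs$ onto $\ell_\infty$, we get $A\in(bs:c_0)$ if and only if $\tilde A\in(\ell_\infty:c_0)$, provided \eqref{55} holds to make the reduction legitimate.

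Next I would invoke the classical characterization of $(\ell_\infty:c_0)$: a matrix $B=(b_{nk})$ maps $\ell_\infty$ into $c_0$ if and only if $\lim_n\sum_k|b_{nk}|=0$. Applying this with $b_{nk}=a_{nk}-a_{n,k+1}=\tilde a_{nk}/\lambda_k\cdot(\lambda_k-\lambda_{k-1})$—more precisely with $b_{nk}=a_{nk}-a_{n,k+1}$ directly—yields exactly the condition $\lim_n\sum_k|a_{nk}-a_{n,k+1}|=0$. This establishes the equivalence: \eqref{55} is needed for the Abel-summation reduction to $\ell_\infty$ to be valid for every $x\in bs$, and then $\tilde A\in(\ell_\infty:c_0)$ translates precisely into $\lim_n\sum_k|a_{nk}-a_{n,k+1}|=0$. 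For the converse direction, if both conditions hold, the reduction shows $(Ax)_n=\sum_k(a_{nk}-a_{n,k+1})s_k$ exists and $\sup_n\sum_k|a_{nk}-a_{n,k+1}|<\infty$ follows from $\lim_n\sum_k|a_{nk}-a_{n,k+1}|=0$ (a convergent sequence is bounded), so $Ax\in c_0$ by the $(\ell_\infty:c_0)$ criterion.

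The main obstacle, and the step requiring genuine care rather than bookkeeping, is justifying the Abel-summation rearrangement and the vanishing of the boundary term $a_{nm}s_m$ uniformly enough to conclude that the series defining $(Ax)_n$ converges and equals the limit of the partial sums—this is exactly where condition \eqref{55} enters essentially, and one must be careful that it is needed for \emph{each fixed} $n$ (so that $Ax$ is even defined) before passing to the limit in $n$. Once that reduction is in place, the remainder is a direct citation of the standard $(\ell_\infty:c_0)$ characterization, entirely analogous to how Lemma~\ref{228} handles $(bs:c)$ via $(\ell_\infty:c)$; I would present the argument in parallel to that lemma's proof, noting only the modifications caused by replacing ``limit exists'' with ``limit is zero.''
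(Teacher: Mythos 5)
The paper does not actually prove this lemma: it is one of the ``known results'' quoted from Stieglitz--Tietz \cite{st} (as with Lemmas \ref{cs l}--\ref{bs gama}), so there is no internal proof to compare against. Your reduction is the standard way such characterizations are derived and is essentially sound: the map sending $x\in bs$ to its partial-sum sequence $s=(s_m)$ is an isometric isomorphism onto $\ell_\infty$, Abel summation gives $\sum_{k=0}^m a_{nk}x_k=\sum_{k=0}^{m-1}(a_{nk}-a_{n,k+1})s_k+a_{nm}s_m$, and once \eqref{55} and the finiteness of $\sum_k|a_{nk}-a_{n,k+1}|$ for each $n$ are in hand, $(Ax)_n=(Bs)_n$ with $b_{nk}=a_{nk}-a_{n,k+1}$, so $A\in(bs:c_0)$ if and only if $B\in(\ell_\infty:c_0)$, and the classical Schur-type theorem ($B\in(\ell_\infty:c_0)\Leftrightarrow\lim_n\sum_k|b_{nk}|=0$) finishes the argument; note that for sufficiency you do not even need that theorem, only the trivial estimate $|(Ax)_n|\le\|s\|_\infty\sum_k|b_{nk}|$.

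Two points should be tightened. First, the necessity of \eqref{55} and of the row-variation finiteness is asserted rather than derived: make it explicit, either by observing that $(bs:c_0)\subset(bs:\ell_\infty)$ and invoking Lemma~\ref{bs gama}, or directly from the fact that each row of $A$ must lie in the $\beta$-dual of $bs$, which consists exactly of the null sequences of bounded variation (the boundary term $a_{nm}s_m$ with $s_m=(-1)^m$ forces $a_{nm}\to0$). Only after that is the identity $(Ax)_n=(Bs)_n$ available, and then the necessity of $\lim_n\sum_k|a_{nk}-a_{n,k+1}|=0$ follows from the $(\ell_\infty:c_0)$ theorem, whose nontrivial half (the sliding-hump/Schur argument) you cite but do not prove --- acceptable here, since the paper itself only cites \cite{st}, but be aware that this is where the real work is hidden. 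Finally, fix the slips: $\sup_k\sum_k|a_{nk}-a_{n,k+1}|$ should read $\sup_n\sum_k|a_{nk}-a_{n,k+1}|$, and the parenthetical identity relating $a_{nk}-a_{n,k+1}$ to $\tilde a_{nk}$ through the $\lambda_k$'s is incorrect and irrelevant to this lemma, so delete it.
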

\begin{lem}$A=(a_{nk})\in (cs_0:\ell_{p})$ if and only if
\begin{equation}\label{cs lp}
    \sup_{k}\sum_{n}\left|\sum_{k\in K}(a_{nk}-a_{n,k+1})\right|^{p}<\infty~~~~~~(1<p<\infty).
\end{equation}
\end{lem}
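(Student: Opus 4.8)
The plan is to reduce the statement to the classical characterization of the matrix class $(c_0:\ell_p)$ by passing through the summation operator. (Here and below the outer supremum in \eqref{cs lp} is read over all finite sets $K\in\mathcal{F}$.) Recall that $x=(x_k)\in cs_0$ if and only if $s:=Sx\in c_0$, where $S$ is the triangle given by $(Sx)_n=\sum_{k=0}^{n}x_k$, with inverse $(S^{-1}s)_k=s_k-s_{k-1}$ (convention $s_{-1}=0$); moreover $S$ is an isometric isomorphism of the $BK$-space $cs_0$ onto $c_0$, since $\|x\|_{cs_0}=\sup_m|\sum_{k=0}^{m}x_k|=\|Sx\|_\infty$. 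Put $b_{nk}:=a_{nk}-a_{n,k+1}$ and $B:=(b_{nk})$. For $x\in cs_0$ with $s=Sx$, Abel's partial summation gives
\begin{equation*}
\sum_{k=0}^{m}a_{nk}x_k=\sum_{k=0}^{m-1}b_{nk}s_k+a_{nm}s_m\qquad(n,m\in\mathbb{N}),
\end{equation*}
so the row series $(Ax)_n=\sum_k a_{nk}x_k$ converges for every $x\in cs_0$ exactly when $\sum_k b_{nk}s_k$ converges for every $s\in c_0$, in which case $(Ax)_n=(Bs)_n$. Hence $A\in(cs_0:\ell_p)$ if and only if $B\in(c_0:\ell_p)$.

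Next I would invoke (and, if required, reprove) the known description of $(c_0:\ell_p)$ for $1<p<\infty$: $B=(b_{nk})\in(c_0:\ell_p)$ if and only if $\sup_{K\in\mathcal{F}}\sum_n|\sum_{k\in K}b_{nk}|^p<\infty$. For \emph{necessity}, one uses that $c_0$ and $\ell_p$ are $BK$-spaces, so $B$ induces a bounded operator $L_B:c_0\to\ell_p$; testing on the unit-norm vectors $\chi_K=\sum_{k\in K}e^{(k)}\in c_0$ yields $L_B\chi_K=\big(\sum_{k\in K}b_{nk}\big)_n$, whence $\sum_n|\sum_{k\in K}b_{nk}|^p=\|L_B\chi_K\|_p^p\le\|L_B\|^p$ for every $K\in\mathcal{F}$. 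For \emph{sufficiency}, writing $M$ for that finite supremum, one first notes that choosing sign-adapted finite sets $K$ forces $\sum_k|b_{nk}|\le 4M^{1/p}$ uniformly in $n$, so every row series converges absolutely; since the partial sums $\sum_{k=0}^{m}b_{nk}=a_{n0}-a_{n,m+1}$ then converge, each row $(a_{nm})_m$ is bounded and the boundary term $a_{nm}s_m$ in the Abel identity tends to $0$. Finally, approximating $s\in c_0$ by its finitely supported sections and controlling the intervening blocks by the uniform bound over $K\in\mathcal{F}$ (a gliding-hump estimate) gives $\|Bs\|_p\le C\,M^{1/p}\,\|s\|_\infty$ for an absolute constant $C$, so $Bs\in\ell_p$.

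It then remains only to substitute $b_{nk}=a_{nk}-a_{n,k+1}$ back: since $\sum_{k\in K}b_{nk}=\sum_{k\in K}(a_{nk}-a_{n,k+1})$, the criterion $B\in(c_0:\ell_p)$ is precisely condition \eqref{cs lp}, and together with $A\in(cs_0:\ell_p)\iff B\in(c_0:\ell_p)$ this finishes the proof. The only step that is not purely formal is the sufficiency half of the $(c_0:\ell_p)$ characterization — the gliding-hump argument showing that a bound uniform over the finite index sets $K$ already governs $\|Bs\|_p$ for an arbitrary null sequence $s$; the reduction via $S$, the Abel transformation, and the closing substitution are all routine.
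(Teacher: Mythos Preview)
The paper does not prove this lemma; like the parallel lemmas in Section~5 (attributed to \cite{st}), it is simply quoted as a known characterization needed for the later matrix-class theorems. There is therefore no ``paper's proof'' to compare your argument against.

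That said, your approach is the natural one and essentially how such results are obtained. The reduction $A\in(cs_0:\ell_p)\iff B\in(c_0:\ell_p)$ via the isometry $S:cs_0\to c_0$ and Abel summation is correct. The one point you skate over is why, in the \emph{forward} direction of that equivalence, the boundary term $a_{nm}s_m$ must vanish: you only justify this later under the hypothesis that $B$ has $\ell_1$ rows. To close the gap, note that $(a_{nk})_k\in(cs_0)^\beta$ already forces $(a_{nk})_k$ to be bounded --- the induced functional on the $BK$-space $cs_0$ is continuous, and testing it on $e^{(0)}-e^{(j)}\in cs_0$ (norm~$1$) gives $|a_{n0}-a_{nj}|\le C$ for all $j$. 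With that, $(Ax)_n=(Bs)_n$ follows cleanly in both directions.

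Your necessity argument for the $(c_0:\ell_p)$ criterion (test the bounded operator on $\chi_K$) is fine, as is the observation that the condition forces uniformly $\ell_1$ rows. The sufficiency sketch is, as you yourself flag, the substantive step: upgrading control from characteristic sequences $\chi_K$ to arbitrary $s\in c_0$ is more than a one-line gliding hump --- one typically either cites the Stieglitz--Tietz/Maddox result directly, or argues via a dyadic level-set decomposition of $s$ (writing $s$ as a geometric sum of bounded multiples of characteristic sequences). With that ingredient supplied, your proof is complete.
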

\begin{lem}$A=(a_{nk})\in (cs:\ell_{p})$ if and only if
\begin{equation}
    \sup_{k}\sum_{n}\left|\sum_{k\in K}(a_{nk}-a_{n,k-1})\right|^{p}<\infty~~~~~~(1<p<\infty).
\end{equation}
\end{lem}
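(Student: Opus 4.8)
The plan is to reduce the assertion to the already-proved characterisation \eqref{cs lp} of the class $(cs_0:\ell_p)$, supplemented by a single scalar condition on the zeroth column of $A$, and then to repackage the resulting pair of conditions into the one displayed condition by a telescoping of finite sums. The convergence bookkeeping and the reduction itself are routine; what deserves attention is that $(cs:\ell_p)$ and $(cs_0:\ell_p)$ do \emph{not} coincide --- they differ precisely by the column condition --- so the passage from $cs_0$ to $cs$ is not merely a restriction argument.

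The first step is the auxiliary equivalence
\[
A\in(cs:\ell_p)\qquad\Longleftrightarrow\qquad A\in(cs_0:\ell_p)\ \ \text{and}\ \ (a_{n0})_{n\in\mathbb N}\in\ell_p.
\]
For ``$\Rightarrow$'': since $cs_0\subset cs$ we get $A\in(cs_0:\ell_p)$ by restriction, and evaluating $A$ at $e^{(0)}\in cs$ (whose partial sums form the constant sequence $e=(1,1,\dots)$, so $e^{(0)}\notin cs_0$) gives $Ae^{(0)}=(a_{n0})_n\in\ell_p$. For ``$\Leftarrow$'': given $x=(x_k)\in cs$, put $L=\sum_k x_k$; the $k$-th partial sum of $x-Le^{(0)}$ equals $\sum_{j=0}^{k}x_j-L$, which tends to $0$, so $x-Le^{(0)}\in cs_0$. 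Hence $\sum_k a_{nk}x_k=\sum_k a_{nk}(x-Le^{(0)})_k+L\,a_{n0}$ converges for every $n$, and $Ax=A(x-Le^{(0)})+L\,Ae^{(0)}$ is a sum of two sequences in $\ell_p$, hence lies in $\ell_p$.

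By \eqref{cs lp}, the right-hand side of this equivalence is the conjunction of $\sup_{K\in\mathcal F}\sum_{n}\left|\sum_{k\in K}(a_{nk}-a_{n,k+1})\right|^{p}<\infty$ with $\sum_{n}|a_{n0}|^{p}<\infty$, so it remains to show that this conjunction is equivalent to
\[
\sup_{K\in\mathcal F}\sum_{n}\left|\sum_{k\in K}(a_{nk}-a_{n,k-1})\right|^{p}<\infty.
\]
The mechanism is that for a finite $K\subset\mathbb N$ the inner sum $\sum_{k\in K}(a_{nk}-a_{n,k-1})$ telescopes over each maximal block of consecutive integers in $K$: the choice $K=\{0\}$ produces exactly $a_{n0}$ (since $a_{n,-1}=0$), whereas for $K\subseteq\{1,2,\dots\}$ the index shift $j=k-1$ gives $\left|\sum_{k\in K}(a_{nk}-a_{n,k-1})\right|=\left|\sum_{j\in K-1}(a_{nj}-a_{n,j+1})\right|$, with $K-1$ again a finite subset of $\mathbb N$. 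Specialising $K$ to $\{0\}$ and to the finite subsets of $\{1,2,\dots\}$ then yields both members of the conjunction from the single condition, while splitting an arbitrary finite $K$ as $(K\cap\{0\})\cup(K\setminus\{0\})$ and invoking $|u+v|^{p}\le 2^{p-1}(|u|^{p}+|v|^{p})$ yields the single condition from the conjunction. This completes the argument. (The hypothesis $1<p<\infty$ enters only through \eqref{cs lp}; the endpoint cases $p=1$ and $p=\infty$ are Lemma \ref{cs l} and Lemma \ref{cs gama}. The one routine subtlety, over $\mathbb C$, is reading off the absolute convergence of each row of $A$ from such a supremum, obtained by testing against index sets on which a fixed real or imaginary part has constant sign.)
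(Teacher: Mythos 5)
Your argument is correct, but note that the paper itself gives no proof of this lemma at all: it is one of the characterizations imported verbatim from Stieglitz and Tietz \cite{st}, so any derivation is necessarily a different route. What you do is a relative proof: you split $cs=cs_0\oplus\mathbb{C}e^{(0)}$ (partial sums of $e^{(0)}$ are constantly $1$), which correctly yields $A\in(cs:\ell_p)$ if and only if $A\in(cs_0:\ell_p)$ and $(a_{n0})_n\in\ell_p$, and then you identify this conjunction with the stated backward-difference condition via the convention $a_{n,-1}=0$: the set $K=\{0\}$ isolates $a_{n0}$, the shift $K\mapsto K+1$ interchanges the $k-1$ and $k+1$ differences up to sign, and $|u+v|^p\le 2^{p-1}(|u|^p+|v|^p)$ glues the two pieces back together uniformly in $K$; all of these steps check out, and you also (rightly) read the paper's misprinted $\sup_k$ as $\sup_{K\in\mathcal F}$. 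What your route buys is an explicit explanation of exactly how the $(cs:\ell_p)$ and $(cs_0:\ell_p)$ conditions differ -- only through the extra column term $a_{n0}$ created by the convention $a_{n,-1}=0$ -- and a self-contained deduction of this lemma from its neighbour \eqref{cs lp}; its limitation is that it is only as unconditional as that neighbour, which the paper likewise quotes from \cite{st} without proof, so nothing is proved from scratch (the endpoint cases $p=1,\infty$ being Lemma \ref{cs l} and Lemma \ref{cs gama}, as you say). Your closing remark about extracting row-wise absolute convergence over $\mathbb{C}$ by sign-selection of index sets is fine but superfluous in your scheme, since the quoted $cs_0$ lemma already guarantees existence of the transforms.
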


\begin{lem}$A=(a_{nk})\in (bs:\ell_{p})$ if and only if \eqref{55} and \eqref{cs lp} hold.
\end{lem}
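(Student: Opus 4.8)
The plan is to transport the statement to the space $\ell_\infty$ by means of the summation operator and then to invoke the already known characterization of $(c_0:\ell_p)$. Write $\Sigma$ for the triangle of partial sums, so that $x=(x_k)\in bs$ if and only if $s:=\Sigma x=\bigl(\sum_{j=0}^{k}x_j\bigr)_{k\in\mathbb{N}}\in\ell_\infty$; conversely, every $s\in\ell_\infty$ is recovered in this way from $x_k=s_k-s_{k-1}$ (recall the convention $s_{-1}=0$), so $x\mapsto s$ is a bijection of $bs$ onto $\ell_\infty$. For an arbitrary matrix $A=(a_{nk})$, a summation by parts gives for all $n,m\in\mathbb{N}$
\begin{equation*}
\sum_{k=0}^{m}a_{nk}x_k=\sum_{k=0}^{m-1}\hat{a}_{nk}s_k+a_{nm}s_m,\qquad \hat{a}_{nk}:=a_{nk}-a_{n,k+1},
\end{equation*}
and this identity is the engine of the whole argument.

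First I would prove necessity. If $A\in(bs:\ell_p)$ then, for each fixed $n$, the series $\sum_k a_{nk}x_k$ converges for every $x\in bs$, i.e. the $n$-th row $(a_{nk})_k$ belongs to $(bs)^{\beta}$; since this dual consists exactly of the sequences $a=(a_k)$ with $(a_k-a_{k+1})\in\ell_1$ and $\lim_k a_k=0$, we obtain \eqref{55} at once, and the boundary term $a_{nm}s_m$ in the identity above tends to $0$ as $m\to\infty$ (because $s$ is bounded). Hence $(Ax)_n=\sum_k\hat{a}_{nk}s_k$ for all $x\in bs$, so the matrix $\hat{A}=(\hat{a}_{nk})$ maps $\ell_\infty$ into $\ell_p$. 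Now I would use the classical equality $(\ell_\infty:\ell_p)=(c_0:\ell_p)$ together with the known characterization of $(c_0:\ell_p)$, namely $\sup_{K\in\mathcal{F}}\sum_n\bigl|\sum_{k\in K}\hat{a}_{nk}\bigr|^p<\infty$, to conclude that $A$ satisfies \eqref{cs lp}.

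Conversely, assume \eqref{55} and \eqref{cs lp}. The two classical facts just quoted turn \eqref{cs lp} into the assertion $\hat{A}\in(c_0:\ell_p)=(\ell_\infty:\ell_p)$. Given any $x\in bs$, put $s=\Sigma x\in\ell_\infty$; by \eqref{55} the boundary term $a_{nm}s_m$ tends to $0$ as $m\to\infty$, so the identity above shows that $\sum_k a_{nk}x_k$ converges with sum $(\hat{A}s)_n$, and since $\hat{A}s\in\ell_p$ we get $Ax\in\ell_p$. Thus $A\in(bs:\ell_p)$. The endpoint cases $p=1$ and $p=\infty$ are already covered by Lemma~\ref{bs l} and Lemma~\ref{bs gama}, respectively, so only the range $1<p<\infty$ needs the argument above.

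The step I expect to be most delicate is not a computation but the bookkeeping around the boundary term $a_{nm}s_m$: if one prefers to avoid quoting $(bs)^{\beta}$, then in the necessity direction one must rule out by an adversarial choice of a bounded, oscillating $s$ the possibility that $a_{nm}$ converges to a nonzero limit (which would make $\sum_k a_{nk}x_k$ diverge), and in both directions one must be careful that, once \eqref{55} is in force, $\hat{A}$ sends \emph{all} of $\ell_\infty$ into $\ell_p$ and not merely each fixed $s$; it is precisely here that the identity $(\ell_\infty:\ell_p)=(c_0:\ell_p)$ is indispensable.
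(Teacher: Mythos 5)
Your argument is correct, but it should be said up front that the paper offers no proof of this lemma at all: it is quoted (together with the neighbouring lemmas of Sections 5--6) as a known characterization from Stieglitz--Tietz \cite{st}, so there is no internal proof to match yours against. What you supply is a self-contained derivation in the style the paper itself uses for Theorem \ref{t47}: the Abel-summation identity $\sum_{k=0}^{m}a_{nk}x_k=\sum_{k=0}^{m-1}(a_{nk}-a_{n,k+1})s_k+a_{nm}s_m$ transports the problem along the bijection $x\mapsto s=\Sigma x$ of $bs$ onto $\ell_\infty$, after which the classical facts $(bs)^{\beta}=bv_0$ (giving \eqref{55} and the vanishing of the boundary term) and $(c_0:\ell_p)=(\ell_\infty:\ell_p)$ with the finite-subset condition (giving \eqref{cs lp}) finish both directions; this is essentially how such results are established in the summability literature, so your route is sound rather than novel, and it has the merit of making explicit why \eqref{55} is exactly the extra condition distinguishing $(bs:\ell_p)$ from $(cs_0:\ell_p)$. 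Two housekeeping points: your proof still rests on the two quoted classical facts, so it is a reduction to \cite{st}-type results rather than a proof from scratch (which is perfectly acceptable here, and you indicate how the $\beta$-dual step could be done by hand); and you correctly read the paper's condition \eqref{cs lp}, whose displayed $\sup_{k}$ is a typo for $\sup_{K\in\mathcal{F}}$, and correctly note that the endpoint cases $p=1,\infty$ are the separate Lemmas \ref{bs l} and \ref{bs gama}.
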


Now, we give the following results on the matrix transformations.
\begin{thm}\label{t47}
$(i)$ $A=(a_{nk})\in (cs^{\lambda}:\ell_{\infty})$ if and only if

\begin{equation}\label{47}
    \sup_{n}\sum_{k=0}^{\infty}\left|\tilde{a}_{nk}-\tilde{a}_{k-1} \right|<\infty
\end{equation}
and
\begin{equation}\label{48}
    \sup_{k}\left|\frac{\lambda_{k}}{\lambda_{k}-\lambda_{k-1}}{a_{nk}}\right|<\infty.
\end{equation}
$(ii)$ $A=(a_{nk})\in (cs_{0}^{\lambda}:\ell_{\infty})$ if and only if \eqref{48} holds and

\begin{equation}\label{68}
    \sup_{n}\sum_{k=0}^{\infty}\left|\tilde{a}_{nk}-\tilde{a}_{k+1} \right|<\infty.
\end{equation}
$(iii)$ $A=(a_{nk})\in (bs^{\lambda}:\ell_{\infty})$ if and only if  \eqref{68} holds and
\begin{equation}\label{69}
\lim_{k\rightarrow \infty}\left|\frac{\lambda_{k}}{\lambda_{k}-\lambda_{k-1}}{a_{nk}}\right|~~\textrm{exists},
\end{equation}

\begin{equation}\label{416}
\lim_{k}\tilde{a}_{nk}=0
\end{equation}
and
\begin{equation}
    \sup_{n}|a_{n}|<\infty.
\end{equation}
\end{thm}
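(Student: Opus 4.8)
The plan is to reduce each of the three matrix classes to a classical one over $cs$, $cs_0$ or $bs$ by transporting it along the isometric isomorphism $T$ of Theorem~\ref{izomorfizm} and then quoting the relevant characterization among Lemmas~\ref{cs gama}, \ref{cs 0 gama} and \ref{bs gama}. Fix an infinite matrix $A=(a_{nk})$; for a sequence $x=(x_k)$ in $cs^{\lambda}$, $cs_0^{\lambda}$ or $bs^{\lambda}$ put $y=\Lambda(x)$, so that $y$ lies in $cs$, $cs_0$ or $bs$ correspondingly, and recall from \eqref{22} the inversion $x_k=\sum_{j=k-1}^{k}(-1)^{k-j}\frac{\lambda_j}{\lambda_k-\lambda_{k-1}}y_j$. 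Inserting this into $\sum_{k=0}^{m}a_{nk}x_k$ and performing the same summation by parts as in \eqref{T y} yields
\begin{equation*}
\sum_{k=0}^{m}a_{nk}x_k=\sum_{k=0}^{m-1}\tilde a_{nk}\,y_k+\frac{\lambda_m}{\lambda_m-\lambda_{m-1}}a_{nm}\,y_m\qquad(n,m\in\mathbb{N}).
\end{equation*}
So once the last term is shown to vanish as $m\to\infty$, $A$ acts on $x$ as the matrix $\tilde A=(\tilde a_{nk})$ acts on $y$.

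For part $(i)$ I would first argue that $A\in(cs^{\lambda}:\ell_{\infty})$ forces every row $a_n=(a_{nk})_k$ to lie in $\{cs^{\lambda}\}^{\beta}=m_3^{\lambda}\cap m_4^{\lambda}$ by Theorem~\ref{311}; the $m_4^{\lambda}$ membership of the rows is exactly \eqref{48}, and together with $y_m\to0$ (valid since $y\in cs\subset c_0$) it makes the boundary term above tend to $0$. Hence $Ax=\tilde Ay$ for every $x\in cs^{\lambda}$, and since $T$ is onto $cs$ this shows $A\in(cs^{\lambda}:\ell_{\infty})$ if and only if $\tilde A\in(cs:\ell_{\infty})$. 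Lemma~\ref{cs gama} applied to $\tilde A$ then says this is equivalent to $\sup_n\sum_k|\tilde a_{nk}-\tilde a_{n,k-1}|<\infty$, i.e. \eqref{47}; conversely \eqref{47} already implies the $m_3^{\lambda}$ row condition, so \eqref{47} and \eqref{48} together characterize $(cs^{\lambda}:\ell_{\infty})$. Part $(ii)$ follows by the identical argument with $cs_0$ in place of $cs$ (the $\beta$-dual is still $m_3^{\lambda}\cap m_4^{\lambda}$, so \eqref{48} persists), using Lemma~\ref{cs 0 gama} instead of Lemma~\ref{cs gama}, which replaces \eqref{47} by \eqref{68}.

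For part $(iii)$ the same scheme is run with $bs$ and Lemma~\ref{bs gama}, but here is the only genuinely delicate point and the expected main obstacle: a sequence $y\in bs$ need not be null, so the boundary term $\frac{\lambda_m}{\lambda_m-\lambda_{m-1}}a_{nm}y_m$ does not vanish for free. Controlling it is precisely why the $\beta$-dual of $bs^{\lambda}$ is $m_3^{\lambda}\cap m_5^{\lambda}$ (Theorem~\ref{311}), which yields the ``limit exists'' condition \eqref{69}, and it is where the extra boundedness requirements \eqref{416} and $\sup_n|a_n|<\infty$ enter when one passes to the uniform (in $n$) version needed for $Ax$ to stay bounded. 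Once the reduction $Ax=\tilde Ay$ is justified on all admissible $x$, Lemma~\ref{bs gama} applied to $\tilde A$ contributes $\lim_k\tilde a_{nk}=0$, that is \eqref{416}, together with \eqref{68}, and assembling these with the row conditions gives the stated characterization. Apart from this bookkeeping of the boundary term and the verification that \eqref{47} (resp. \eqref{68}) subsumes the row-wise $\beta$-dual conditions, every step is a routine translation through $T$.
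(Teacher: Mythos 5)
Your argument is correct and is essentially the paper's own proof: the same summation-by-parts identity reducing $Ax$ to $\tilde{A}y$, the same appeal to Theorem \ref{311} for the row ($\beta$-dual) conditions guaranteeing existence of $Ax$ and the vanishing of the boundary term, and the same transfer through Lemmas \ref{cs gama}, \ref{cs 0 gama} and \ref{bs gama}. Note that the paper itself writes out only part $(i)$ and omits $(ii)$ and $(iii)$, so your sketch of the boundary-term difficulty for $bs^{\lambda}$ is, if anything, slightly more explicit than the published argument.
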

\begin{proof}Suppose that conditions \eqref{47} and \eqref{48} hold and take any $x=(x_{k})\in cs^{\lambda}.$ Then, we have by Theorem \ref{311} that $(ank)_{k=0}^{\infty}\in (cs^{\lambda})^{\beta}$ for all $n\in \mathbb{N}$ and this implies the existence of the $A$-transform of $x$, i.e.; Ax exists. Further, it is clear that the associated sequence $y=(y_{k})$ is in the $cs$ and hence $y\in c_0.$

Let us now consider the following equality derived by using the relation (\ref{18}) from the  $m^{th}$ partial sum of the series $\sum_{k}a_{nk}x_{k}:$
\begin{equation}\label{kýsmi toplam}
    \sum_{k=0}^{m}a_{nk}x_{k}=\sum_{k=0}^{m-1}\tilde{a}_{nk}y_{k}+\frac{\lambda_{m}}{\lambda_{m}-\lambda_{m-1}}{a_{nm}y_{m}}, ~~(\forall n,m\in \mathbb{N}).
\end{equation}
Therefore, by using  \eqref{47} and \eqref{48}, from \eqref{kýsmi toplam} as $m\rightarrow\infty$ we obtain that equality
\begin{equation}\label{ax}
   \sum_{k}a_{nk}x_{k}=\sum_{k}\tilde{a}_{nk}y_{k}~~\textrm{for all}~~n\in \mathbb{N}.
\end{equation}
Further, since the matrix $\tilde{A}=(\tilde{a}_{nk})$ is in the class $(cs:\ell_{\infty})$ by Lemma  \ref{cs gama} and \eqref{47}; we have $\tilde{A}y\in\ell_{\infty}.$ Therefore, we deduce from \eqref{A-trans} and \eqref{ax} that $Ax\in \ell_{\infty}$ and hence $A\in (cs^{\lambda},\ell_{\infty}).$

Conversely, suppose that $A\in (cs^{\lambda},\ell_{\infty}).$ Then $(a_{nk})_{k=0}^{\infty}\in (cs^{\lambda})^{\beta} $ for all $n\in \mathbb{N}$ and this, with  Theorem \ref{311}, implies both \eqref{48} and
$$ \sum_{k=0}^{\infty}\left|\tilde{a}_{nk}-\tilde{a}_{k+1} \right|<\infty ~~\textrm {for all}~~~~ n\in \mathbb{N}$$
which together imply that relation \eqref{ax} holds for all sequences $x\in cs^{\lambda}$ and $y\in cs.$ Further, since $Ax\in \ell_{\infty}$ by the hipothesis; we obtain by \eqref{ax} that $\tilde{A}y\in \ell_{\infty}$ which shows that $\tilde{A}\in (cs:\ell_{\infty}),$ where
 $\tilde{A}=(\tilde{a}_{nk})$. Hence, the necessity of \eqref{47} is immediate by \eqref {cs gama}. This concludes the proof of part $(i).$

Since part $(ii)$ can be proved similarly, we omit its proof.
\end{proof}
\begin{cor}\label{t48} $(i)$ $A=(a_{nk})\in(cs^{\lambda}:c)$ if and only if \eqref{48}, and \eqref{68} hold and
\begin{equation}\label{614}
    \lim_{n\rightarrow\infty}\tilde{a}_{nk}~~\textrm{exists}.
\end{equation}
$(ii)$ $A=(a_{nk})\in (cs_0^{\lambda}:c)$ if and only if \eqref{48} and \eqref{68} hold and
\begin{equation}
    \lim_{n}(\tilde{a}_{nk}-\tilde{a}_{n,k+1}) ~~~~~\textrm{exists}.
\end{equation}
$(iii)$ $A=(a_{nk})\in (bs^{\lambda}:c)$ if and only if  \eqref{69}, \eqref{416} and \eqref{614}hold and
\begin{equation}
    \lim_{n\rightarrow\infty}\sum_{k}\left|\tilde{a}_{nk}-\tilde{a}_{k-1} \right|~~\textrm{exists},
\end{equation}
\begin{equation}\label{417}
    \lim_{n}a_{n}~~\textrm{exist}.
\end{equation}
\end{cor}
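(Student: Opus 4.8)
The plan is to reduce each of the three matrix classes to a corresponding classical matrix class via the same change-of-variable device used in Theorem \ref{t47}. First I would recall the fundamental identity \eqref{ax}, namely $\sum_{k}a_{nk}x_{k}=\sum_{k}\tilde{a}_{nk}y_{k}$, which is valid for every $x\in cs^{\lambda}$ (respectively $cs_0^{\lambda}$, $bs^{\lambda}$) with associated $y=\Lambda(x)$ in $cs$ (respectively $cs_0$, $bs$), provided the $\beta$-dual conditions of Theorem \ref{311} hold; in each part those $\beta$-dual conditions are exactly the hypotheses that guarantee $(a_{nk})_{k}\in\{cs^{\lambda}\}^{\beta}$ etc., so the $A$-transform exists and the identity is legitimate. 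Consequently $A\in(cs^{\lambda}:c)$ if and only if the matrix $\tilde{A}=(\tilde{a}_{nk})$ belongs to $(cs:c)$, together with the membership requirements $(a_{nk})_{k}\in\{cs^{\lambda}\}^{\beta}$; analogously for the other two spaces with $cs$ replaced by $cs_0$ and $bs$.

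For part $(i)$, I would invoke Lemma \ref{cs c}: $\tilde{A}\in(cs:c)$ iff $\sup_{n}\sum_{k}|\tilde{a}_{nk}-\tilde{a}_{n,k+1}|<\infty$ and $\lim_{n}\tilde{a}_{nk}$ exists for each $k$. The first of these, written out in terms of the original entries, is precisely \eqref{68}, and the second is \eqref{614}. The extra condition \eqref{48} is carried over from Theorem \ref{311} as the part of the $\beta$-dual description ($m_4^{\lambda}$) needed for $Ax$ to exist; note the $m_3^{\lambda}$-part of the $\beta$-dual is subsumed by \eqref{68}. For part $(ii)$ the same argument applies with Lemma \ref{cs_0 c} in place of Lemma \ref{cs c}: $\tilde{A}\in(cs_0:c)$ iff \eqref{56}-type condition \eqref{68} holds and $\lim_{n}(\tilde{a}_{nk}-\tilde{a}_{n,k+1})$ exists for every $k$, which is the displayed condition in $(ii)$; the requirement $(a_{nk})_{k}\in\{cs_0^{\lambda}\}^{\beta}=m_3^{\lambda}\cap m_4^{\lambda}$ again supplies \eqref{48}. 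For part $(iii)$ I would use Lemma \ref{228}, $\tilde{A}\in(bs:c)$ iff $\lim_{k}\tilde{a}_{nk}=0$ for all $n$ (this is \eqref{416}), $\lim_{n}\tilde{a}_{nk}$ exists for all $k$ (this is \eqref{614}), and $\sum_{k}|\tilde{a}_{nk}-\tilde{a}_{n,k-1}|$ converges; together with the $bs^{\lambda}$-specific $\beta$-dual ingredients from Theorem \ref{311}, namely $m_5^{\lambda}$ giving \eqref{69} and the boundedness/limit of $a_n$ giving \eqref{417}, and the remaining $\sup_n\sum_k|\tilde a_{nk}-\tilde a_{n,k-1}|$-type control appearing as a limit.

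The main obstacle I anticipate is bookkeeping rather than conceptual: one must verify carefully that the transition from the classical characterizations (stated for $\tilde A$) back to conditions on the original matrix $A$ introduces the correct index shifts ($k-1$ versus $k+1$) and that the boundary term $\frac{\lambda_m}{\lambda_m-\lambda_{m-1}}a_{nm}y_m$ in \eqref{k�smi toplam} genuinely vanishes as $m\to\infty$ — this is where \eqref{48} (or \eqref{69} in the $bs^{\lambda}$ case) and the fact that $y\in c_0$ are used, exactly as in the proof of Theorem \ref{t47}. One should also double-check in part $(iii)$ that the $bs$-domain forces the auxiliary column-limit and finiteness conditions on $(a_n)$ itself, since $bs$ is not a $cs$-type space and Lemma \ref{228} carries the additional hypothesis \eqref{55}. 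Once these index-matching and boundary-term points are handled, each equivalence follows by applying the cited lemma to $\tilde A$ and translating, so I would present the argument for $(i)$ in full and remark that $(ii)$ and $(iii)$ are obtained in the same way with Lemmas \ref{cs_0 c} and \ref{228} respectively.
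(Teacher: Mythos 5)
Your proposal is correct and follows exactly the route the paper intends: the paper omits the proof, stating that Corollary \ref{t48} ``can be proved similarly with Theorem \ref{t47}'', i.e.\ by using the $\beta$-dual conditions of Theorem \ref{311} to justify the identity \eqref{ax} and then applying the classical characterizations (Lemmas \ref{cs c}, \ref{cs_0 c} and \ref{228}) to the matrix $\tilde{A}=(\tilde{a}_{nk})$, which is precisely your reduction. Your additional attention to the boundary term in \eqref{k�smi toplam} and to the $k-1$ versus $k+1$ index bookkeeping matches the argument given for Theorem \ref{t47}, so no further changes are needed.
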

\begin{cor}\label{t49}$(i)$ $A=(a_{nk})\in (cs^{\lambda}:c_0)$ if and only if \eqref{48} and \eqref{68} hold and

\begin{equation}
   \lim_{n}\tilde{a}_{nk}=0.
\end{equation}
$(ii)$ $A=(a_{nk})\in (cs_0^{\lambda}:c_0)$ if and only if \eqref{48} and  \eqref{68}  hold and
\begin{equation}
    \lim_{n}(\tilde{a}_{nk}-\tilde{a}_{n,k+1})=0.
\end{equation}
$(iii)$ $A=(a_{nk})\in (bs^{\lambda}:c_0)$ if and only if  \eqref{69} and \eqref{416} hold and
\begin{equation}
    \lim_{n}\sum_{k}|\tilde{a}_{nk}-\tilde{a}_{n,k+1}|=0,
\end{equation}
\begin{equation}
   \lim_{n}a_{n}=0.
\end{equation}
\end{cor}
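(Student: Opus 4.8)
The plan is to follow the same pattern established in the proof of Theorem~\ref{t47} and Corollary~\ref{t48}, transferring each matrix class $(cs^{\lambda}:c_0)$, $(cs_0^{\lambda}:c_0)$ and $(bs^{\lambda}:c_0)$ to a statement about an associated matrix $\tilde{A}=(\tilde{a}_{nk})$ acting on the corresponding series space $cs$, $cs_0$ or $bs$, and then invoke Lemma~\ref{cs c_0}, Lemma~\ref{cs_0 c_0} and the $(bs:c_0)$ lemma. First I would fix $A=(a_{nk})$ and observe, exactly as in \eqref{k�smi toplam}--\eqref{ax}, that if the $\beta$-dual conditions from Theorem~\ref{311} (respectively Theorem~\ref{gama} for the $bs^{\lambda}$ case) are in force, then the $A$-transform of any $x\in cs^{\lambda}$ exists and, for the sequence $y=\Lambda(x)$, one has
\begin{equation*}
   \sum_{k}a_{nk}x_{k}=\sum_{k}\tilde{a}_{nk}y_{k}\qquad(n\in\mathbb{N}),
\end{equation*}
where the boundary term $\frac{\lambda_{m}}{\lambda_{m}-\lambda_{m-1}}a_{nm}y_{m}$ vanishes as $m\to\infty$ because $y\in c_0$ and \eqref{48} (or \eqref{69}) holds. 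Thus $Ax\in c_0$ for every $x$ in the $\lambda$-space if and only if $\tilde{A}y\in c_0$ for every $y$ in the corresponding classical series space, i.e.\ $A\in(cs^{\lambda}:c_0)$ iff $\tilde{A}\in(cs:c_0)$, and likewise for the other two.

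Next I would read off the stated conditions by applying the cited lemmas to $\tilde{A}$. For part $(i)$: Lemma~\ref{cs c_0} applied to $\tilde{A}$ gives $\sup_{n}\sum_{k}|\tilde{a}_{nk}-\tilde{a}_{n,k+1}|<\infty$, which is condition~\eqref{68}, together with $\lim_{n}\tilde{a}_{nk}=0$ for every $k$; combined with the $\beta$-dual requirement $(a_{nk})_{k}\in\{cs^{\lambda}\}^{\beta}$ which forces \eqref{48}, this yields precisely the three listed conditions. For part $(ii)$: Lemma~\ref{cs_0 c_0} applied to $\tilde{A}$ gives \eqref{68} and $\lim_{n}(\tilde{a}_{nk}-\tilde{a}_{n,k+1})=0$, again supplemented by \eqref{48}. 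For part $(iii)$: the $(bs:c_0)$ lemma applied to $\tilde{A}$ gives $\lim_{k}\tilde{a}_{nk}=0$ (condition~\eqref{416}) and $\lim_{n}\sum_{k}|\tilde{a}_{nk}-\tilde{a}_{n,k+1}|=0$, while the $\beta$-dual condition for $bs^{\lambda}$ from Theorem~\ref{311} supplies \eqref{69}, and the summability side of the construction forces $\lim_{n}a_{n}=0$; all four listed conditions emerge. In each direction the argument is symmetric: sufficiency uses the displayed identity to push $y\mapsto\tilde{A}y$ into $c_0$, and necessity uses it to pull $Ax\in c_0$ back to $\tilde{A}y\in c_0$, after first noting that finiteness of the $A$-transforms already forces membership of the rows in the relevant $\beta$-dual.

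The only genuinely delicate point — and the step I expect to be the main obstacle — is the justification that the boundary term in the partial-sum identity \eqref{k�smi toplam} really does tend to zero, so that the rearranged identity \eqref{ax} is valid for \emph{all} admissible $x$ simultaneously, not merely formally. For $cs^{\lambda}$ and $cs_0^{\lambda}$ this is immediate since $y\in c_0$ and \eqref{48} bounds $\frac{\lambda_{m}}{\lambda_{m}-\lambda_{m-1}}a_{nm}$, but for $bs^{\lambda}$ the sequence $y$ need only lie in $bs$, so one must use condition~\eqref{69} (existence of $\lim_{k}\frac{\lambda_{k}}{\lambda_{k}-\lambda_{k-1}}a_{nk}$) together with \eqref{416} to control the tail; this is exactly the role played by \eqref{69} and \eqref{416} in the $bs^{\lambda}$ parts of Theorem~\ref{t47} and Corollary~\ref{t48}. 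Once that identity is in hand, everything else is a routine translation through the classical lemmas, so I would state the proof concisely, remarking that it proceeds along the same lines as the proof of Theorem~\ref{t47}, and leave the parallel verifications to the reader.
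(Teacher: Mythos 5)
Your proposal is correct and follows essentially the same route as the paper: the paper omits the proof of this corollary, stating only that it is proved in the same way as Theorem~\ref{t47}, and your argument is exactly that template --- use the partial-sum identity leading to \eqref{ax} (justified by the $\beta$-dual conditions of Theorem~\ref{311}) to show $A\in(cs^{\lambda}:c_0)$, $(cs_0^{\lambda}:c_0)$, $(bs^{\lambda}:c_0)$ iff $\tilde{A}$ lies in $(cs:c_0)$, $(cs_0:c_0)$, $(bs:c_0)$, and then read off the conditions from Lemma~\ref{cs c_0}, Lemma~\ref{cs_0 c_0} and the corresponding $(bs:c_0)$ lemma. Your treatment of the boundary term, including the role of \eqref{69} and \eqref{416} in the $bs^{\lambda}$ case, is in fact more explicit than anything the paper provides.
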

\begin{cor}\label{t410}(i) $A=(a_{nk})\in (cs^{\lambda}:\ell_{1})$ if and only if  \eqref{48} holds and
\begin{equation}\label{624}
\sum_{k}|\tilde{a}_{nk}-\tilde{a}_{n,k+1}|<\infty,
\end{equation}
\begin{equation}\label{421}
\sup_{N,K\in\mathcal{F}}\left|\sum_{n\in N}\sum_{k\in K}(\tilde{a}_{nk}-\tilde{a}_{n,k-1})\right|<\infty.
\end{equation}
$(ii)$ $A=(a_{nk})\in (cs_0^{\lambda}:\ell_{1})$ if and only if \eqref{48} and \eqref{624} hold and
\begin{equation}\label{422}
\sup_{N,K\in\mathcal{F}}\left|\sum_{n\in N}\sum_{k\in K}(\tilde{a}_{nk}-\tilde{a}_{n,k+1})\right|<\infty.
\end{equation}
$(iii)$ $A=(a_{nk})\in (bs^{\lambda}:\ell_{1})$ if and only if   \eqref{69}, \eqref{416}, \eqref{624}, and \eqref{422} hold and
\begin{equation}
   \sum_{n}|a_{n}|<\infty.
\end{equation}
\end{cor}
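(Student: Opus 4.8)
The final statement to prove is Corollary~\ref{t410}, characterizing the matrix classes $(cs^{\lambda}:\ell_{1})$, $(cs_0^{\lambda}:\ell_{1})$ and $(bs^{\lambda}:\ell_{1})$. The plan is to mimic exactly the proof of Theorem~\ref{t47}, replacing the target space $\ell_{\infty}$ by $\ell_{1}$ and invoking Lemma~\ref{cs l}, Lemma~\ref{bs l} and the analogue for $cs_0$ in place of the $\ell_{\infty}$-lemmas. The engine of all three parts is the summation-by-parts identity already established in \eqref{k�smi toplam}: for $x\in cs^{\lambda}$ with associated sequence $y=\Lambda(x)\in cs$ (and $y\in c_0$ in particular), one has $\sum_{k=0}^{m}a_{nk}x_{k}=\sum_{k=0}^{m-1}\tilde{a}_{nk}y_{k}+\frac{\lambda_{m}}{\lambda_{m}-\lambda_{m-1}}a_{nm}y_{m}$, so that under condition \eqref{48} the boundary term vanishes as $m\to\infty$ and we obtain the clean relation \eqref{ax}, namely $\sum_{k}a_{nk}x_{k}=\sum_{k}\tilde{a}_{nk}y_{k}=(\tilde{A}y)_{n}$ for every $n$, where $\tilde{A}=(\tilde{a}_{nk})$.

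First I would treat part $(i)$. Given \eqref{48}, the identity \eqref{ax} shows that $A$ maps $cs^{\lambda}$ into $\ell_{1}$ if and only if $\tilde{A}$ maps $cs$ into $\ell_{1}$, i.e. $A\in(cs^{\lambda}:\ell_{1})\iff\tilde{A}\in(cs:\ell_{1})$, provided the column condition \eqref{48} holds (which is exactly what guarantees $(a_{nk})_{k}\in\{cs^{\lambda}\}^{\beta}$ for each $n$, so that $Ax$ is well defined, by Theorem~\ref{311}). Then I apply Lemma~\ref{cs l} to the matrix $\tilde{A}$: the condition $\sup_{N,K\in\mathcal{F}}\bigl|\sum_{n\in N}\sum_{k\in K}(\tilde{a}_{nk}-\tilde{a}_{n,k-1})\bigr|<\infty$ is precisely \eqref{421}. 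The extra condition \eqref{624} arises because to even speak of $\tilde{A}\in(cs:\ell_1)$ and to ensure the series $\sum_k \tilde a_{nk}y_k$ converges for each fixed $n$ when $y\in cs$ (equivalently $(\tilde a_{nk})_k\in cs^{\beta}$), one needs $\sum_k|\tilde a_{nk}-\tilde a_{n,k+1}|<\infty$; I would note this explicitly when justifying that \eqref{ax} is meaningful. For the converse direction, if $A\in(cs^{\lambda}:\ell_{1})$ then $(a_{nk})_{k}\in\{cs^{\lambda}\}^{\beta}$ for all $n$ forces \eqref{48} by Theorem~\ref{311}, hence \eqref{ax} holds, hence $\tilde{A}y\in\ell_{1}$ for all $y\in cs$, i.e. $\tilde{A}\in(cs:\ell_{1})$, and Lemma~\ref{cs l} returns \eqref{421}.

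For part $(ii)$ the argument is identical with $cs$ replaced by $cs_0$ throughout: the characterization of $\{cs_0^{\lambda}\}^{\beta}=m_3^{\lambda}\cap m_4^{\lambda}$ from Theorem~\ref{311} again gives \eqref{48} as the column condition, and the lemma for $(cs_0:\ell_1)$ (condition \eqref{53} with $\tilde a$ in place of $a$) yields \eqref{422}. For part $(iii)$, the passage from $bs^{\lambda}$ to $bs$ uses the $\beta$-dual $\{bs^{\lambda}\}^{\beta}=m_3^{\lambda}\cap m_5^{\lambda}$, which replaces the boundedness condition \eqref{48} by the existence condition \eqref{69}, and additionally requires \eqref{416} (the columns of $\tilde A$ tend to $0$) and $\sum_{n}|a_{n}|<\infty$; these are exactly the hypotheses \eqref{55} plus \eqref{53} appearing in Lemma~\ref{bs l} once transcribed through $\tilde A$ and through the relation $a_n = \Lambda_n$-boundary data. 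The condition $\sum_n|a_n|<\infty$ comes from the leftover boundary behavior in \eqref{k�smi toplam}: since for $bs^{\lambda}$ the sequence $y$ need not be null, the term $\frac{\lambda_m}{\lambda_m-\lambda_{m-1}}a_{nm}y_m$ does not simply disappear, and controlling the $\ell_1$-norm in $n$ of the full expression forces summability of $(a_n)_n$.

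The main obstacle, and the step deserving the most care, is part $(iii)$: unlike the $cs^{\lambda}$ and $cs_0^{\lambda}$ cases, where $y\in c_0$ kills the boundary term and the reduction $A\in(cs^{\lambda}:\ell_1)\iff\tilde A\in(cs:\ell_1)$ is clean, for $bs^{\lambda}$ one must handle the extra terms coming from the non-vanishing tail of $y\in bs$. I would address this by a careful limiting argument in \eqref{k�smi toplam} using \eqref{69} to show the boundary contribution converges, splitting the resulting series into the $\tilde A$-part and a remainder governed by $(a_n)$, and then applying Lemma~\ref{bs l} (conditions \eqref{55} and \eqref{53}) to $\tilde A$ together with the summability of $(a_n)$ to conclude. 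Everything else is routine transcription of the $\ell_\infty$ proof with the appropriate Köthe–Toeplitz lemma substituted.
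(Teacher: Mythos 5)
Your proposal is correct and takes essentially the same route the paper intends: the paper omits the proof of this corollary, stating only that it is proved exactly like Theorem~\ref{t47}, i.e., by using Theorem~\ref{311} (conditions \eqref{48}/\eqref{69} and \eqref{624} ensuring the rows lie in the relevant $\beta$-dual), passing through the summation-by-parts identity to the relation \eqref{ax} so that membership reduces to $\tilde{A}\in(cs:\ell_1)$, $(cs_0:\ell_1)$, $(bs:\ell_1)$, and then invoking Lemmas~\ref{cs l}--\ref{bs l}. Your extra care with the non-vanishing boundary term in the $bs^{\lambda}$ case, which produces the condition $\sum_n|a_n|<\infty$, is exactly the adaptation the paper's omitted proof requires.
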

\begin{cor}\label{t411}$(i)$ $A=(a_{nk})\in (cs^{\lambda}:\ell_{p})$ if and only if   \eqref{48} and \eqref{624} hold and
\begin{equation}\label{625}
    \sup_{K\in\mathcal{F}}\sum_{n}\left| \sum_{k\in K}(\tilde{a}_{nk}-\tilde{a}_{n,k-1})\right|^{p}<\infty.
\end{equation}
$(ii)$ $A=(a_{nk})\in (cs_0^{\lambda}:\ell_{p})$ if and only if \eqref{48} and \eqref{624} hold and
\begin{equation}\label{627}
    \sup_{K\in\mathcal{F}}\sum_{n}\left|\sum_{k\in K}(\tilde{a}_{nk}-\tilde{a}_{n,k+1})\right|^{p}<\infty.
\end{equation}
$(iii)$ $A=(a_{nk})\in (bs^{\lambda}:\ell_{p})$ if and only if \eqref{69}, \eqref{416},  \eqref{624} and \eqref{627} hold and
\begin{equation}
   \sum_{n}|a_{n}|^{p}<\infty.
\end{equation}
\end{cor}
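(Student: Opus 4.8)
The plan is to argue exactly as in the proofs of Theorem~\ref{t47} and Corollary~\ref{t48}: one transfers a matrix mapping out of $cs^\lambda$, $cs_0^\lambda$ or $bs^\lambda$ to a matrix mapping out of $cs$, $cs_0$ or $bs$ by passing through the triangle $\Lambda$, and then invokes the three lemmas above characterizing $(cs:\ell_p)$, $(cs_0:\ell_p)$ and $(bs:\ell_p)$ for $1<p<\infty$ (the endpoint cases $p=1$ and $p=\infty$ being Corollary~\ref{t410} and Theorem~\ref{t47}, respectively). Throughout, $\tilde{A}=(\tilde{a}_{nk})$ denotes the matrix associated to $A=(a_{nk})$ as defined at the start of this section.

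For part $(i)$, fix $A=(a_{nk})$ and first suppose \eqref{48} and \eqref{624} hold. For $x=(x_k)\in cs^\lambda$ put $y=\Lambda(x)$, so $y\in cs\subset c_0$. By Theorem~\ref{311} the hypotheses \eqref{48} and \eqref{624} say precisely that $(a_{nk})_{k\in\mathbb{N}}\in\{cs^\lambda\}^\beta$ for every $n$, so $Ax$ is defined; letting $m\to\infty$ in the identity for the $m$th partial sum of $\sum_k a_{nk}x_k$ used in the proof of Theorem~\ref{t47} — the boundary term $\frac{\lambda_m}{\lambda_m-\lambda_{m-1}}a_{nm}y_m$ tending to $0$ since $y_m\to 0$ and \eqref{48} holds — yields relation \eqref{ax}, i.e.\ $Ax=\tilde{A}y$. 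Hence $A\in(cs^\lambda:\ell_p)$ if and only if $\tilde{A}\in(cs:\ell_p)$, and the lemma for $(cs:\ell_p)$ applied to $\tilde{A}$ is exactly condition \eqref{625}. Conversely, if $A\in(cs^\lambda:\ell_p)$ then each row lies in $\{cs^\lambda\}^\beta$, which by Theorem~\ref{311} forces \eqref{48} and \eqref{624}; then \eqref{ax} holds, $\tilde{A}\in(cs:\ell_p)$, and \eqref{625} follows. Part $(ii)$ is identical with $cs_0$ in place of $cs$; since the $(cs_0:\ell_p)$ lemma uses the forward difference $a_{nk}-a_{n,k+1}$, the shifted condition \eqref{627} appears instead of \eqref{625}.

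For part $(iii)$ one takes $y=\Lambda(x)\in bs$; this is the delicate case, because $y$ need no longer be null, so the boundary term $\frac{\lambda_m}{\lambda_m-\lambda_{m-1}}a_{nm}y_m$ in the partial-sum identity cannot simply be discarded. The remedy is the Abel summation already used in the earlier $bs^\lambda$ arguments: writing $z_n=\sum_{k=0}^n y_k$ (a bounded sequence) one rewrites $\sum_{k=0}^{m}\tilde{a}_{nk}y_k$ as $\sum_{k=0}^{m-1}(\tilde{a}_{nk}-\tilde{a}_{n,k+1})z_k+\tilde{a}_{nm}z_m$, whose limit exists once \eqref{416} and \eqref{624} hold, while the requirement that each row $(a_{nk})_k$ belong to $\{bs^\lambda\}^\beta=m_3^\lambda\cap m_5^\lambda$ (Theorem~\ref{311}) amounts to \eqref{624} together with \eqref{69}. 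With \eqref{ax} thus in force, $A\in(bs^\lambda:\ell_p)$ if and only if $\tilde{A}\in(bs:\ell_p)$; the $(bs:\ell_p)$ lemma then returns \eqref{416} and \eqref{627}, while the extra row condition $\sum_n|a_n|^p<\infty$ — with $a_n=\lim_k\frac{\lambda_k}{\lambda_k-\lambda_{k-1}}a_{nk}$ — arises from the columns of the reduced matrix just as $\sup_n|a_n|<\infty$ and $\lim_n a_n$ did in Theorem~\ref{t47}$(iii)$ and Corollary~\ref{t48}$(iii)$. The only point that genuinely requires care beyond bookkeeping is this $bs^\lambda$ Abel-summation step; everywhere else the argument is a verbatim transcription of the proof of Theorem~\ref{t47}.
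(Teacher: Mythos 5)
For parts (i) and (ii) your argument is exactly what the paper intends: the paper in fact gives no proof of this corollary (it only remarks that it ``can be proved similarly with Theorem \ref{t47}''), and your row-wise use of Theorem \ref{311} to get \eqref{48} and \eqref{624}, the passage to relation \eqref{ax} with $y=\Lambda(x)\in cs$ (resp.\ $cs_0$), and the application of the lemmas characterizing $(cs:\ell_p)$ and $(cs_0:\ell_p)$ to $\tilde{A}$ reproduce the proof of Theorem \ref{t47} verbatim, yielding \eqref{625} and \eqref{627}. That part is fine.

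Part (iii), however, has a genuine gap. After the Abel-summation remark you declare that \eqref{ax} is ``in force'' and conclude that $A\in(bs^{\lambda}:\ell_p)$ if and only if $\tilde{A}\in(bs:\ell_p)$. But \eqref{ax} is obtained from the partial-sum identity of Theorem \ref{t47} by discarding the boundary term $\frac{\lambda_{m}}{\lambda_{m}-\lambda_{m-1}}a_{nm}y_{m}$, and for $x\in bs^{\lambda}$ the sequence $y=\Lambda(x)\in bs$ is in general not null while, by \eqref{69}, the coefficient tends to $a_{n}$; hence this term does not vanish and \eqref{ax} fails for general $x\in bs^{\lambda}$. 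The correct limiting representation, after writing $y_{k}=z_{k}-z_{k-1}$ with $z$ the bounded sequence of partial sums of $y$, consists of the main term $\sum_{k}(\tilde{a}_{nk}-\tilde{a}_{n,k+1})z_{k}$ plus a residual term carried by $a_{n}=\lim_{k}\frac{\lambda_{k}}{\lambda_{k}-\lambda_{k-1}}a_{nk}$, and it is precisely this residual that generates the extra condition $\sum_{n}|a_{n}|^{p}<\infty$. Your own write-up exposes the inconsistency: if $A\in(bs^{\lambda}:\ell_p)$ were literally equivalent to $\tilde{A}\in(bs:\ell_p)$, the condition on $(a_{n})$ would be superfluous, yet you append it ``from the columns of the reduced matrix'' without any derivation. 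To close the gap you must keep the exact two-term representation of $(Ax)_{n}$, apply the $(bs:\ell_p)$-type lemma to the difference part (giving \eqref{416} and \eqref{627} alongside \eqref{624}), and then isolate the residual sequence $(a_{n})$ to see why an $\ell_p$-condition on it is both necessary and sufficient; the same care is needed (and is likewise omitted in the paper) in the $bs^{\lambda}$ parts of Theorem \ref{t47} and Corollaries \ref{t48}--\ref{t410}.
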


Since Corollary \ref{t48}, Corollary \ref{t49}, Corollary \ref{t410} and Corollay \ref{t411} can be  proved similarly with  Theorem \ref{t47}, we omit their proofs.

\section{Conclusion}
In the literature, the approach of constructing a new sequence space by means of the matrix domain of a particular limitation method has recently been employed by several authors, for example, $[17-35]$.
They introduced the sequence spaces $(\ell_{\infty})_{N_q}$ and $(c_{N_q}$ in \cite{wang}, $(\ell_p)_{C_1}=X_p$  and $(\ell_{\infty})_{C_1}=X_{\infty}$ in \cite{lee}, $\mu_G=Z(u, \nu; \mu)$ in \cite{emalsavas}, $(\ell_{\infty})_{R^t}=r_{\infty}^t,~c_{R^t}=r_{c}^t $ and  $(c_0)_{R^t}=r_{0}^t $ in \cite{emal},
$(\ell_{p})_{R^t}=r_{p}^t$ in \cite{ab1R}, $(c_0)_{E^r}=e_0^r$ and $c_{E^r}=e_c^r$ in \cite{ab1E}, $(\ell_{p})_{E^r}=e_{p}^r$ and $(\ell_{\infty})_{E^r}=e_{\infty}^r$ in \cite{ab1Em}, $(c_0)_{A^r}=a_0^r$ and $c_{A^r}=a_c^r$ in \cite{cafb1}, $[(c_0)(u, p)]_{A^r}=a_0^r(u,p)$ and $[c(u, p)]_{A^r}=a_c^r(u,p)$ in \cite{cafb2}, $e_0^r(\Delta: p)=(c_0(p))_{E^r\Delta}$, $e^r(\Delta: p)=(c(p))_{E^r\Delta}$ and $e_{\infty}^r(\Delta: p)=(\ell_{\infty}(p))_{E^r\Delta}$ in \cite{vatanh}, $(\ell_p)_{A^r}=a_p^r$ and $(\ell_{\infty})_{A^r}=a_{\infty}^r$ in \cite{cafb3}, $(c_0)_{C_1}=\tilde{c_0}$ and $c_{C_1}=\tilde{c}$ in \cite{msfb1}, $(f_0)_{C_1}=\tilde{f_0}$ and $f_{C_1}=\tilde{f}$ in \cite{mskk1}, $(f_0)_{R^t}=\hat{f_0}$ and $f_{R^t}=\hat{f}$ in  \cite{mskk2}, $\{\ell_{\infty}\}_{B(r,s)}=\hat{\ell}_{\infty}$, $\{c_{B(r,s)}=\hat{c}$, $\{c_0\}_{B(r,s)}=\hat{c_0}$ and $\{\ell_p\}_{B(r,s)}=\hat{\ell}_{p}$ in  \cite{kiris}, $f_{B(r,s)}=\hat{f}$, $\{f_0\}_{B(r,s)}=\hat{f_0}$ in \cite{kiris1}, $\nu_{B(r,s,t)}=\nu(B)$ in \cite{cabbar}, and $f_{B(r,s,t)}=f(B)$ in \cite{cabbar2}; where $N_q,~C_1,~R^t,~$and $E^r$ denote N\"{o}rlund, Ces\`{a}ro, Riesz, and Euler means, respectively. $A^r, G, B(r,s)$ and $B(r,s,t)$ are , respectively, defined in \cite{cafb1, emalsavas, kiris, cabbar},$\mu\in\{c_0,~ c,~\ell_p\},~\nu\in\{\ell_{\infty},~c_0,~ c,~\ell_p\}$ and $1\leq p<\infty.$ Also $c_o(u, p)$ and $c(u,p)$ denote sequence spaces genarated from the Maddox's spaces $c_0(p)$ and $c(p)$ by Ba\c sar{\i}r \cite{basarir}.

Quite recently, Mursaleen and Noman have introduced the sequence spaces $\ell_{\infty}^{\lambda}, c^{\lambda}, c_0^{\lambda}$ and $\ell_{p}^{\lambda}$ in the case $1\leq p \leq\infty$ in \cite{mn01, mn04I}, respectively. Although the matrix $\Lambda$ is used for obtaining some new sequence spaces by its domain from the classical sequence spaces, the triangle matrix $\Lambda$ over the sequence spaces $cs,~cs_0$ and $bs$ is not studied. So, working the domain of $\Lambda$ matrix in the spaces $cs,~cs_0$ and $bs$ is meaningful, which is filling up a gap in the existing literature.

Finally, the domain of  difference matrix in the $cs,cs_{0}, bs$ and $cs^{\lambda}, cs_{0}^{\lambda}, bs^{\lambda}$ was not studied. We conclude our work by expressing from now on that the aim of our next papers is to investigate the difference spaces $cs^{\lambda}(\Delta),~ cs_{0}^{\lambda}(\Delta)$ and  $bs^{\lambda}(\Delta).$


\begin{thebibliography}{99}

\bibitem{kuldary}
B. Choudhary, S. Nanda,
\textit{Functional Analysis with Applications}, John Wiley $\&$ Sons, New Delhi, India 1989.
\bibitem{Moriz} F. M\'{o}ricz,
\textit{On  $\Lambda$--strong convergence of numerical sequences and Fourier series}, Acta Mathematica Hungarica, \textbf{54}(3-4)(1989),
319--327.

\bibitem{mn01}
M. Mursaleen,  A.K. Noman, \textit{On the spaces of $\lambda$-convergent and bounded sequences}, Thai J. Math. \textbf{8}(2)(2010), 311--329.
\bibitem{mn03}
M. Mursaleen,  A.K. Noman, \textit{Applications of the Hausdorff measure of noncompactness in some sequence spaces of weighted means}, Comput. Math. Appl. \textbf{60}(5)(2010), 1245--1258.

\bibitem{mn04I}M. Mursaleen,  A. K. Noman,
\textit{On some new sequence  spaces of non-absolute type related to the spaces $\ell_p$ and $\ell_{\infty}$ I}, Faculty of sciences and Math. Univ. of Ni$\check{s}$
\textbf{25}(2)(2011), 33--51.

\bibitem{mn04II}
M. Mursaleen,  A.K. Noman, \textit{On some new sequence spaces of non-absolute type related to the spaces $\ell_{p}$ and $\ell_{\infty}$ II}, Math. Commun. \textbf{16}(2011), 383--398.

\bibitem{mn02}
M. Mursaleen,  A.K. Noman, \textit{On some new difference sequence spaces of non-absolute type}, Math. Comput. Modelling \textbf{52}(3-4)(2010), 603--617.

\bibitem{vatan}
V. Karakaya,  A.K. Noman, H. Polat, \textit{On paranormed $\lambda-$ sequence spaces of non-absolute type}, Math. Comput. Modelling \textbf{54}(2011), 1473--1480.

\bibitem{acfb} A. S\"onmez,  F. Ba\c sar,
\textit{ Generalized difference spaces of  non-absolute type of convergent and null sequences}, Abstract and Applied Analysis 2012: Article ID 435076,20pages, doi:10.1155/2012/435076.

\bibitem{hamit}
Ab.H. Ganie, N. A. Sheikh, \textit{On some new sequence spaces of non-absolute type and matrix transformations}, J. Egyptian Math. Society, \textbf{21}(2013), 108--114.

\bibitem{hamit1}

N. A. Sheikh, Ab.H. Ganie, \textit{On spaces of $\lambda-$ convergent sequences and almost convergence}, Thai J. Math., \textbf{11}(2)(2013), 393–-398.

\bibitem{myfb}  M. Ye\c silkayagil, F. Ba\c sar,
\textit{On the fine  spectrum of the operator defined by lambda matrix over the spaces of null and convergent sequences}, Abstract and Applied Analysis 2013: Article ID687393,13pages,http://dx.doi.org/10.1155/2013/687393.

\bibitem{wilanski} A. Wilansky, {\em Summability Throught Functional Analysis}, vol. 85 of North-Holland Mathematics Studies, North-Holland, Oxford, UK, 1984.

\bibitem{malkowski} A. M.Jarrah, E. Malkowsky, {\em Ordinary, absolute and strong summability and  matrix transformations}, FILOMAT \textbf{17}(2003), 59--78.

\bibitem{malkowskiv} E. Malkowsky, V. Rako\u{c}evi\'{c}, {\em On matrix domains of triangles}, App. Math. and Comp. \textbf{189}(2007), 1146--1168.

\bibitem{st} M. Stieglitz, H. Tietz, {\em Matrixtransformationen von Folgenr\"{a}umen Eine Ergebnis\"ubersict}, Math. Z., 154(1977), 1--16.

\bibitem{wang} C. S. Wang, {\em On N\"{o}rlund sequence spaces}, Tamkang J. Math. 9(1978), 269--274.

\bibitem{lee} P. N. Ng, P. Y. Lee, {\em Ces\`{a} ro sequence spaces of non-absolute type}, Comment. Math. Prace Mat. 20(\textbf{2})(1978), 429--433.

\bibitem{emalsavas} E. Malkowsky, E. Sava\c s, {\em Matrix transformations between sequence spaces of generalized weighted means}, Appl. Math. Comput.
\textbf{147}(2004), 333--345 .
\bibitem{emal} E. Malkowsky, {\em Recent results in the theory of matrix transformations in sequence spaces}, mat. Vesnik, \textbf{49}(1997), 187--196.

\bibitem{ab1R} B. Altay, F. Ba\c{s}ar,{\em On the paranormed Riesz  sequence spaces of non-absolute type}, Southeast Asian Bull. Math., \textbf{26}(5)(2003), 701 --715.
\bibitem{ab1E} B. Altay, F. Ba\c{s}ar,{\em Some Euler sequence spaces of non-absolute type}, Ukrainian Math. j., \textbf{57}(1)(2005), 1--17.

\bibitem{ab1Em} B. Altay, F. Ba\c{s}ar, M. Mursaleen,{\em On the  Euler sequence spaces which include the spaces $\ell_p$ and $\ell_{\infty}$ I}, Information Sciences, \textbf{176}(10)(2006), 1450--1462.

\bibitem{cafb1} C. Ayd{\i}n, F. Ba\c{s}ar,{\em On the  new  sequence spaces which include the spaces $c_0$ and $c$ }, Hokkaido math. J.,\textbf{33}(2)(2004), 383--398.

\bibitem{cafb2} V. Karakaya, H. Polat,{\em Some  new paranormed sequence spaces defined by Euler and difference operators }, Acta Sci. Math. (Szeged) ,\textbf{76}(1-4)(2010), 877--100.

\bibitem{vatanh} C. Ayd{\i}n, F. Ba\c{s}ar,{\em Some  new paranormed sequence spaces }, Information Science,\textbf{160}(1-4)(2004), 27--40.

\bibitem{cafb3} C. Ayd{\i}n, F. Ba\c{s}ar,{\em Some  new paranormed sequence spaces which include the spaces $\ell_p$ and $\ell_{\infty}$  }, Demonstratio Mathematica,\textbf{38}(3)(2005), 641--656.

\bibitem{msfb1} M. \c Seng\"on\"ul, F. Ba\c{s}ar,{\em Some  new Ces\`{a}ro sequence spaces of non-absolute type which include the spaces $c_0$ and $c$  }, Soochow j. Math.,\textbf{31}(1)(2005), 107--119.

\bibitem{mskk1}
K.Kayaduman,  M.  \c Seng\"on\"ul, \textit{The spaces of Ces\`{a}ro almost convergent sequences and core theorems}, Acta Math. Sci., Ser. B,
Engl. Ed. \textbf{32}(6)(2012), 2265--2278.

\bibitem{mskk2}
M.  \c Seng\"on\"ul, K.Kayaduman,   \textit{On the Riesz almost convergent sequence spaces }, Abstract and Applied Analysis 2012: Article ID691694,18pages,http://dx.doi.org/10.1155/2012/691694.

\bibitem{kiris} M. Kiri\c s\c ci, F.  Ba\c sar,  \textit{Some new sequence spaces derived by the domain of genaralized difference matrix
}, Comput. Math. Appl.(2011) \textbf{60}, 1299--1309.

\bibitem{kiris1} F. Ba\c sar, M. Kiri\c s\c ci ,  \textit{Almost convergence and
generalized difference matrix}, Comput. Math. Appl. \textbf{61}(3)(2011), 602--611.

\bibitem{cabbar} A. S\"onmez, \textit{Some new sequence spaces derived by
the domain of the triple band matrix}, Comput. Mat. Appl. \textbf{62}(2)(2011), 641--650.

\bibitem{cabbar2} A. S\"onmez,A  \textit{Almost convergence and triple band matrix}, Math. Comput. Model. \textbf{57}(2013), 2393--2402 .

\bibitem{basarir} M. Ba\c sar{\i}r,  \textit{On some new sequence spaces and related matrix transformation}, Indian J. Pure and App.Math., \textbf{26}(10)(1995), 1003--1010 .

\end{thebibliography}
\end{document}